\numberwithin{equation}{section}
\providecommand{\abs}[1]{\left\vert#1\right\vert}
\providecommand{\norm}[1]{\left\Vert#1\right\Vert}
\providecommand{\Rn}[1]{\mathbb{R}^{#1}}
\providecommand{\csubset}{\subset\subset}
\providecommand{\jump}[1]{\left\llbracket #1 \right\rrbracket }
\def\nab{\nabla}
\def\dt{\partial_t}
\def\hal{\frac{1}{2}}
\def\ep{\varepsilon}
\def\tep{\tilde{\ep}}
\def\td{\tilde{\delta}}
\def\rest{\hskip 1pt{\hbox to 10.8pt{\hfill\vrule height 7pt width 0.4pt depth 0pt\hbox{\vrule height 0.4pt
width 7.6pt depth 0pt}\hfill}}}
\def\evalu{\hskip 1pt{\hbox to 2pt{\hfill \vrule height -6pt width 0.4pt depth0pt}}}
\DeclareMathOperator{\diverge}{div}
\DeclareMathOperator{\supp}{supp}
\newtheorem{lem}{Lemma}[section]
\newtheorem{prop}[lem]{Proposition}
\newtheorem{thm}[lem]{Theorem}
\newtheorem{remark}[lem]{Remark}
\title{Linear Rayleigh-Taylor instability for viscous, compressible fluids}
\author{Yan Guo\footnote{Supported in part by NSF grant 0603815}\, and Ian Tice\footnote{Supported by an NSF
Postdoctoral Research Fellowship}\\{\small Brown University, Division of Applied Mathematics}\\
{\small 182 George St., Providence, RI 02912}\\
{\small\tt guoy@dam.brown.edu,  tice@dam.brown.edu} }
\begin{document}

\maketitle

\begin{abstract}
We study the equations obtained from linearizing the compressible Navier-Stokes equations around a steady-state profile with a heavier fluid lying above a lighter fluid along a planar interface, i.e. a Rayleigh-Taylor instability.  We consider the equations with or without surface tension, with the viscosity allowed to depend on the density, and in both periodic and non-periodic settings.  In the presence of viscosity there is no natural variational framework for constructing growing  mode solutions to the linearized problem.  We develop a general method of studying a family of modified variational problems in order to produce maximal growing modes.  Using these growing modes, we construct smooth (when restricted to each fluid domain) solutions to the linear equations that grow exponentially in time in Sobolev spaces.  We then prove an estimate for arbitrary solutions to the linearized equations in terms of the fastest possible growth rate for the growing modes.  In the periodic setting, we show that sufficiently small periodicity avoids instability in the presence of surface tension.
\end{abstract}

\section{Formulation of the problem}

\subsection{Formulation in Eulerian coordinates}

We consider two distinct, immiscible, viscous, compressible, barotropic fluids evolving with or without surface tension within the infinite slab $\Omega:= \Rn{2} \times (-m,\ell)\subset \Rn{3}$ with $m,\ell>0$ for time $t\ge 0$.  The fluids are separated from one another by a moving free boundary surface $\Sigma(t)$ that extends to infinity in every horizontal direction; this surface divides  $\Omega$ into two time-dependent, disjoint, open subsets $\Omega_\pm(t)$ so that $\Omega =  \Omega_+(t) \sqcup  \Omega_-(t) \sqcup \Sigma(t)$ and $\Sigma(t) = \bar{\Omega}_+(t) \cap \bar{\Omega}_-(t)$.  The fluid occupying $\Omega_+(t)$ is called the  ``upper fluid,'' and the second fluid, which occupies $\Omega_-(t)$, is called the ``lower fluid.''  The two fluids are described by their density and velocity functions, which are given for each $t\ge 0$ by 
\begin{equation}
  \rho_\pm(\cdot,t) :\Omega_\pm(t) \rightarrow \Rn{+} \text{ and } u_\pm(\cdot,t) :\Omega_\pm(t) \rightarrow \Rn{3}
\end{equation}
respectively.  We shall assume that at a given time $t\ge 0$ the density and velocity functions have well-defined traces onto $\Sigma(t)$.

For $t>0$ and  $x \in \Omega_\pm(t)$ we require that the fluids satisfy the pair of compressible Navier-Stokes equations:
\begin{equation}\label{eulerian_equations}
 \begin{cases}
  \dt \rho_\pm + \diverge(\rho_\pm u_\pm) = 0 \\
  \rho_\pm (\dt u_\pm + u_\pm \cdot \nab u_\pm) + \diverge S_\pm = -g \rho_\pm e_3,  
 \end{cases}
\end{equation}
where the viscous stress tensor is given by
\begin{equation}
 S_\pm = P_\pm(\rho_\pm)  I - \ep_\pm(\rho_\pm) \left(Du_\pm +Du_\pm^T -\frac{2}{3} \diverge{u_\pm} I \right) -\delta_\pm(\rho_\pm)  (\diverge{u_\pm})I.
\end{equation}
In this expression the superscript $T$ means matrix transposition and $I$ is the $3\times 3$ identity matrix.  The coefficients of viscosity are allowed to vary smoothly with the density, i.e. $\ep_\pm,\delta_\pm \in C^\infty((0,\infty))$, but we assume that the shear viscosity satisfies $\ep_\pm>0$ and that the bulk viscosity satisfies $\delta_\pm \ge 0$.  In the equations we have written $g>0$ for the gravitational constant, $e_3 = (0,0,1)$ for the vertical unit vector, and $-ge_3$ for the gravitational force.   We have assumed a general pressure law of the form $P_\pm =  P_\pm(\rho)>0$ with  $P_\pm\in C^\infty((0,\infty))$ and strictly increasing.  We will also assume that $1/P'_{\pm}\in L^\infty_{loc}((0,\infty))$.  Finally, in order to create the Rayleigh-Taylor instability, i.e. construct a steady-state solution with an upper fluid of greater density at $\Sigma(t)$, we will assume that 
\begin{equation}\label{Z_def}
 Z:=\{z\in(0,\infty) \;\vert\; P_-(z) > P_+(z) \text{ and } P_-(z) \in P_+((0,\infty))   \} \neq \varnothing.
\end{equation}
In particular this requires the pressure laws to be distinct, i.e. $P_- \neq P_+$.  For a physical discussion of the Rayleigh-Taylor instability, we refer to to \cite{kull} and the references therein.

For two viscous fluids meeting at a free boundary with surface tension, the standard assumptions are that the velocity is continuous across the interface and the jump in the normal stress is proportional to the mean curvature of the surface multiplied by the normal to the surface (cf.
\cite{we_la}).  This requires us to enforce the jump conditions  
\begin{equation}\label{eulerian_jumps}
 \begin{cases}
   (u_+)\vert_{\Sigma(t)} -  (u_-)\vert_{\Sigma(t)}  = 0 \\
   (S_+ \nu)\vert_{\Sigma(t)} -  (S_- \nu)\vert_{\Sigma(t)}   = \sigma H \nu,
 \end{cases}
\end{equation}
where we have written the normal vector to $\Sigma(t)$ as $\nu$, and $f\vert_{\Sigma(t)}$ for the trace of a quantity $f$ on $\Sigma(t)$.  Here we take $H$ to be twice the mean curvature of the surface $\Sigma(t)$ and the surface tension to be a constant $\sigma\ge0$.  We will also enforce the no-slip condition at the fixed upper and lower boundaries; we implement this via the boundary condition 
\begin{equation}
 u_-(x_1,x_2,-m,t)   = u_+(x_1,x_2,\ell,t)  =0 \text{ for all } (x_1,x_2)\in \Rn{2}, t \ge 0.
\end{equation}

The motion of the free interface is coupled to the evolution equations for the fluids \eqref{eulerian_equations} by requiring that the surface be advected with the fluids.  More precisely, if $V(x,t)\in \Rn{3}$ denotes the normal velocity of the surface at $x\in \Sigma(t)$, then $V(x,t) = (u(x,t)\cdot \nu(x,t))\nu(x,t)$, where $\nu(x,t)$ is the unit normal to $\Sigma(t)$ at $x$ and $u(x,t)$ is the common trace of $u_\pm(\cdot,t)$ onto $\Sigma(t)$.   These traces agree because of the first jump condition in \eqref{eulerian_jumps}, which also implies that there is no possibility of the fluids slipping past each other along $\Sigma(t).$  

To complete the statement of the problem, we must specify initial conditions.  We give the initial interface $\Sigma(0) = \Sigma_0$, which yields the open sets $\Omega_\pm(0)$ on which we specify the initial data for the density and velocity, $\rho_\pm(0):\Omega_\pm(0) \rightarrow \Rn{+}$ and $u_\pm(0):\Omega_\pm(0) \rightarrow \Rn{3}$, respectively.

It is sometimes desirable to add the additional assumption that solutions are periodic in the horizontal directions.  More precisely, we can require that for $L>0$, the domains $\Omega_\pm(t)$ and the free interface $\Sigma(t)$ are horizontally $2\pi L$ periodic in that
\begin{equation}
 \Omega_\pm(t) = \Omega_\pm(t) + 2\pi L k_1 e_1 + 2\pi L k_2 e_2 \text{ and } \Sigma(t) = \Sigma(t) +2\pi L k_1 e_1 + 2\pi L k_2 e_2
\end{equation}
for any $(k_1,k_2)\in \mathbb{Z}^2$.  Then the density and velocity are periodic on $\Omega_\pm(t)$:
\begin{equation}
 \rho_\pm(x + 2\pi L k_1 e_1 + 2\pi L k_2 e_2,t) = \rho_\pm(x,t) \text{ for all } x\in \Omega_\pm(t),
\end{equation}
\begin{equation}
 u_\pm(x + 2\pi L k_1 e_1 + 2\pi L k_2 e_2,t) = u_\pm(x,t) \text{ for all } x\in \Omega_\pm(t).
\end{equation}

\subsection{Reformulation in Lagrangian coordinates}

The movement of the free boundary and the subsequent change of the domains $\Omega_\pm(t)$ in Eulerian coordinates create numerous mathematical difficulties.  We circumvent these by switching to Lagrangian coordinates so that the interface and the domains stay fixed in time.  To this end we define the fixed Lagrangian domains $\Omega_- = \Rn{2} \times (-m,0)$ and $\Omega_+ = \Rn{2} \times (0,\ell)$ in the non-periodic case, and $\Omega_- = (2\pi L \mathbb{T})^2 \times(-m,0)$ and $\Omega_+ = (2\pi L \mathbb{T})^2 \times(0,\ell)$ in the periodic case.  Here we have written $2\pi L \mathbb{T}$ for the $1-$torus of length $2\pi L$.

We assume that there exist mappings 
\begin{equation}
 \eta^0_\pm:\Omega_\pm \rightarrow \Omega_\pm(0)
\end{equation}
that are continuous across $\{x_3=0\}$, invertible in the non-periodic case, and invertible on their image in the periodic case.  We further require that  $\Sigma_0 = \eta^0_\pm(\{x_3=0\})$, $\eta^0_+(\{x_3=\ell\}) = \{x_3=\ell\}$, and $\eta^0_-(\{x_3=-m \}) = \{x_3=-m\}$; the first condition means that $\Sigma_0$ is parameterized by the either of the mappings $\eta^0_\pm$ restricted to $\{x_3=0\}$ (which one is irrelevant since they are continuous across the interface), and the latter two conditions mean that $\eta_\pm^0$ map the fixed upper and lower boundaries into themselves.  

Define the flow maps, $\eta_\pm$, as the solutions to
\begin{equation}
 \begin{cases}
  \dt \eta_\pm(x,t) = u_\pm(\eta_\pm(x,t),t) \\
  \eta(x,0) = \eta_\pm^0(x).
 \end{cases}
\end{equation}
We think of the Eulerian coordinates as $(y,t)$ with $y=\eta(x,t)$, whereas we think of Lagrangian coordinates as the fixed $(x,t)\in \Omega \times \Rn{+}$. In order to switch back and forth from Lagrangian to Eulerian coordinates we assume that $\eta_\pm(\cdot,t)$ are invertible in the non-periodic case and invertible on their images in the periodic case.  In the non-periodic case, this implies that $\Omega_\pm(t) = \eta_\pm(\Omega_\pm,t)$, and since $u_\pm$ and $\eta_\pm^0$ are all continuous across $\{x_3=0\}$, we have $\Sigma(t) = \eta_\pm(\{x_3=0\},t)$.  In other words, the Eulerian domains of upper and lower fluids are the image of $\Omega_\pm$ under the mappings $\eta_\pm$ and  the free interface is the image of $\{x_3=0\}$ under the mapping $\eta_\pm(\cdot,t)$. In the periodic case,
\begin{equation}
 \Omega_\pm(t) = \bigsqcup_{(k_1,k_2)\in \mathbb{Z}^2} \left( \eta_\pm(\Omega_\pm,t) + 2\pi L k_1 e_1 + 2\pi L k_2 e_2 \right), \text{ and }
\end{equation}
\begin{equation}
 \Sigma(t) = \bigsqcup_{(k_1,k_2)\in \mathbb{Z}^2} \left( \eta_\pm(\{x_3=0\} ,t) + 2\pi L k_1 e_1 + 2\pi L k_2 e_2 \right).
\end{equation}

We define the Lagrangian unknowns
\begin{equation}
 \begin{cases}
  v_\pm(x,t) = u_\pm(\eta_\pm(x,t),t) \\
  q_\pm(x,t) = \rho_\pm(\eta_\pm(x,t),t), 
 \end{cases}
\end{equation}
which are defined  for $(x,t) \in \Omega_\pm \times \Rn{+}$.  Since the domains $\Omega_\pm$ are now fixed, we henceforth consolidate notation by writing $\eta, v, q$ to refer to $\eta_\pm, v_\pm, q_\pm$ except when necessary to distinguish the two; when we write an equation for $\eta, v, q$ we assume that the equation holds with the subscripts added on the domains $\Omega_\pm$.  Define the matrix $A$ via $A^T= (D \eta)^{-1}$, where $D$ is the derivative in $x$ coordinates and the superscript $T$ denotes matrix transposition.  Then in Lagrangian coordinates the evolution equations for $v,q,\eta$ are, writing $\partial_j = \partial / \partial x_j$,
\begin{equation}\label{lagrangian_equations}
 \begin{cases}
  \dt \eta_i = v_i \\
  \dt q + q A_{ij}\partial_j v_i =0 \\
  q \dt v_i + A_{jk}\partial_k T_{ij}  = -g q A_{ij}\partial_j \eta_3,
 \end{cases}
\end{equation}
where the viscous stress tensor in Lagrangian coordinates, $T$, is given by
\begin{equation}
 T_{ij} = P(q) I_{ij} - \ep(q) \left(A_{jk} \partial_k v_i + A_{ik} \partial_k v_j  - \frac{2}{3} (A_{lk} \partial_k v_l) I_{ij} \right) -\delta(q)(A_{lk} \partial_k v_l) I_{ij}.
\end{equation}
Here we have written $I_{ij}$ for $i,j$ component of the $3\times 3$ identity matrix $I$ and we have employed the Einstein convention of summing over repeated indices.

To write the jump conditions, for a quantity $f=f_\pm$, we define the interfacial jump as
\begin{equation}
 \jump{f} := f_+ \vert_{\{x_3=0\}} - f_- \vert_{\{x_3=0\}}. 
\end{equation}
The jump conditions across the interface are
\begin{equation}\label{lagrangian_jumps}
 \begin{cases}
  \jump{v} = 0 \\
  \jump{T n}  = \sigma H n
 \end{cases}
\end{equation}
where we have written 
\begin{equation}
n := \left. \frac{\partial_1 \eta \times \partial_2 \eta}{\abs{\partial_1 \eta \times \partial_2 \eta}} \right\vert_{\{x_3=0\}}
\end{equation}
for the unit normal to the surface $\Sigma(t) = \eta(\{x_3=0\},t)$ and $H$ for twice the mean curvature of $\Sigma(t)$.  Since $\Sigma(t)$ is parameterized by $\eta$, we may employ the standard formula  for the mean curvature of a parameterized surface to write
\begin{equation}
 H = \left( \frac{\abs{\partial_1 \eta}^2 \partial_{2}^2 \eta - 2 (\partial_1 \eta \cdot \partial_2 \eta) \partial_1 \partial_2 \eta +  \abs{\partial_2 \eta}^2 \partial_1^2 \eta }{\abs{\partial_1 \eta}^2 \abs{\partial_2 \eta}^2 - \abs{\partial_1 \eta \cdot \partial_2 \eta}^2}   \right) \cdot n.
\end{equation}
Finally, we require the no-slip boundary condition 
\begin{equation}
 v_-(x_1,x_2,-m,t) = v_+(x_1,x_2,\ell,t) = 0.
\end{equation}
Note that the jump and boundary conditions are the same in the periodic and non-periodic cases.

\subsection{Steady-state solution}

We seek a steady-state solution with $v=0, \eta = Id$, $q(x,t) = \rho_0(x_3)$ with the interface given by $\eta(\{x_3=0\})=\{x_3=0\}$ for all $t\ge 0$.  Then $H=0$, $n=e_3$,  and $A=I$ for all $t\ge 0$, and the equations reduce to the ODE
\begin{equation}\label{steady_state}
 \frac{d(P(\rho_0))}{dx_3} = -g\rho_0  
\end{equation}
subject to the jump condition
\begin{equation}
 \jump{ P(\rho_0)} =0.
\end{equation}

To solve this we introduce the enthalpy function defined by 
\begin{equation}
 h_\pm(z) = \int_1^z \frac{P_\pm'(r)}{r}dr.
\end{equation}
The properties of $P_\pm$ guarantee that $h_\pm \in C^\infty((0,\infty))$ are both strictly increasing, and hence invertible on their images. The solution to the ODE is then given by
\begin{equation}
 \rho_0(x) = \begin{cases}
h_-^{-1}(h_-(\rho^-_0) - g x_3 ), & -m < x_3 < 0 \\
h_+^{-1}(h_+(\rho^+_0) - g x_3 ), & 0 < x_3 < \ell.
             \end{cases}
\end{equation}
where $\rho^-_0 >0$ is a free parameter satisfying $P_-(\rho_0^-) \in P_+((0,\infty))$, which allows the jump condition to be satisfied by choosing $\rho_0^+>0$ according to
\begin{equation}
 \rho^+_0 = P_+^{-1}(P_-(\rho^-_0)).
\end{equation}
For $\rho_0$ to be well-defined on $\Omega$, we will henceforth assume that $\ell,m>0$ are chosen so that
\begin{equation}
(h_-(\rho^-_0) + g m) \in h_-((0,\infty)) \text{ and } (h_+(\rho^+_0) - g \ell) \in h_+((0,\infty)).
\end{equation}
Note that $\rho_0$ is bounded above and below by positive constants on $(-m,\ell)$ and that $\rho_0$ is smooth when restricted to $(-m,0)$ or $(0,\ell).$

Since we are interested in Rayleigh-Taylor instability, we want the fluid to be denser above the interface, i.e. $\rho^+_0 > \rho^-_0$.  This requires us to choose $\rho^-_0$ so that
\begin{equation}
P_+^{-1}(P_-(\rho^-_0)) > \rho^-_0 \Leftrightarrow P_-(\rho^-_0) > P_+(\rho^-_0).
\end{equation}
The latter condition is satisfied for any $\rho^-_0 \in Z$, where $Z$ was defined by \eqref{Z_def}; we assume $\rho^-_0$ takes any such value.  Then
\begin{equation}
 \jump{\rho_0} = \rho_0^+ - \rho_0^- >0.
\end{equation}

For the sake of clarity, we include an example of the solution, $\rho_0$, when the pressure laws correspond to polytropic gas laws, i.e. $P_\pm(\rho) = K_\pm \rho^{\gamma_\pm}$ for $K_\pm> 0, \gamma_\pm \ge 1$.  The solution is then given by
\begin{equation}
 \rho_0(x_3) = \begin{cases}
\left((\rho_0^-)^{\gamma_{-}-1} - \frac{g(\gamma_{-}-1)}{K_{-}\gamma_{-}} x_3   \right)^{1/(\gamma_{-}-1)} & x_3 < 0 \\
\left((\rho_0^+)^{\gamma_{+}-1} - \frac{g(\gamma_{+}-1)}{K_{+}\gamma_{+}} x_3   \right)^{1/(\gamma_{+}-1)} &
0<x_3 \le \frac{K_{+}\gamma_{+}}{g(\gamma_{+}-1)}  (\rho_0^+)^{\gamma_{+}-1}     \\
0 & x_3 \ge \frac{K_{+}\gamma_{+}}{g(\gamma_{+}-1)}  (\rho_0^+)^{\gamma_{+}-1} 
\end{cases}
\end{equation}
with modification to solutions $\rho_0(x_3) = \rho_0^{\pm} \exp(-gx_3/K_{\pm})$ when either $\gamma_+$ or $\gamma_-$ is $1$.  The jump condition requires that 
\begin{equation}
 \rho_0^+ = \left(\frac{K_-}{K_+}\right)^{1/\gamma_+} (\rho_0^-)^{\gamma_-/\gamma_+}.
\end{equation}
For a polytropic gas law, the condition that $\rho_0^+>\rho_0^-$ is equivalent to
\begin{equation}
 \left(\frac{K_-}{K_+}\right)^{1/\gamma_+} (\rho_0^-)^{\gamma_-/\gamma_+} > \rho_0^- \Leftrightarrow (\rho_0^-)^{\gamma_- - \gamma_+} > \frac{K_+}{K_-}.
\end{equation}
If $\gamma_+ = \gamma_-$ this requires $K_- > K_+$ and any choice of $\rho_0^->0$.  If $\gamma_+ \neq \gamma_-$ then $K_-, K_+>0$ can be arbitrary, but we must require that $\rho_0^->0$ satisfies
\begin{equation}
\begin{cases}
 \rho_0^- > \left( \frac{K_+}{K_-}\right)^{1/(\gamma_- - \gamma_+)} & \text{if } \gamma_- > \gamma_+\\
 \rho_0^- < \left( \frac{K_-}{K_+}\right)^{1/(\gamma_+ - \gamma_-)} & \text{if } \gamma_+ > \gamma_-.
\end{cases}
\end{equation}
In either case, to avoid the vanishing of $\rho_0$, $\ell$ is chosen so that
\begin{equation}
0< \ell < \frac{K_{+}\gamma_{+}}{g(\gamma_{+}-1)}  (\rho_0^+)^{\gamma_{+}-1},
\end{equation}
but the parameter $m>0$ may be chosen arbitrarily.

\subsection{Linearization around the steady-state}
We now linearize the equations \eqref{lagrangian_equations} around the steady-state solution $v=0$, $\eta = Id$, $q=\rho_0$.  The resulting linearized equations are, writing $\eta, v, q$ for the unknowns,  
\begin{equation}\label{linearized_1}
 \begin{cases}
  \dt \eta  =  v \\
  \dt q + \rho_0 \diverge{ v} =0 
   \end{cases}
\end{equation}
and 
\begin{multline}\label{linearized_2}
\rho_0 \dt v + \nab(P'(\rho_0)q)  + g q e_3 + g \rho_0 \nab \eta_3 \\
= \diverge\left( \ep_0 \left( Dv + Dv^T  -\frac{2}{3} (\diverge{v}) I \right) + \delta_0 (\diverge{v})I \right),
\end{multline}
where $\ep_0 = \ep(\rho_0)$ and $\delta_0 = \delta(\rho_0)$.

The jump conditions linearize to $\jump{v}  =0$ and 
\begin{equation}\label{linearized_jump}
\jump{  P'(\rho_0) q I      - \ep_0 (Dv + Dv^T) - (\delta_0-2\ep_0/3) \diverge{v} I }  e_3  = \sigma \Delta_{x_1,x_2} \eta_3 e_3,
\end{equation}
while the boundary conditions linearize to  $v_-(x_1,x_2,-m,t) = v_+(x_1,x_2,\ell,t)  =0.$
We assume that initial data are provided as $\eta(0) = \eta_0$, $v(0) = v_0$, $q(0) = q_0$ that satisfy the jump and boundary conditions in addition to the assumption that 
$\jump{\eta_0}  =0,$  which implies that $\eta(t)$ is continuous across $\{x_3=0\}$ for all $t\ge 0$.

\subsection{Growing mode ansatz}

We will look for a growing normal mode solution to \eqref{linearized_1}--\eqref{linearized_2} by first assuming an ansatz
\begin{equation}
 v(x,t) = w(x) e^{\lambda t}, q(x,t)= \tilde{q}(x) e^{\lambda t}, \eta(x,t) = \tilde{\eta}(x) e^{\lambda t}
\end{equation}
for some $\lambda > 0$, which is the same in the upper and lower fluids. Plugging the ansatz into \eqref{linearized_1}--\eqref{linearized_2}, we may solve the first and second equations for $\tilde{\eta}$ and $\tilde{q}$ in terms of $v$.  Doing so and eliminating them from the third equation, we arrive at the time-invariant equation
\begin{multline}\label{linear_timeless_0}
 \lambda^2 \rho_0  w  - \nab (P'(\rho_0) \rho_0  \diverge{w} )    
- g\rho_0 \diverge{w}   e_3  + g\rho_0 \nab w_3 \\
= \diverge\left( \lambda \ep_0 \left( Dw + Dw^T  -\frac{2}{3} (\diverge{w}) I \right) + \lambda \delta_0 (\diverge{w})I \right).
\end{multline}
This is coupled to the jump conditions $\jump{w}=0$ and
\begin{equation}
\jump{ (\lambda \delta_0 - 2\lambda \ep_0/3 +  P'(\rho_0) \rho_0)\diverge{w} I + \lambda\ep_0 (Dw + Dw^T) } e_3   = - \sigma  \Delta_{x_1,x_2} w_3 e_3,
\end{equation}
and the boundary conditions $w_-(x_1,x_2,-m) = w_+(x_1,x_2,\ell)  =0.$  Notice that the first jump condition implies that the assumptions on $\eta(0) = \tilde{\eta}(0) = w(0)/\lambda$ mentioned in the last section are satisfied.

Since the coefficients of the linear problem \eqref{linear_timeless_0} only depend on the $x_3$ variable, we are free to make the further structural assumption that the $x_1,x_2$ dependence of $w$ is given as a Fourier mode $e^{i x' \cdot \xi}$, where $x' \cdot \xi = x_1 \xi_1 + x_2 \xi_2$ for $\xi \in \Rn{2}$ in the non-periodic case and $\xi \in L^{-1} \mathbb{Z} \times L^{-1} \mathbb{Z}$ in the $2\pi L$ periodic case.  Together with the growing mode ansatz, this constitutes a ``normal mode'' ansatz, which is standard in fluid stability analysis \cite{chandra}.  We define the new unknowns $\varphi,\theta,\psi:(-m,\ell) \rightarrow \Rn{}$ according to
\begin{equation}
 w_1(x)  =  -i \varphi(x_3) e^{i x' \cdot \xi},  w_2(x) =    - i \theta(x_3) e^{i x' \cdot \xi}, \text{ and } w_3(x) = \psi(x_3)e^{i x' \cdot \xi}.
\end{equation}

The utility of the new unknowns is seen in the pair of equations
\begin{equation}
 \diverge{w} = (\xi_1 \varphi + \xi_2 \theta + w_3')e^{i x'\cdot \xi}
\end{equation}
and
\begin{equation}
 Dw + Dw^T = \begin{pmatrix}
       2\xi_1 \varphi & \xi_1 \theta + \xi_2 \varphi & i(\xi_1 \psi - \varphi' ) \\
       \xi_1 \theta + \xi_2 \varphi & 2\xi_2 \theta & i(\xi_2 \psi - \theta')  \\
       i(\xi_1 \psi - \varphi' ) & i(\xi_2 \psi - \theta') & 2\psi'
      \end{pmatrix} e^{i x'\cdot \xi}
\end{equation}
For each fixed $\xi$, and for the new unknowns $\varphi(x_3), \theta(x_3), \psi(x_3)$, and $\lambda$ we arrive at the following system of ODEs (here $' = d/dx_3$).
\begin{multline}\label{w_1_equation}
-\left(\lambda \ep_0 \varphi'\right)' + \left[ \lambda^2 \rho_0  + \lambda \ep_0 \abs{\xi}^2 + \xi_1^2 \left( \lambda \delta_0 + \lambda \ep_0/3+ P'(\rho_0) \rho_0 \right)   \right] \varphi \\ = - \xi_1 \left[ \left(\lambda \delta_0 + \lambda \ep_0/3+ P'(\rho_0)\rho_0\right)\psi' +(\lambda \ep_0'  - g\rho_0) \psi\right] 
 -\xi_1 \xi_2  \left[ \lambda \delta_0 + \lambda \ep_0/3 + P'(\rho_0) \rho_0  \right] \theta
\end{multline}
\begin{multline}\label{w_2_equation}
-(\lambda \ep_0 \theta')' + \left[ \lambda^2 \rho_0  + \lambda \ep_0 \abs{\xi}^2 + \xi_2^2 \left( \lambda \delta_0 + \lambda \ep_0/3 +  P'(\rho_0) \rho_0 \right)   \right] \theta \\ = - \xi_2\left[ \left(\lambda \delta_0 + \lambda \ep_0/3+ P'(\rho_0)\rho_0\right)\psi' +(\lambda \ep_0' - g\rho_0) \psi\right] 
 -\xi_1 \xi_2  \left[ \lambda \delta_0 + \lambda \ep_0/3 +  P'(\rho_0) \rho_0  \right] \varphi
\end{multline}
\begin{multline}\label{w_3_equation}
 -\left[\left(  4\lambda \ep_0/3 + \lambda \delta_0 +  P'(\rho_0) \rho_0   \right) \psi'\right]' 
+ \left( \lambda^2 \rho_0 + \lambda \ep_0 \abs{\xi}^2  \right) \psi \\
=  \left[  \left( \lambda \delta_0 + \lambda \ep_0/3 +   P'(\rho_0) \rho_0 \right) \left( \xi_1 \varphi + \xi_2 \theta \right)  \right]' + (g \rho_0 - \lambda \ep_0')(\xi_1 \varphi + \xi_2 \theta )
\end{multline}

The first jump condition yields jump conditions for the new unknowns:
\begin{equation}
 \jump{\varphi} =\jump{\theta} =\jump{\psi} = 0.
\end{equation}
The second jump condition becomes
\begin{equation}
\jump{ \left(\lambda \delta_0 - 2\lambda \ep_0 /3 +   P'(\rho_0) \rho_0 \right)(\xi_1 \varphi +  \xi_2 \theta + \psi') e_3 + \lambda \ep_0 \begin{pmatrix}
   i (\xi_1 \psi - \varphi') \\
  i (\xi_2 \psi-\theta') \\
  2 \psi'
\end{pmatrix}
 }  
    = \sigma \abs{\xi}^2 \psi e_3,
\end{equation}
 which implies that 
\begin{equation}
 \jump{\lambda \ep_0 (\varphi'-\xi_1  \psi)} = \jump{\lambda \ep_0 (\theta'-\xi_2 \psi)} =0 
\end{equation}
and that 
\begin{equation}
\jump{(\lambda \delta_0+ \lambda \ep_0/3 + P'(\rho_0)\rho_0) (\psi' + \xi_1 \varphi + \xi_2 \theta )}
+  \jump{\lambda \ep_0 \left(\psi' - \xi_1 \varphi - \xi_2 \theta    \right)  }
  =\sigma \abs{\xi}^2 \psi.
\end{equation}
The boundary conditions 
\begin{equation}
 \varphi(-m) =  \varphi(\ell) = \theta(-m) =  \theta(\ell)  =\psi(-m) =  \psi(\ell)=0
\end{equation}
must also hold.

We can reduce the complexity of the problem by removing the component $\theta$.  To do this, note that if $\varphi,\theta,\psi$ solve the equations \eqref{w_1_equation}--\eqref{w_3_equation} for $\xi\in \Rn{2}$ and $\lambda$, then for any rotation operator $R\in SO(2)$,  $(\tilde{\varphi},\tilde{\theta}) := R (\varphi,\theta)$ solve the same equations for $\tilde{\xi} := R\xi$ with $\psi, \lambda$ unchanged.  So, by choosing an appropriate rotation, we may assume without loss of generality that $\xi_2=0$ and $\xi_1= \abs{\xi} \ge 0$.  In this setting $\theta$ solves   
\begin{equation}
\begin{cases}
 -(\lambda \ep_0 \theta')' + (\lambda^2 \rho_0 + \lambda \ep_0 \abs{\xi}^2) \theta =0 \\
 \theta(-m) = \theta(\ell) = 0 \\
 \jump{\theta} = \jump{\lambda \ep_0 \theta'} =0.
\end{cases}
\end{equation}
Multiplying this equation by $\theta$, integrating over $(-m,\ell)$, integrating by parts, and using the jump conditions then yields
\begin{equation}
 \int_{-m}^\ell \lambda \ep_0 \abs{\theta'}^2 + (\lambda^2 \rho_0 + \lambda \ep_0 \abs{\xi}^2) \theta^2 =0,
\end{equation}
which implies that $\theta=0$ since we assume $\lambda >0$.  This reduces to the pair of equations for $\varphi,\psi$
\begin{multline}\label{coupled_1}
-\lambda^2 \rho_0 \varphi = -(\lambda \ep_0 \varphi')' + \abs{\xi}^2  \left(4 \lambda \ep_0/3 +   \lambda \delta_0 +  P'(\rho_0) \rho_0 \right) \varphi \\
+ \abs{\xi} \left[ \left(\lambda \delta_0 +\lambda \ep_0/3 + P'(\rho_0)\rho_0\right)\psi' +(\lambda \ep_0' - g\rho_0) \psi \right]  
\end{multline}
\begin{multline}\label{coupled_2}
 -\lambda^2 \rho_0 \psi =  -\left[ \left(  4\lambda \ep_0/3 + \lambda \delta_0 +  P'(\rho_0) \rho_0   \right) \psi' \right]' +   \lambda \ep_0 \abs{\xi}^2  \psi \\
- \abs{\xi} \left[ \left(\left( \lambda \delta_0 +\lambda \ep_0/3 +  P'(\rho_0) \rho_0    \right) \varphi \right)'  + (g \rho_0-\lambda \ep_0' )\varphi \right]  
\end{multline}
along with the jump conditions
\begin{equation}\label{coupled_conds_1}
\jump{ \varphi } = \jump{\psi }= \jump{\lambda \ep_0 (\varphi'-\abs{\xi} \psi)} = 0, 
\end{equation}
\begin{equation}
\jump{(\lambda \delta_0+ \lambda \ep_0/3 + P'(\rho_0)\rho_0) (\psi' + \abs{\xi} \varphi  )}
+  \jump{\lambda \ep_0 \left(\psi' - \abs{\xi} \varphi     \right)  }
  =\sigma \abs{\xi}^2 \psi.
\end{equation}
and the boundary conditions
\begin{equation}\label{coupled_conds_2}
 \varphi(-m) =  \varphi(\ell) =  \psi(-m) =  \psi(\ell)=0.
\end{equation}

\section{Main results and discussion}

In the absence of viscosity ($\ep = \delta =0$ with modified jump and boundary conditions) and for a fixed spatial frequency $\xi\neq 0$, the equations \eqref{coupled_1}--\eqref{coupled_2} can be viewed as an eigenvalue problem with eigenvalue $-\lambda^2$.   Such a problem has a natural variational structure that allows for construction of solutions via the direct methods and for a variational characterization of the eigenvalue via
\begin{equation}
 -\lambda^2 = \inf \frac{E(\varphi,\psi)}{J(\varphi,\psi)},
\end{equation}
where
\begin{equation}
 E(\varphi,\psi)= \hal \int_{-m}^\ell  P'(\rho_0)\rho_0 (\psi' + \abs{\xi} \varphi)^2    
- 2  g\rho_0  \abs{\xi}  \psi \varphi  
\end{equation}
and 
\begin{equation}
 J(\varphi,\psi) = \hal \int_{-m}^\ell \rho_0  (\varphi^2 + \psi^2).
\end{equation}
This variational structure was essential to our analysis in \cite{guo_tice}, where we showed that $\lambda \rightarrow \infty$ as $\abs{\xi} \rightarrow \infty$, which led to ill-posedness results for both the inviscid linearized problem and the inviscid non-linear problem (equations  \eqref{lagrangian_equations} with $\ep=\delta=0$).

Unfortunately, when viscosity is present the natural variational structure breaks down since $\lambda$ appears quadratically as a multiplier of $\rho_0$ and  linearly as a multiplier of $\ep_0 $ and $\delta_0$ in \eqref{coupled_1}--\eqref{coupled_2}.  This  presents no obstacle to a stability analysis once a solution is  known \cite{chandra} since the equations imply a quadratic relationship between $\lambda$ and various integrals of the solution, which can be solved for $\lambda$ to determine the sign of $\Re{\lambda}$.  On the other hand, the appearance of $\lambda$ both quadratically and linearly  eliminates the capacity to use  constrained minimization techniques to produce solutions to the equations.

In order to circumvent this problem and restore the ability to use variational methods, we artificially remove the linear dependence on $\lambda$.  To this end, we define the modified viscosities $\tep = s \ep_0$ and $\td = s \delta_0$, where   $s >0$ is an arbitrary parameter.  We then introduce a family ($s>0$) of modified problems given by  
\begin{multline}\label{s_coupled_1}
-\lambda^2 \rho_0 \varphi = -(\tep \varphi')' + \abs{\xi}^2  \left(4 \tep/3 +   \td +  P'(\rho_0) \rho_0 \right) \varphi \\
+ \abs{\xi} \left[ \left(\td +\tep/3 + P'(\rho_0)\rho_0\right)\psi' +(\tep' - g\rho_0) \psi \right] 
\end{multline}
\begin{multline}\label{s_coupled_2}
 -\lambda^2 \rho_0 \psi =  -\left[ \left(  4 \tep/3 + \td +  P'(\rho_0) \rho_0   \right) \psi' \right]' +   \tep \abs{\xi}^2  \psi \\
- \abs{\xi} \left[ \left(\left( \td +\lambda \ep_0/3 +  P'(\rho_0) \rho_0    \right) \varphi \right)'  + (g \rho_0- \tep' )\varphi \right]  
\end{multline}
along with the jump conditions
\begin{equation}\label{s_coupled_conds_1}
\jump{ \varphi } = \jump{\psi }= \jump{\tep (\varphi'-\abs{\xi} \psi)} = 0, 
\end{equation}
\begin{equation}
\jump{(\td + \tep/3 + P'(\rho_0)\rho_0) (\psi' + \abs{\xi} \varphi  )}
+  \jump{\tep \left(\psi' - \abs{\xi} \varphi     \right)  }
  =\sigma \abs{\xi}^2 \psi.
\end{equation}
and the boundary conditions
\begin{equation}\label{s_coupled_conds_2}
 \varphi(-m) =  \varphi(\ell) =  \psi(-m) =  \psi(\ell)=0.
\end{equation}
A solution to the modified problem with $\lambda = s$ corresponds to a solution to the original problem.

Modifying the problem in this way restores the variational structure and allows us to apply a constrained minimization to the viscous analog of the energy $E$ defined above (see \eqref{E_def}) to find a solution to \eqref{s_coupled_1}--\eqref{s_coupled_2} with $\lambda = \lambda(\abs{\xi},s)>0$ when $s>0$ is sufficiently small and precisely when  
\begin{equation}
 0 < \abs{\xi} \le \abs{\xi}_c : = \sqrt{\frac{g\jump{\rho_0}}{\sigma}}.
\end{equation}
We then further exploit the variational structure to show that $\lambda$ is a continuous function and is strictly increasing in $s$.  Using this, we show in Theorem \ref{inversion} that the parameter $s$ can be uniquely chosen so that 
\begin{equation}
s = \lambda(\abs{\xi},s), 
\end{equation}
which implies that we have found a solution to the original problem \eqref{coupled_1}--\eqref{coupled_2}.  This choice of $s$ allows us to think of $\lambda = \lambda(\abs{\xi})$, and gives rise to a solution to the system of equations  \eqref{w_1_equation}--\eqref{w_3_equation} as well.

\begin{thm}[Proved in Section \ref{mod_section}]\label{w_soln_2}
For $\xi\in\Rn{2}$ so that $0 < \abs{\xi}^2 < g\jump{\rho_0}/\sigma$ there exists a solution $\varphi = \varphi(\xi,x_3)$, $\theta = \theta(\xi,x_3)$, $\psi = \psi(\xi,x_3)$, and $\lambda = \lambda(\abs{\xi})>0$  to \eqref{w_1_equation}--\eqref{w_3_equation} satisfying the appropriate jump and boundary conditions so that  $\psi(\xi,0)\neq 0$.
The solutions are smooth when restricted to $(-m,0)$ or $(0,\ell)$, and they are equivariant in $\xi$ in the sense that if $R\in SO(2)$ is a rotation operator, then 
\begin{equation}\label{w_s_0}
\begin{pmatrix}
\varphi(R \xi,x_3) \\ \theta(R \xi,x_3) \\ \psi(R \xi,x_3) 
\end{pmatrix}
= 
\begin{pmatrix}
R_{11} & R_{12} & 0 \\ 
R_{21} & R_{22} & 0 \\ 
0      & 0      & 1
\end{pmatrix}
\begin{pmatrix}
\varphi(\xi,x_3) \\ \theta(\xi,x_3) \\ \psi(\xi,x_3) 
\end{pmatrix}.
\end{equation}
\end{thm}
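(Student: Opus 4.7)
The plan is to follow the modified-variational strategy sketched in the preceding text. Fix $\xi\in\Rn{2}$ with $0<\abs{\xi}^2<g\jump{\rho_0}/\sigma$. Using the rotational invariance of \eqref{w_1_equation}--\eqref{w_3_equation} already exploited in the text, I would reduce, without loss of generality, to the case $\xi_1=\abs{\xi}$, $\xi_2=0$, in which the $\theta$-equation decouples and the integration-by-parts argument in the text forces $\theta\equiv 0$. It therefore suffices to produce $(\varphi,\psi,\lambda)$ solving the planar coupled system \eqref{coupled_1}--\eqref{coupled_2} together with \eqref{coupled_conds_1}--\eqref{coupled_conds_2}; the full solution for arbitrary $\xi$ is then obtained by rotating back, which automatically yields \eqref{w_s_0} (the $\theta$ component arising precisely from the $R$-mixing of the first two horizontal components).

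Next, for each fixed $s>0$, I would treat the $s$-modified system \eqref{s_coupled_1}--\eqref{s_coupled_2} as a genuine linear eigenvalue problem in $-\lambda^2$ and solve it by constrained minimization of the viscous analogue of $E$: a quadratic form $E_s$ in $(\varphi,\psi)$ that includes the dissipation $\tep\abs{\varphi'}^2$ and $(4\tep/3+\td+P'(\rho_0)\rho_0)\abs{\psi'}^2$, the appropriate $\abs{\xi}^2$ terms, the gravitational cross term $-2g\rho_0\abs{\xi}\psi\varphi$, and the surface-tension boundary contribution $\tfrac{\sigma}{2}\abs{\xi}^2\psi(0)^2$, all minimized subject to the normalization $J(\varphi,\psi)=1/2$. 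The direct method together with one-dimensional Rellich compactness produces a minimizer whose Euler--Lagrange system is exactly \eqref{s_coupled_1}--\eqref{s_coupled_2} with jump and boundary conditions \eqref{s_coupled_conds_1}--\eqref{s_coupled_conds_2}. The key point is the sign: for $0<\abs{\xi}<\abs{\xi}_c$, a test pair with non-trivial trace at $x_3=0$ exploits the hypothesis $\abs{\xi}^2<g\jump{\rho_0}/\sigma$ to force the gravity cross term to dominate the surface-tension and viscous contributions, yielding $\inf E_s<0$ and hence a positive Lagrange multiplier $\lambda^2(\abs{\xi},s)>0$. Moreover $\psi_s(0)\neq 0$, for otherwise the system would decouple into the two slabs, on each of which $E_s$ is non-negative, contradicting $\inf E_s<0$.

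The heart of the argument, and the step I expect to be the main obstacle, is then to analyze $\lambda(\abs{\xi},s)$ as a function of $s$ and invoke \textbf{Theorem \ref{inversion}} to find a unique $s_\ast>0$ with $s_\ast=\lambda(\abs{\xi},s_\ast)$; setting $\lambda=s_\ast$ then converts the modified problem into the original one and supplies a solution of \eqref{coupled_1}--\eqref{coupled_2}. This requires (i) continuity in $s$, via a standard perturbation argument for the min-max principle; (ii) strict monotonicity of $\lambda$ in $s$, obtained by comparing Rayleigh quotients for $s_1<s_2$ and noting that increasing $s$ strictly raises the viscous dissipation terms in $E_s$ while leaving the gravity and pressure terms unchanged, so that $-\lambda^2$ is strictly increasing in $s$; and (iii) the limiting behaviour as $s\to 0^+$ and $s\to\infty$ needed to guarantee that the line $s\mapsto s$ intersects the graph of $s\mapsto\lambda(\abs{\xi},s)$ at a unique positive point. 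Establishing these properties with enough uniformity to close the inversion is the technical crux.

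Finally, I would reconstruct the full solution to \eqref{w_1_equation}--\eqref{w_3_equation} for arbitrary $\xi\neq 0$ with $\abs{\xi}<\abs{\xi}_c$ by applying the rotation $R\in SO(2)$ sending $e_1$ to $\xi/\abs{\xi}$; covariance of the system under this action, as noted in the text, both produces a solution for the original $\xi$ and yields \eqref{w_s_0}. Non-vanishing of $\psi(\xi,0)$ is inherited directly from the minimizer, and the asserted smoothness on $(-m,0)$ and on $(0,\ell)$ follows from classical ODE bootstrapping applied to \eqref{w_1_equation}--\eqref{w_3_equation}, using that $\rho_0$, $\ep_0$ and $\delta_0$ are smooth on each slab and that the leading-order coefficients in the reduced system are uniformly positive there.
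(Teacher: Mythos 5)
Your proposal follows the paper's route essentially step for step: the rotation reduction to $\xi=(\abs{\xi},0)$ with $\theta\equiv 0$, the family of $s$-modified problems solved by constrained minimization (direct method, compactness, Euler--Lagrange system with the jump and boundary conditions), negativity of the infimum via a test pair with nonvanishing trace under $\abs{\xi}^2<g\jump{\rho_0}/\sigma$, the resulting $\psi(0)\neq 0$, the continuity/monotonicity-in-$s$ fixed-point argument of Theorem \ref{inversion} to get $s=\lambda(\abs{\xi},s)$, and rotating back to obtain \eqref{w_s_0}, with smoothness by bootstrapping. The only slip is your implicit claim that $\inf E_s<0$ for every fixed $s>0$: since $\mu(s)\ge -g\abs{\xi}+sC_2$ (see \eqref{e_l_00}), the infimum is negative only for $s$ in a finite interval $(0,s_*)$, so in your step (iii) the relevant limit is $s\to s_*$ (where $\lambda\to 0$, i.e. $s/\lambda\to\infty$) rather than $s\to\infty$; this is precisely what the paper's Theorem \ref{inversion} handles, so your plan remains sound.
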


Without surface tension ($\sigma=0$) it is possible to construct a solution to \eqref{s_coupled_1}--\eqref{s_coupled_2} with $\lambda>0$ for any $\xi \neq 0$, but with surface tension ($\sigma>0$) there is a critical frequency $\abs{\xi}_c = \sqrt{g\jump{\rho_0}/\sigma}$ for which no solution with $\lambda >0$ is available if $\abs{\xi}\ge \abs{\xi}_c$.  In the non-periodic case, we capture a continuum $\abs{\xi} \in (0,\abs{\xi}_c)$ of growing mode solutions, but in the $2\pi L$ periodic case we only find finitely many.  Indeed, if 
\begin{equation}\label{L_large}
 \sqrt{\frac{\sigma}{g\jump{\rho_0}}} < L,
\end{equation}
then a positive but finite number of spatial frequencies  $\xi \in (L^{-1}\mathbb{Z})^2$ satisfy $\abs{\xi} < \abs{\xi}_c$.  On the other hand, if 
\begin{equation}\label{L_small}
 L \le \sqrt{\frac{\sigma}{g\jump{\rho_0}}},
\end{equation}
then our method fails to construct any growing mode solutions at all.  

It is important to know the behavior of $\lambda(\abs{\xi})$ as $\abs{\xi}$ varies within $0< \abs{\xi} < \abs{\xi}_c$.  We show in Proposition \ref{lambda_cont} that $\lambda(\abs{\xi})$ is continuous and satisfies 
\begin{equation}
 \lim_{\abs{\xi} \rightarrow 0} \lambda(\abs{\xi}) = \lim_{\abs{\xi} \rightarrow \abs{\xi}_c} \lambda(\abs{\xi}) =0.
\end{equation}
In the non-periodic case, this implies that there is a largest growth rate
\begin{equation}\label{max_def}
 0< \Lambda := \max_{0 \le \abs{\xi} \le \abs{\xi}_c} \lambda(\abs{\xi}),
\end{equation}
and in the periodic case for $L$ satisfying \eqref{L_small} the largest rate is
\begin{equation}\label{max_def_per}
0< \Lambda_L:= \sup \{ \lambda(\abs{\xi}) \;\vert\; \xi \in (L^{-1}\mathbb{Z})^2 \text{ and } \abs{\xi} \in (0,\abs{\xi}_c)    \}.
\end{equation}
Note that in general  $\Lambda_L < \Lambda$.  In either case, the largest growth rate is achieved for some particular choice of $\xi$.    

The stabilizing effects of viscosity and surface tension are evident in these results.  As we showed in \cite{guo_tice}, without viscosity or surface tension, $\lambda(\abs{\xi})\rightarrow \infty$ as $\abs{\xi} \rightarrow \infty$.  With viscosity but no surface tension, all spatial frequencies remain unstable, but the growth rate $\lambda(\abs{\xi})$ is bounded and decays to $0$ as $\abs{\xi} \rightarrow \infty$.   With viscosity and surface tension, only a critical interval of spatial frequencies are unstable, and $\lambda(\abs{\xi})$ remains bounded.  Finally, with viscosity and surface tension and the periodicity $L$ satisfying \eqref{L_small} there do not exist any growing modes.

In the periodic case when $L$ satisfies \eqref{L_large}, the solutions to \eqref{w_1_equation}--\eqref{w_3_equation} constructed in Theorem \ref{w_soln_2} immediately give rise to growing mode solutions to \eqref{linearized_1}--\eqref{linearized_2}.

\begin{thm}[Proved in Section \ref{growing_section}]\label{growing_mode_soln_periodic}
Suppose that $L$ satisfies \eqref{L_large} and let $\xi_1,\xi_2 \in (L^{-1}\mathbb{Z})^2$ be lattice points  such that $\xi_1 = -\xi_2$ and   $\lambda(\abs{\xi_i}) = \Lambda_L,$
where $\Lambda_L$ is defined by \eqref{max_def_per}. Define 
\begin{equation}
 \hat{w}(\xi,x_3) =  -i \varphi(\xi,x_3) e_1 - i \theta(\xi,x_3) e_2 + \psi(\xi,x_3) e_3,
\end{equation}
where $\varphi,\theta,\psi$ are the solutions provided by Theorem \ref{w_soln_2}.   Writing $x' = x_1 e_1 + x_2 e_2$, we define 
\begin{equation}
 \eta(x,t) =  e^{\Lambda_L t}  \sum_{j=1}^2 \hat{w}(\xi_j,x_3)  e^{i x'\cdot \xi_j},
\end{equation}
\begin{equation}
 v(x,t) = \Lambda_L e^{\Lambda_L t} \sum_{j=1}^2  \hat{w}(\xi_j,x_3)  e^{i x'\cdot \xi_j},
\end{equation}
and
\begin{equation}
 q(x,t) 
= - e^{\Lambda_L t} \rho_0(x_3) \sum_{j=1}^2  (  e_1 \cdot \xi_j \varphi(\xi_j,x_3) +  e_2 \cdot \xi_j \theta(\xi_j,x_3) + \partial_3 \psi(\xi_j,x_3))   e^{i x'\cdot \xi_j}.
\end{equation}
Then $\eta,v,q$ are real solutions to  \eqref{linearized_1}--\eqref{linearized_2} and the corresponding jump and boundary conditions. For every $t \ge 0$ we have  $\eta(t),v(t),q(t) \in H^k(\Omega)$ and  
\begin{equation}\label{gmsp_0}
\begin{cases}
 \norm{\eta(t)}_{H^k} = e^{t \Lambda_L } \norm{\eta(0)}_{H^k}     \\
 \norm{v(t)}_{H^k}  =  e^{t \Lambda_L} \norm{v(0)}_{H^k}  \\
 \norm{q(t)}_{H^k}  = e^{t \Lambda_L} \norm{q(0)}_{H^k} 
\end{cases}
\end{equation}
\end{thm}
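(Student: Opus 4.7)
The plan is to leverage the fact that the normal-mode reduction performed in Section 1.5 is reversible on each fixed Fourier mode, and then to symmetrize over $\xi_1$ and $\xi_2 = -\xi_1$ using the equivariance \eqref{w_s_0} to obtain real fields. For each $j \in \{1,2\}$, set $w_j(x) := \hat{w}(\xi_j, x_3) e^{ix'\cdot \xi_j}$ and define the complex triples
\[
\eta_j := e^{\Lambda_L t} w_j, \qquad v_j := \Lambda_L e^{\Lambda_L t} w_j, \qquad q_j := -e^{\Lambda_L t} \rho_0 \diverge w_j.
\]
The identities $\dt \eta_j = v_j$ and $\dt q_j + \rho_0 \diverge v_j = 0$ are then immediate, so the first pair of equations in \eqref{linearized_1} is automatic. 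Substituting these expressions into the momentum equation \eqref{linearized_2} and dividing by $e^{\Lambda_L t}$ recovers exactly the time-invariant equation \eqref{linear_timeless_0} with $w = w_j$ and $\lambda = \Lambda_L$, while the same substitution in \eqref{linearized_jump} recovers its accompanying jump condition. Since the normal-mode reduction from \eqref{linear_timeless_0} to \eqref{w_1_equation}--\eqref{w_3_equation} together with \eqref{coupled_conds_1}--\eqref{coupled_conds_2} is an algebraic equivalence, Theorem \ref{w_soln_2} guarantees that these hold for $\hat{w}(\xi_j, \cdot)$ at growth rate $\lambda(|\xi_j|) = \Lambda_L$. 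Thus each $(\eta_j, v_j, q_j)$ is a complex solution of the linearized system with the appropriate interfacial and boundary conditions.

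For reality, I would apply the equivariance \eqref{w_s_0} with the rotation $R \in SO(2)$ defined by $R\xi = -\xi$. This yields $\varphi(-\xi, x_3) = -\varphi(\xi, x_3)$, $\theta(-\xi, x_3) = -\theta(\xi, x_3)$, and $\psi(-\xi, x_3) = \psi(\xi, x_3)$; combined with the reality of $\varphi, \theta, \psi$ and with $e^{ix'\cdot \xi_2} = \overline{e^{ix'\cdot \xi_1}}$, this gives $(\eta_2, v_2, q_2) = (\overline{\eta_1}, \overline{v_1}, \overline{q_1})$. Hence the sums $\eta = \eta_1 + \eta_2$, $v = v_1 + v_2$, $q = q_1 + q_2$ are real-valued, and since the linearized system and its jump and boundary conditions have real coefficients, these real sums inherit the fact of being solutions. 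The initial-data compatibility $\jump{\eta(0)} = 0$ follows from $\jump{\varphi} = \jump{\theta} = \jump{\psi} = 0$ in Theorem \ref{w_soln_2}.

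For \eqref{gmsp_0}, the only time dependence in the formulas for $\eta, v, q$ is the scalar factor $e^{\Lambda_L t}$, so $\eta(\cdot,t) = e^{\Lambda_L t} \eta(\cdot, 0)$ and similarly for $v$ and $q$, yielding the norm identities for any Sobolev index. Finiteness of $\norm{\eta(0)}_{H^k}$ and its counterparts follows from the smoothness of $\hat{w}(\xi_j, \cdot)$ on $(-m,0) \cup (0, \ell)$ provided by Theorem \ref{w_soln_2} together with the compactness of the periodic cell, with $H^k(\Omega)$ understood as $H^k(\Omega_+) \oplus H^k(\Omega_-)$ since the functions need not be $C^1$ across $\{x_3 = 0\}$. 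The only real obstacle in this argument is bookkeeping: one must confirm that every step of the normal-mode reduction in Section 1.5 pulls back faithfully, in particular that \eqref{coupled_conds_1} together with its successor reassembles to \eqref{linearized_jump} under the ansatz. Once this is checked, the theorem reduces to a direct substitution made possible by Theorem \ref{w_soln_2}.
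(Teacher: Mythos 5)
Your proposal is correct and follows essentially the same route as the paper's (very brief) proof: direct verification that the normal-mode ansatz pulls back through the reduction of Section 1.5 so that Theorem \ref{w_soln_2} gives the equations and jump/boundary conditions, reality via the equivariance \eqref{w_s_0} applied with $R\xi=-\xi$ so that the two modes are complex conjugates, and the norm identities \eqref{gmsp_0} from the scalar factor $e^{\Lambda_L t}$. The only small refinement worth noting is that membership in the piecewise space $H^k(\Omega)$ is most cleanly justified by the $H^k((-m,0))$ and $H^k((0,\ell))$ bounds of Lemma \ref{sobolev_bounds} (equivalently, the bootstrapping in Proposition \ref{e_l_eqns}), rather than smoothness on the open subintervals alone.
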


\begin{remark}
 In this theorem, the space $H^k(\Omega)$ is not the usual Sobolev space of order $k$, but what we call the piecewise Sobolev space of order $k$.  See \eqref{sob_def} for the precise definition.
\end{remark}

In the non-periodic case, although $\Lambda = \lambda(\abs{\xi})$ for some $\abs{\xi}\in(0,\abs{\xi}_c)$, no $L^2(\Omega)$ solution to \eqref{linearized_1}--\eqref{linearized_2} may be constructed from a solution to \eqref{w_1_equation}--\eqref{w_3_equation} as in the periodic case since $e^{i x'\cdot \xi} \notin L^2(\Omega)$.  We get around this problem by utilizing a Fourier synthesis of such solutions.  The tradeoff for getting $L^2(\Omega)$ solutions is that the growth rate is not exactly $e^{\Lambda t}$.  Nevertheless,  it is possible to construct solutions that grow arbitrarily close to this rate.

\begin{thm}[Proved in Section \ref{growing_section}]\label{growing_mode_soln}
Let $f\in C_c^\infty((0,\abs{\xi}_c))$ be a real-valued function.  For $\xi \in \Rn{2}$ with $\abs{\xi} \in(0,\abs{\xi}_c)$ define
\begin{equation}
 \hat{w}(\xi,x_3) =  -i \varphi(\xi,x_3) e_1 - i \theta(\xi,x_3) e_2 + \psi(\xi,x_3) e_3,
\end{equation}
where $\varphi,\theta,\psi$ are the solutions provided by Theorem \ref{w_soln_2}.  Writing $x' = x_1 e_1 + x_2 e_2$, we define
\begin{equation}\label{g_m_s_1}
 \eta(x,t) = \frac{1}{4\pi^2} \int_{\Rn{2}} f(\abs{\xi}) \hat{w}(\xi,x_3) e^{\lambda(\abs{\xi}) t} e^{i x'\cdot \xi} d\xi,
\end{equation}
\begin{equation}
 v(x,t) = \frac{1}{4\pi^2}  \int_{\Rn{2}}\lambda(\abs{\xi}) f(\abs{\xi}) \hat{w}(\xi,x_3) e^{\lambda(\abs{\xi}) t} e^{i x'\cdot \xi} d\xi ,
\end{equation}
and
\begin{equation}\label{g_m_s_2}
 q(x,t) 
= -\frac{\rho_0(x_3)}{4\pi^2}  \int_{\Rn{2}} f(\abs{\xi}) (\xi_1 \varphi(\xi,x_3) +  \xi_2 \theta(\xi,x_3) + \partial_{x_3} \psi(\xi,x_3)) e^{\lambda(\abs{\xi}) t} e^{i x'\cdot \xi} d\xi.
\end{equation}
Then $\eta,v,q$ are real-valued solutions to the linearized equations \eqref{linearized_1}--\eqref{linearized_2} along with the corresponding jump and boundary conditions.  The solutions are equivariant in the sense that if $R\in SO(3)$ is a rotation that keeps the vector $e_3$ fixed, then
\begin{equation}\label{g_m_s_00}
\eta(Rx,t) = R \eta(x,t), v(Rx,t) = R v(x,t), \text{and } q(Rx,t) = q(x,t).
\end{equation}
For every $k \in \mathbb{N}$ we have the estimate
\begin{equation}\label{g_m_s_0} 
\norm{\eta(0)}_{H^k} + \norm{v(0)}_{H^k} + \norm{q(0)}_{H^k} \le \bar{C}_k \left( \int_{\Rn{2}} (1+\abs{\xi}^2)^{k+1} \abs{f(\xi)}^2 d\xi \right)^{1/2} < \infty
\end{equation}
for a constant $\bar{C}_k>0$ depending on the parameters $\rho^\pm_0, P_{\pm}, g,\sigma, m, \ell$;  moreover, for every $t > 0$ we have  $\eta(t),v(t),q(t) \in H^k$ and  
\begin{equation}\label{g_m_s_3}
\begin{cases}
e^{t \lambda_0(f)} \norm{\eta(0)}_{H^k} \le   \norm{\eta(t)}_{H^k} \le e^{t \Lambda } \norm{\eta(0)}_{H^k}     \\
e^{t \lambda_0(f)} \norm{v(0)}_{H^k} \le \norm{v(t)}_{H^k}  \le  e^{t \Lambda} \norm{v(0)}_{H^k}  \\
e^{t \lambda_0(f)} \norm{q(0)}_{H^k} \le \norm{q(t)}_{H^k}  \le e^{t \Lambda} \norm{q(0)}_{H^k} 
\end{cases}
\end{equation}
where
\begin{equation}
 \lambda_0(f) = \inf_{\abs{\xi} \in \supp(f)} \lambda(\abs{\xi}) >0
\end{equation}
and $\Lambda$ is given by \eqref{max_def}.
\end{thm}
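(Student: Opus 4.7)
The strategy is to implement the Fourier synthesis indicated by \eqref{g_m_s_1}--\eqref{g_m_s_2}, verify that the resulting functions solve \eqref{linearized_1}--\eqref{linearized_2}, and then estimate their piecewise Sobolev norms via Plancherel in the horizontal directions. First I would check that $\eta,v,q$ are real-valued and equivariant. Using \eqref{w_s_0} with $R=-I \in SO(2)$ together with $\lambda(\abs{-\xi}) = \lambda(\abs{\xi})$, one finds that $\hat w(-\xi,x_3) = \overline{\hat w(\xi,x_3)}$ (the factors of $-i$ in the first two components of $\hat w$ are exactly what produces this conjugation symmetry), so the integrands in \eqref{g_m_s_1}--\eqref{g_m_s_2} are invariant under the combined transformation $\xi \mapsto -\xi$ and complex conjugation. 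The equivariance \eqref{g_m_s_00} then follows from a change of variables $\xi \mapsto R^T \xi$ in the integrals, using $f(\abs{R^T \xi}) = f(\abs{\xi})$, $\lambda(\abs{R^T\xi}) = \lambda(\abs{\xi})$, and the matrix identity in \eqref{w_s_0}.

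Next I would verify the PDE, jump, and boundary conditions. By Theorem \ref{w_soln_2}, for each fixed $\xi$ with $\abs{\xi}\in (0,\abs{\xi}_c)$ the function $(x,t)\mapsto \hat w(\xi,x_3) e^{\lambda(\abs{\xi}) t} e^{i x'\cdot \xi}$ is an exact normal-mode solution of \eqref{linearized_1}--\eqref{linearized_2} together with the jump and boundary conditions, and the corresponding $q$-component is exactly the one appearing in \eqref{g_m_s_2}. Because $f\in C_c^\infty((0,\abs{\xi}_c))$, the integration is carried out on a compact subset of $(0,\abs{\xi}_c)$ where $\lambda$ is uniformly bounded and $\hat w$ is smooth on each fluid slab, so differentiation under the integral sign is justified and each of \eqref{linearized_1}--\eqref{linearized_2}, together with \eqref{linearized_jump} and the no-slip conditions, is inherited term by term from the normal modes.

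The main work is the Sobolev estimate. Applying Plancherel in the $(x_1,x_2)$ variables one obtains, for any $t\ge 0$,
\begin{equation*}
 \norm{\eta(t)}_{H^k}^2 \simeq \sum_{j=0}^k \int_{\Rn{2}} (1+\abs{\xi}^2)^{k-j} \abs{f(\abs{\xi})}^2 e^{2\lambda(\abs{\xi}) t} \norm{\partial_{x_3}^j \hat w(\xi,\cdot)}_{L^2(-m,\ell)}^2 \, d\xi,
\end{equation*}
and the analogous identities for $v$ and $q$, with the extra $\abs{\xi}^2$-type factors in $q$ absorbed into the weight $(1+\abs{\xi}^2)^{k+1}$. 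The crux is to establish $\norm{\hat w(\xi,\cdot)}_{H^j_{x_3}} \le C_j$ uniformly for $\xi$ in the compact set $\supp(f) \subset (0,\abs{\xi}_c)$. The $L^2_{x_3}$ bound follows from the natural normalization of the minimizers built in Section \ref{mod_section} (and referenced by Theorem \ref{w_soln_2}), while higher $x_3$-derivative bounds come from solving the ODEs \eqref{w_1_equation}--\eqref{w_3_equation} for the top-order $x_3$ derivatives and bootstrapping, using continuity of $\lambda(\abs{\xi})$ and $\hat w(\xi,\cdot)$ in $\xi$ on $\supp(f)$ (Proposition \ref{lambda_cont}) to get uniform constants. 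This produces \eqref{g_m_s_0}.

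Finally, the upper and lower bounds in \eqref{g_m_s_3} follow by inserting the pointwise inequalities $e^{\lambda_0(f) t} \le e^{\lambda(\abs{\xi}) t} \le e^{\Lambda t}$, valid for $\xi\in \supp(f)$, into the Plancherel identities above and factoring the exponential outside the integral. The main obstacle I anticipate is the uniform-in-$\xi$ regularity control of $\hat w$: one must verify that the minimizers produced by the modified variational problem depend measurably (and in fact continuously) on $\xi$ across $\supp(f)$ and admit uniform $H^k_{x_3}$ bounds there, which requires careful tracking of constants through the minimization-plus-ODE-bootstrap argument used to prove Theorem \ref{w_soln_2}.
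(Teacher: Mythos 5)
Your proposal is correct and follows essentially the same route as the paper: normal modes from Theorem \ref{w_soln_2} are superposed with compactly supported $f$, the equations and jump/boundary conditions are inherited by differentiating under the integral, Parseval in the horizontal variables together with uniform-in-$\xi$ bounds on $\hat w$ over $\supp(f)$ gives \eqref{g_m_s_0}, and the pointwise inequality $\lambda_0(f)\le\lambda(\abs{\xi})\le\Lambda$ gives \eqref{g_m_s_3}. The uniform $H^k_{x_3}$ control you flag as the main obstacle is precisely the content of Lemma \ref{sobolev_bounds} (proved by the same ODE bootstrap you describe), so your argument matches the paper's.
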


The vertical component of the initial linearized flow map at the interface between the two fluids is given in the periodic case by 
\begin{equation}
 \eta_3(x_1,x_2,0,0) =   2 \psi( \xi_1 ,0)  \cos( x'\cdot \xi_1),
\end{equation}
and in  the non-periodic case by
\begin{equation}
 \eta_3(x_1,x_2,0,0) = \frac{1}{4\pi^2}  \int_{\Rn{2}} f(\abs{\xi})  \psi( \xi ,0) \cos( x'\cdot \xi) d\xi.
\end{equation}
Since $\psi(\xi,0)\neq 0$ for any choice of $\xi$, a nonzero $f$ in general gives rise to a nonzero $\eta_3(x_1,x_2,0,0)$ in the non-periodic case, and $\eta_3(x_1,x_2,0,0)$ cannot vanish identically in the periodic case.  From this we see that vertical displacement is essential to our unstable solutions.

It is conceivable that the solutions we construct via the modified viscosity trick somehow fail to achieve the fastest growing modes, and so it is not obvious that the growing solutions constructed in Theorems \ref{growing_mode_soln_periodic} and \ref{growing_mode_soln} grow in time at the fastest rate possible.  Nevertheless, this result is true.  In the non-periodic case and in the periodic case when $L$ satisfies \eqref{L_large}, we can estimate the growth in time of arbitrary solutions to \eqref{linearized_1}--\eqref{linearized_2} in terms of $\Lambda$ and $\Lambda_L$.  The technique we employ was inspired by a similar result, proved in \cite{guo_hw}, for the inviscid, incompressible regime with smooth density profile.

To state the result, we first define the weighted $L^2$ norm and the viscosity seminorm by
\begin{equation}\label{norm_def}
 \norm{v}^2_1 = \int_\Omega \rho_0 \abs{v}^2 \; \text{ and } \;
 \norm{v}^2_2 = \int_\Omega \frac{\ep_0}{2} \abs{Dv+ Dv^T - \frac{2}{3}(\diverge{v})I}^2 + \delta_0 \abs{\diverge{v}}^2
\end{equation}
and for $i=1,2$ we write $\langle \cdot, \cdot \rangle_{i}$  for the inner-product giving rise to each.

\begin{thm}[Proved in Section \ref{linear_growth_section}]\label{lin_growth_bound}
Let $v,\eta,q$ be a solution to \eqref{linearized_1}--\eqref{linearized_2} along with the corresponding jump and boundary conditions.  Then in the non-periodic case
\begin{multline}
\norm{ v(t)}_1^2 + \norm{v(t)}_2^2  +\norm{\dt v(t)}_1^2 \\
 \le  C e^{2\Lambda t}\left(\norm{\dt v(0)}_1^2 + \norm{ v(0)}_1^2 + \norm{v(0)}_2^2  +\sigma  \int_{\Rn{2}} \abs{\nab_{x_1,x_2} v_3(0)}^2 \right)
\end{multline}
for a constant $0<C = C(\rho_0^{\pm},P_{\pm},\Lambda,\ep,\delta,\sigma,g,m,\ell)$.  In the periodic case with $L$ satisfying \eqref{L_large}, the same inequality holds with $\Lambda$ replaced with $\Lambda_L$ and the integral over $\Rn{2}$ replaced with an integral over $(2\pi L \mathbb{T})^2$.
\end{thm}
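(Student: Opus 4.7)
My approach would follow the spirit of the inviscid-incompressible analog in \cite{guo_hw}: combine a horizontal Fourier decomposition with the variational characterization of $\Lambda$ from Section \ref{mod_section} to construct a twisted energy functional satisfying a Gronwall inequality at rate $2\Lambda$.

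\emph{Fourier reduction and abstract form.} First I would take the horizontal Fourier transform of \eqref{linearized_1}--\eqref{linearized_2}. After eliminating $\hat q$ and $\hat \eta$ via $\dt \eta = v$ and $\dt q + \rho_0 \diverge v = 0$ (using initial-data compatibility), for each frequency $\xi$ the system reduces to a second-order abstract ODE
\[
\rho_0 \dt^2 \hat v + L_\xi \dt \hat v + K_\xi \hat v = 0
\]
on the space of admissible $x_3$-profiles on $(-m,\ell)$, where $L_\xi \ge 0$ is the self-adjoint viscous operator and $K_\xi$ is the self-adjoint (possibly indefinite) pressure-gravity-surface-tension operator. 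Integration by parts against the jump condition \eqref{linearized_jump} contributes $\sigma\abs{\xi}^2\abs{\hat v_3(0)}^2$ to $\langle K_\xi \hat v, \hat v\rangle$. Parseval's identity decomposes the Sobolev norms appearing in the theorem into integrals (or sums) over $\xi$ of mode-wise norms, reducing the task to a uniform-in-$\xi$ estimate.

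\emph{Poincar\'e-type inequality and twisted energy.} The crucial step is to establish the uniform inequality
\[
\langle K_\xi w, w\rangle + \Lambda\langle L_\xi w, w\rangle + \Lambda^2\langle\rho_0 w, w\rangle \ge 0
\]
for every admissible $w$ and every $\xi$. For $0<\abs{\xi}<\abs{\xi}_c$ this follows from the modified variational identity of Section \ref{mod_section} applied at $s=\lambda(\abs{\xi})$, which produces the same inequality with $\lambda(\abs{\xi})$ in place of $\Lambda$, combined with the monotonicity of $\mu\mapsto\mu^2\langle\rho_0 w, w\rangle + \mu\langle L_\xi w, w\rangle$ on $[0,\infty)$ and the bound $\lambda(\abs{\xi})\le\Lambda$ from \eqref{max_def}. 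For $\xi=0$ and $\abs{\xi}\ge\abs{\xi}_c$, the absence of growing modes yields $K_\xi\ge 0$ and the inequality is immediate. Setting $u:=\dt\hat v - \Lambda\hat v$, I would then define
\[
\mathcal{E}_\xi(t) = \hal\langle\rho_0 u, u\rangle + \hal\bigl(\langle K_\xi \hat v, \hat v\rangle + \Lambda\langle L_\xi\hat v,\hat v\rangle + \Lambda^2\langle\rho_0\hat v,\hat v\rangle\bigr) \ge 0.
\]
Differentiating, using the abstract ODE, and exploiting self-adjointness of $K_\xi$ and $L_\xi$ produces the cancellation
\[
\tfrac{d}{dt}\mathcal{E}_\xi = 2\Lambda\mathcal{E}_\xi - 2\Lambda\langle\rho_0 u, u\rangle - \langle L_\xi u, u\rangle \le 2\Lambda\mathcal{E}_\xi,
\]
and Gronwall gives $\mathcal{E}_\xi(t)\le e^{2\Lambda t}\mathcal{E}_\xi(0)$.

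\emph{Reassembly and recovery of individual norms.} Integrating (or summing, in the periodic case) over $\xi$ via Parseval returns the estimate to physical space as $\mathcal{E}(t)\le e^{2\Lambda t}\mathcal{E}(0)$. The inequality $\mathcal{E}(t)\ge\hal\norm{\dt v-\Lambda v}_1^2$ controls $\norm{\dt v(t)-\Lambda v(t)}_1^2$, and combining this with $\mathcal{E}(t)\ge\tfrac{\Lambda^2}{2}\norm{v(t)}_1^2 + \tfrac{\Lambda}{2}\norm{v(t)}_2^2$ (valid once the potentially negative $\langle K v(t), v(t)\rangle$ component is absorbed via the Poincar\'e bound and the boundedness of the unstable frequency range $\{\abs{\xi}<\abs{\xi}_c\}$) produces the desired bounds on $\norm{v(t)}_1^2$ and $\norm{v(t)}_2^2$, with $\norm{\dt v(t)}_1^2$ following by the triangle inequality. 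The initial energy $\mathcal{E}(0)$ is dominated by the right-hand side of the theorem; the term $\sigma\int_{\Rn{2}}\abs{\nab_{x_1,x_2}v_3(0)}^2$ comes precisely from the surface-tension contribution to $\langle K v(0), v(0)\rangle$ identified in the first step. The main obstacle will be verifying the cancellation identity for $\tfrac{d}{dt}\mathcal{E}_\xi$ while carefully tracking all jump and boundary contributions from the integration by parts in $x_3$, and handling the degeneration of the variational monotonicity argument as $\abs{\xi}\to\abs{\xi}_c^-$, where $\lambda(\abs{\xi})\to 0$ and one must rely on the positivity of the surface-tension and pressure pieces of $K_\xi$ to close the estimate.
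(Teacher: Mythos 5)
Your first two steps are sound: the reduction to the self-adjoint per-frequency form, the inequality $\langle K_\xi w,w\rangle + \Lambda\langle L_\xi w,w\rangle + \Lambda^2\langle \rho_0 w,w\rangle \ge 0$ (this is exactly the content of the paper's Lemma \ref{lin_en_bound}, proved there the same way, from $E(\varphi,\psi;\lambda(\abs{\xi})) \ge -\lambda^2(\abs{\xi}) J(\varphi,\psi)$ and $\lambda(\abs{\xi})\le\Lambda$, with the regimes $\abs{\xi}\ge\abs{\xi}_c$ and $\xi=0$ handled by sign considerations and an integration by parts), and the differential identity $\frac{d}{dt}\mathcal{E}_\xi = 2\Lambda\mathcal{E}_\xi - 2\Lambda\langle\rho_0 u,u\rangle - \langle L_\xi u,u\rangle$ does check out. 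The genuine gap is in the reassembly step: the claimed lower bound $\mathcal{E}(t) \ge \tfrac{\Lambda^2}{2}\norm{v(t)}_1^2 + \tfrac{\Lambda}{2}\norm{v(t)}_2^2$ is false. Your Poincar\'e-type inequality says only that the potential part of $\mathcal{E}$ is nonnegative; it degenerates exactly on the worst directions. Indeed, if $v = e^{\Lambda t} w$ is a maximal growing mode (a frequency with $\lambda(\abs{\xi})=\Lambda$, as produced in Theorem \ref{growing_mode_soln_periodic}), then $u = \dt v - \Lambda v \equiv 0$ and $\langle K_\xi w,w\rangle + \Lambda\langle L_\xi w,w\rangle + \Lambda^2\langle\rho_0 w,w\rangle = 0$ by the eigenvalue equation, so $\mathcal{E}\equiv 0$ while $\norm{v(t)}_1>0$; your asserted coercivity fails already at $t=0$. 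The ``absorption'' you invoke cannot be repaired by the boundedness of the unstable frequency range, since the degeneracy occurs at frequencies inside that range, and the negative gravity term is absorbed only at the cost of using up all of the positive $\Lambda^2\norm{v}_1^2 + \Lambda\norm{v}_2^2$. The natural repair within your framework --- integrating $\dt(e^{-\Lambda t}v) = e^{-\Lambda t}u$ and using $\norm{u(t)}_1 \le \sqrt{2\mathcal{E}(0)}\,e^{\Lambda t}$ together with the time-integrated dissipation $\int_0^t e^{-2\Lambda s}\langle L u,u\rangle\,ds \le \mathcal{E}(0)$ --- recovers $\norm{v(t)}_1$ and $\norm{v(t)}_2$ only up to an extra polynomial factor in $t$, which is strictly weaker than the stated bound $C e^{2\Lambda t}$.

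For comparison, the paper avoids the twisted energy entirely: it integrates the energy identity of Lemma \ref{lin_en_evolve} in time, applies Lemma \ref{lin_en_bound} only to the interface/potential terms evaluated at time $t$, obtaining
\begin{equation*}
\hal \norm{\dt v(t)}_1^2 + \int_0^t \norm{\dt v(s)}_2^2\,ds \le K_0 + \frac{\Lambda^2}{2}\norm{v(t)}_1^2 + \frac{\Lambda}{2}\norm{v(t)}_2^2,
\end{equation*}
and then closes a Gronwall argument for the quantity $\norm{v(t)}_1^2 + \int_0^t\norm{v(s)}_2^2\,ds$ using Cauchy's inequality on $\langle \dt v, v\rangle_1$ and on $\int_0^t \langle v,\dt v\rangle_2$; the bounds on $\norm{v(t)}_2^2$ and $\norm{\dt v(t)}_1^2$ are then recovered by substituting back. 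This route never needs pointwise-in-time coercivity of a conserved-type functional and delivers the sharp rate $e^{2\Lambda t}$ with no polynomial loss. If you want to keep your per-frequency twisted energy, you would need to replace the false coercivity claim by an argument of this integrated Gronwall type, at which point the twisted energy no longer buys anything.
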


In the periodic case, when there is surface tension and  $L$ satisfies \eqref{L_small}, our method fails to construct any growing mode solutions.  A priori this does not rule out exponential-in-time growth of arbitrary solutions to the linearized equations, but it turns out that  exponential growth is impossible, and a sort of stability estimate is available.

\begin{thm}[Proved in Section \ref{linear_growth_section}]\label{periodic_stability}
In the periodic case let $L$ satisfy \eqref{L_small}.  For $j\ge 1$ define the constants $K_j\ge 0$ in terms of the initial data via
\begin{multline}
 K_j = \int_\Omega \rho_0 \frac{\abs{\dt^j v(0)}^2}{2} + \int_\Omega \frac{P'(\rho_0) \rho_0}{2}\abs{ \diverge{\dt^{j-1} v(0)} - \frac{g}{P'(\rho_0)} \dt^{j-1} v_3(0) }^2 \\
+ \int_{(2\pi L \mathbb{T})^2} \frac{\sigma}{2}\abs{\nab_{x_1,x_2} \dt^{j-1} v_3(0)}^2.
\end{multline}
Then solutions to \eqref{linearized_1}--\eqref{linearized_2} satisfy
\begin{equation}
 \norm{\eta(t)}_1  + \norm{\eta(t)}_2 \le \norm{\eta(0)}_1 + \norm{\eta(0)}_2 + t\left(\norm{v(0)}_1 + \norm{v(0)}_2 \right) + 2t^{3/2} \sqrt{K_1},
\end{equation}
\begin{equation}
 \norm{v(t)}_1  + \norm{v(t)}_2 \le \norm{v(0)}_1 + \norm{v(0)}_2 + 3\sqrt{t}\sqrt{K_1},
\end{equation}
and  for $j\ge 1$
\begin{equation}\label{p_s_0}
 \sup_{t\ge 0} \hal \norm{\dt^j v(t) }_1^2 + \int_0^\infty \norm{\dt^j v(t)}_2^2 dt \le 2 K_j
\end{equation}
and
\begin{equation}\label{p_s_00}
 \sup_{t\ge 0} \norm{\dt^j v(t)}^2_2 \le \norm{\dt^j v(0)}^2_2 + 2 \sqrt{K_j}\sqrt{K_{j+1}}.
\end{equation}
\end{thm}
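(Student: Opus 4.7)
The plan is to prove the core bound \eqref{p_s_0} via an energy identity for $\dt^j v$; the estimates on $\norm{\eta(t)}$ and $\norm{v(t)}$ then follow by integrating $\dt \eta = v$ and $v(t) = v(0) + \int_0^t \dt v\,ds$, using Cauchy--Schwarz in time together with a Korn--Poincar\'e-type bound $\norm{\cdot}_1 \le C\norm{\cdot}_2$ (valid because $v$ and its time derivatives vanish at $x_3 = -m,\ell$). Claim \eqref{p_s_00} follows from
\[
\norm{\dt^j v(t)}_2^2 - \norm{\dt^j v(0)}_2^2 = 2\int_0^t \langle \dt^j v,\dt^{j+1} v\rangle_2\,ds \le 2\sqrt{\textstyle\int_0^\infty \norm{\dt^j v}_2^2\,ds}\,\sqrt{\textstyle\int_0^\infty \norm{\dt^{j+1} v}_2^2\,ds}
\]
by Cauchy--Schwarz and \eqref{p_s_0} applied to both $j$ and $j+1$.

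Set $V := \dt^{j-1}v$ and $W := \dt V = \dt^j v$. Applying $\dt^{j-1}$ to \eqref{linearized_2} and substituting $\dt q = -\rho_0 \diverge v$ and $\dt\eta = v$ yields a closed equation
\[
\rho_0 \dt W - \nab(P'(\rho_0)\rho_0 \diverge V) - g\rho_0(\diverge V)e_3 + g\rho_0 \nab V_3 = \diverge \mu(W),
\]
where $\mu(W)$ denotes the viscous stress tensor. I would pair this with $W$, integrate over $\Omega_+\cup\Omega_-$, and integrate by parts in each piece, handling interface traces via the time-differentiated jump condition
\[
\jump{-P'(\rho_0)\rho_0\diverge V\,I - \mu(W)}e_3 = \sigma \Delta_{x_1,x_2} V_3\, e_3.
\]
Using $\rho_0' = -g\rho_0/P'(\rho_0)$ from \eqref{steady_state} to complete the square in $\diverge V$ and $V_3$, I expect to arrive at $\frac{d}{dt}\mathcal{E}_j(t) = -\norm{\dt^j v(t)}_2^2$ with
\begin{multline*}
\mathcal{E}_j(t) := \hal\norm{W}_1^2 + \hal\int P'(\rho_0)\rho_0 \Bigl|\diverge V - \tfrac{gV_3}{P'(\rho_0)}\Bigr|^2 \\ + \tfrac{\sigma}{2}\int_{\{x_3=0\}} |\nab_{x_1,x_2} V_3|^2 - \tfrac{g\jump{\rho_0}}{2}\int_{\{x_3=0\}} V_3^2,
\end{multline*}
so that $\mathcal{E}_j(0) = K_j - \tfrac{g\jump{\rho_0}}{2}\int V_3(0)^2 \le K_j$.

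The destabilizing surface term $-\tfrac{g\jump{\rho_0}}{2}\int V_3^2$ is exactly what the smallness hypothesis \eqref{L_small} is designed to handle. By Parseval on the torus $(2\pi L\mathbb{T})^2$, every nonzero lattice frequency $\xi\in L^{-1}\mathbb{Z}^2$ satisfies $\sigma|\xi|^2 \ge \sigma/L^2 \ge g\jump{\rho_0}$, and so for any $f$ on $(2\pi L\mathbb{T})^2$ with horizontal mean $\bar f$,
\[
\tfrac{\sigma}{2}\int|\nab_{x_1,x_2}f|^2 - \tfrac{g\jump{\rho_0}}{2}\int f^2 \ge -\tfrac{g\jump{\rho_0}(2\pi L)^2}{2}|\bar f|^2.
\]
Applied to $f = V_3|_{x_3=0}$ and combined with the energy identity, this gives $\hal\norm{W(t)}_1^2 + \int_0^t \norm{W}_2^2\,ds \le \mathcal{E}_j(0) + \tfrac{g\jump{\rho_0}(2\pi L)^2}{2}|\bar V_3|_{x_3=0}(t)|^2$. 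To control the mean, I use $V_3|_{x_3=\ell}=0$ together with the vanishing of horizontal averages of $\partial_1 V_1, \partial_2 V_2$ to write $\bar V_3|_{x_3=0}(t) = -\int_0^\ell \overline{\diverge V}(x_3,t)\,dx_3$; Cauchy--Schwarz and Parseval then bound this by $C\norm{\diverge V(t)}_{L^2(\Omega_+)}$. Splitting $\diverge V = (\diverge V - gV_3/P'(\rho_0)) + gV_3/P'(\rho_0)$ controls the first piece directly by the pressure term in $K_j$ and the second by an $x_3$-Poincar\'e inequality combined with $\norm{V}_1$, inherited from integrating $\dt V = W$ and the bound at the preceding level. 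After absorbing constants one arrives at $\hal\norm{\dt^j v(t)}_1^2 + \int_0^t \norm{\dt^j v}_2^2\,ds \le 2K_j$.

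The main obstacle is the careful derivation of the energy identity—three distinct surface contributions (pressure, gravity, and viscous stress) must recombine exactly via the time-differentiated jump condition to produce $-\tfrac{d}{dt}\tfrac{\sigma}{2}\int|\nab V_3|^2$—and the subsequent control of the zero horizontal Fourier mode of $V_3$ on $\{x_3=0\}$, which is the unique mode not damped by surface tension and must be estimated through the interior structure of $K_j$.
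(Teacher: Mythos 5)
Your skeleton coincides with the paper's: the energy identity you posit is exactly the paper's Lemma \ref{lin_en_evolve} (applied to $\dt^{j-1}v$, which satisfies the same second-order equation \eqref{second_order} with its jump and boundary conditions), the proof of \eqref{p_s_00} by Cauchy--Schwarz in time is identical, and the $v$, $\eta$ bounds by integrating $\dt\eta=v$ are as in the paper. The genuine gap is in how you dispose of the destabilizing interface term, i.e.\ the zero horizontal Fourier mode. The paper (Lemma \ref{periodic_bound}) shows that the \emph{whole} combination
\begin{equation*}
\int_{(2\pi L\mathbb{T})^2}\frac{g\jump{\rho_0}}{2}\abs{V_3}^2-\frac{\sigma}{2}\abs{\nab_{x_1,x_2}V_3}^2
-\int_\Omega\frac{P'(\rho_0)\rho_0}{2}\abs{\diverge{V}-\frac{g}{P'(\rho_0)}V_3}^2
\end{equation*}
is non-positive, mode by mode: the nonzero lattice modes are killed by \eqref{L_small} exactly as you argue, while for the $\xi=0$ mode one expands the square and integrates by parts in $x_3$ using the steady-state relation \eqref{steady_state} (the computation \eqref{n_i_1}), obtaining the exact identity
\begin{equation*}
\frac{g\jump{\rho_0}}{2}\abs{\hat V_3(0,0)}^2-\int_{-m}^\ell\frac{P'(\rho_0)\rho_0}{2}\abs{\partial_3\hat V_3(0,\cdot)-\frac{g}{P'(\rho_0)}\hat V_3(0,\cdot)}^2
=-\hal\int_{-m}^\ell P'(\rho_0)\rho_0\abs{\partial_3\hat V_3(0,\cdot)}^2\le 0,
\end{equation*}
with no constants and no reference to the solution at other times. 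This is the missing idea in your outline.

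By contrast, you isolate $-\tfrac{g\jump{\rho_0}(2\pi L)^2}{2}\abs{\bar V_3(t)}^2$ and try to control the interface mean at time $t$ by Cauchy--Schwarz through $\norm{\diverge{V(t)}}_{L^2(\Omega_+)}$, splitting off $gV_3/P'(\rho_0)$ and appealing to $\norm{V(t)}_1$ obtained by integrating $\dt V=W$. This step fails to deliver \eqref{p_s_0}: absorbing the piece proportional to the interior pressure term of $\mathcal{E}_j(t)$ requires smallness of a constant of size $g\jump{\rho_0}\ell(2\pi L)^2/\inf(P'(\rho_0)\rho_0)$ that is nowhere assumed, and the remaining piece is proportional to $\norm{V_3(t)}_{L^2}^2$, which for $j=1$ (where $V=v$) is not uniformly bounded in time --- the theorem itself only asserts growth like $\sqrt{t}$ for $\norm{v(t)}_1+\norm{v(t)}_2$ --- so your right-hand side grows in $t$ and the uniform bound by $2K_j$ is lost; for $j\ge2$ one would at best obtain a bound contaminated by $K_{j-1}$ and non-explicit constants. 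Replacing this step by the exact zero-mode cancellation above, the time-integrated energy identity immediately gives $\hal\norm{\dt^j v(t)}_1^2+\int_0^t\norm{\dt^j v(s)}_2^2\,ds\le K_j$, after which the rest of your argument goes through (your Korn--Poincar\'e detour for the $v$ and $\eta$ estimates is harmless but introduces an unspecified constant where the theorem states explicit ones).
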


Our method of studying a family of modified variational problems in order to produce  growing  solutions to linearized problems where viscosity has destroyed the proper variational structure is quite general and robust.  The method may be used to construct growing solutions to the compressible Navier-Stokes-Poisson equations with viscosity (cf. \cite{lin}), and we expect it to be useful for many other viscous, compressible hydrodynamic stability problems.  The linear instability analysis of this paper comprises the first step in an analysis of the non-linear instability of the full equations \eqref{lagrangian_equations}, which will be completed in \cite{guo_tice_2}.  A non-linear instability analysis of the compressible Navier-Stokes-Poisson equations based on linear growing solutions constructed using our method will be completed in \cite{jang_tice} for the case of constant viscosity and in  \cite{g_l_t} for the case of density-dependent viscosity.

The plan of the paper is as follows.  In Section 3 we study the family of modified variational problems in order to produce growing solutions to \eqref{linearized_1}--\eqref{linearized_2}.  In Section 4 we prove the growth estimates for arbitrary solutions to the linearized problem.

\section{A family of modified variational problems}

\subsection{Solutions to \eqref{w_1_equation}--\eqref{w_2_equation} via constrained minimization}\label{mod_section}

In this section we will produce a solution to \eqref{w_1_equation}--\eqref{w_2_equation} with fixed $\abs{\xi}>0$  by first utilizing variational methods to construct solutions to the modified problem \eqref{s_coupled_1}--\eqref{s_coupled_2}. In order to understand $\lambda$ in a variational framework we consider the two energies
\begin{multline}\label{E_def}
E(\varphi,\psi) = \frac{\sigma \abs{\xi}^2}{2}(\psi(0))^2 
+  \hal \int_{-m}^\ell (\td + P'(\rho_0) \rho_0) (\psi' + \abs{\xi} \varphi)^2    
- 2  g\rho_0  \abs{\xi} \varphi \psi  \\
+ \hal \int_{-m}^\ell \tep \left(  (\varphi' - \abs{\xi} \psi)^2 + (\psi'- \abs{\xi}\varphi)^2 + \frac{1}{3}(\psi' + \abs{\xi} \varphi)^2\right) 
\end{multline}
and 
\begin{equation}\label{J_def}
 J(\varphi,\psi) = \hal \int_{-m}^\ell \rho_0  (\varphi^2 + \psi^2),
\end{equation}
which are both well-defined on the space $H_0^1((-m,\ell)) \times H^1_0((-m,\ell))$.  Consider the set
\begin{equation}
 \mathcal{A} = \{ (\varphi,\psi)\in H_0^1((-m,\ell)) \times H^1_0((-m,\ell)) \;\vert\;  J(\varphi,\psi)=1  \}.
\end{equation}

We want to show that the infimum of $E(\varphi,\psi)$ over the set $\mathcal{A}$ is achieved and is negative, and that the minimizer solves the equations \eqref{s_coupled_1}--\eqref{s_coupled_2} along with the corresponding jump and boundary conditions.  Notice that the jump condition $\jump{\varphi} =\jump{\psi} =0$ holds trivially since $\varphi,\psi \in H_0^1((-m,\ell))$.  Also notice that by employing the identity $-2ab=(a-b)^2-(a^2+b^2)$ and the constraint on $J(\varphi,\psi)$ we may rewrite 
\begin{multline}\label{E_lower_bound}
E(\varphi,\psi) = -g \abs{\xi} + \frac{\sigma \abs{\xi}^2}{2}(\psi(0))^2 
+ \hal \int_{-m}^\ell (\td+  P'(\rho_0) \rho_0) (\psi' + \abs{\xi} \varphi)^2    
+  g\abs{\xi} \rho_0 (\varphi-\psi)^2 \\
+\hal \int_{-m}^\ell \tep \left(  (\varphi' - \abs{\xi} \psi)^2 + (\psi'- \abs{\xi}\varphi)^2 + \frac{1}{3}(\psi' + \abs{\xi} \varphi)^2\right) 
  \ge -g\abs{\xi}
\end{multline}
for any $(\varphi,\psi)\in \mathcal{A}$.   Recall that $\tep = s \ep(\rho_0)$, which is smooth when restricted to $(-m,0)$ and $(0,\ell)$ and bounded above and below by positive quantities for fixed $s>0$.  In order to emphasize the dependence on $s \in (0,\infty)$ we will sometimes write
\begin{equation}
 E(\varphi,\psi) = E(\varphi,\psi;s)
\end{equation}
and 
\begin{equation}\label{mu_def}
 \mu(s) := \inf_{(\varphi,\psi)\in \mathcal{A}}E(\varphi,\psi;s).
\end{equation}

As the first order of business we show that a minimizer exists.

\begin{prop}\label{min_exist}
$E$ achieves its infimum on $\mathcal{A}$.
\end{prop}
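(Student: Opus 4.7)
The plan is to apply the direct method of the calculus of variations to $E(\cdot,\cdot;s)$ on $\mathcal{A}$. The rearrangement already carried out in \eqref{E_lower_bound} shows that $E(\varphi,\psi;s) \ge -g\abs{\xi}$ on $\mathcal{A}$, so $\mu(s)$ is finite. I would begin by selecting a minimizing sequence $\{(\varphi_n,\psi_n)\}\subset\mathcal{A}$ with $E(\varphi_n,\psi_n;s)\to\mu(s)$.

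The first main step will be a coercivity estimate: from \eqref{E_lower_bound}, using that $\tep$ is bounded below by a positive constant on each of $(-m,0)$ and $(0,\ell)$ (depending on $s$) and that $\td+P'(\rho_0)\rho_0\ge\min P'(\rho_0)\rho_0>0$, one reads off uniform $L^2$ bounds on the three combinations $\varphi_n'-\abs{\xi}\psi_n$, $\psi_n'-\abs{\xi}\varphi_n$, and $\psi_n'+\abs{\xi}\varphi_n$. Since $\rho_0$ is bounded below by a positive constant, the constraint $J(\varphi_n,\psi_n)=1$ also provides uniform $L^2$ bounds on $\varphi_n$ and $\psi_n$ themselves; the triangle inequality then upgrades these to $L^2$ bounds on $\varphi_n'$ and $\psi_n'$, so the sequence is bounded in $H_0^1((-m,\ell))\times H_0^1((-m,\ell))$.

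Next, I would extract (without relabelling) a subsequence with $(\varphi_n,\psi_n)\rightharpoonup(\varphi,\psi)$ weakly in $H_0^1\times H_0^1$. On a bounded one-dimensional interval, $H_0^1((-m,\ell))$ embeds compactly into $C^0([-m,\ell])$, so the convergence is uniform and in particular strong in $L^2$. This yields $\psi_n(0)\to\psi(0)$ and $J(\varphi_n,\psi_n)\to J(\varphi,\psi)=1$, so the candidate limit lies in $\mathcal{A}$.

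Finally, I would establish weak lower semicontinuity by writing $E = Q + R$, where
\[
Q(\varphi,\psi) = \hal\int_{-m}^\ell (\td+P'(\rho_0)\rho_0)(\psi'+\abs{\xi}\varphi)^2 + \hal\int_{-m}^\ell \tep\!\left((\varphi'-\abs{\xi}\psi)^2+(\psi'-\abs{\xi}\varphi)^2+\tfrac{1}{3}(\psi'+\abs{\xi}\varphi)^2\right) + \frac{\sigma\abs{\xi}^2}{2}(\psi(0))^2
\]
and $R(\varphi,\psi) = -2g\abs{\xi}\int_{-m}^\ell \rho_0\varphi\psi$. Then $Q$ is nonnegative, convex, and continuous on $H_0^1\times H_0^1$, hence weakly lower semicontinuous, while $R$ is weakly continuous because of the strong $L^2$ convergence. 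Together these give $E(\varphi,\psi;s)\le\liminf_n E(\varphi_n,\psi_n;s)=\mu(s)$, so $(\varphi,\psi)$ is the sought minimizer. The only mildly delicate point is the coercivity estimate, and once the completion of squares in \eqref{E_lower_bound} is in hand this reduces to routine bookkeeping.
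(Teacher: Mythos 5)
Your proposal is correct and follows essentially the same route as the paper: the direct method with a minimizing sequence, an $H_0^1\times H_0^1$ bound coming from the rearranged energy \eqref{E_lower_bound} together with the constraint $J=1$, the compact embedding of $H_0^1((-m,\ell))$ into $C^0$ to pass to the limit in $\psi_n(0)$ and in the constraint, and weak lower semicontinuity of the quadratic part. Your decomposition $E=Q+R$ simply makes explicit the coercivity and semicontinuity steps the paper leaves implicit (note only the harmless factor-of-two slip in $R$, which after distributing the $\tfrac{1}{2}$ should read $-g\abs{\xi}\int_{-m}^{\ell}\rho_0\varphi\psi$).
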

\begin{proof}
First note that  \eqref{E_lower_bound} shows that $E$ is bounded below on $\mathcal{A}$.  Let $(\varphi_n,\psi_n)\in\mathcal{A}$ be a minimizing sequence.  Then $\varphi_n$ and $\psi_n$ are bounded in $H^1_0((-m,\ell))$ and $\psi_n(0)$ is bounded in $\Rn{}$, so up to the extraction of a subsequence   $(\varphi_n,\psi_n) \rightharpoonup (\varphi,\psi)$ weakly in $H_0^1 \times H_0^1$, and $(\varphi_n,\psi_n) \rightarrow (\varphi,\psi)$ strongly in $L^2\times L^2$.   The compact embedding $H_0^1 \csubset H^{2/3} \hookrightarrow C^0$ implies that $\psi_n(0) \rightarrow \psi(0)$ as well.   Because of the quadratic structure of all the terms in the integrals defining $E$, weak lower semi-continuity and strong $L^2$ convergence imply that
\begin{equation}
 E(\varphi,\psi) \le \liminf_{n\rightarrow \infty} E(\varphi_n,\psi_n) = \inf_{\mathcal{A}} E.
\end{equation}
That $(\varphi,\psi)\in\mathcal{A}$ follows from the strong $L^2$ convergence.
\end{proof}

We now show that the minimizer constructed in the previous result satisfies Euler-Langrange equations equivalent to \eqref{s_coupled_1}--\eqref{s_coupled_2}.

\begin{prop}\label{e_l_eqns}
Let $(\varphi,\psi)\in \mathcal{A}$ be the minimizers of $E$ constructed in Proposition \ref{min_exist}.  Let $\mu := E(\varphi,\psi)$.  Then $(\varphi,\psi)$ are smooth when restricted to $(-m,0)$ or $(0,\ell)$ and satisfy
\begin{multline}\label{eigen_coupled_1}
\mu \rho_0 \varphi = -(\tep \varphi')' + \abs{\xi}^2  \left(4\tep/3 +   \td +  P'(\rho_0) \rho_0 \right) \varphi \\
+ \abs{\xi} \left[ \left(\td +\tep/3 + P'(\rho_0)\rho_0\right)\psi' +(\tep' - g\rho_0) \psi\right]  
\end{multline}
and 
\begin{multline}\label{eigen_coupled_2}
 \mu \rho_0 \psi =  -\left[ \left(  4\tep/3 + \td +  P'(\rho_0) \rho_0   \right) \psi'\right]' +   \tep \abs{\xi}^2  \psi \\
- \abs{\xi} \left[ \left(\left( \td +\tep/3 +  P'(\rho_0) \rho_0    \right) \varphi \right)'  + (g \rho_0-\tep' )\varphi \right]  
\end{multline}
along with the jump conditions
\begin{equation}
\jump{\varphi} = \jump{\psi}  =\jump{\tep (\varphi'-\abs{\xi} \psi)} = 0, 
\end{equation}
\begin{equation}
\jump{(\td+ \tep/3 + P'(\rho_0)\rho_0) (\psi' + \abs{\xi} \varphi  )}
+  \jump{\tep \left(\psi' - \abs{\xi} \varphi     \right)  }
  =\sigma \abs{\xi}^2 \psi(0).
\end{equation}
and the boundary conditions
$ \varphi(-m) =  \varphi(\ell) =  \psi(-m) =  \psi(\ell)=0.$
\end{prop}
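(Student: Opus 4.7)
The plan is to apply the standard Lagrange multiplier argument for a constrained minimization problem and then localize the resulting weak identity to extract the pointwise ODEs, interior smoothness, jump conditions, and boundary conditions in turn. Since $J$ is a coercive quadratic form on $H_0^1((-m,\ell)) \times H_0^1((-m,\ell))$, its derivative $DJ(\varphi,\psi)$ does not vanish on $\mathcal{A}$, so the Lagrange multiplier theorem produces some $\mu \in \mathbb{R}$ with
\begin{equation*}
DE(\varphi,\psi)(\phi,\chi) = \mu\, DJ(\varphi,\psi)(\phi,\chi)
\end{equation*}
for every test pair $(\phi,\chi) \in H_0^1((-m,\ell)) \times H_0^1((-m,\ell))$. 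Because $E$ and $J$ are quadratic, testing with $(\phi,\chi) = (\varphi,\psi)$ and invoking Euler's homogeneity identity immediately yields $2E(\varphi,\psi) = 2\mu J(\varphi,\psi) = 2\mu$, hence $\mu = E(\varphi,\psi)$ as asserted. I would then compute the first variations of the six quadratic integrands of $E$ along with the integrand of $J$ to write this weak identity explicitly.

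Next I would restrict to test pairs supported strictly inside $(-m,0)$ or $(0,\ell)$; this discards the surface tension term $\frac{\sigma\abs{\xi}^2}{2}(\psi(0))^2$ and all interfacial boundary contributions. On each half-interval the coefficients $\rho_0, \tep, \td, P'(\rho_0)\rho_0$ are smooth, and the principal symbols $\tep$ (for the $\varphi$-equation) and $4\tep/3 + \td + P'(\rho_0)\rho_0$ (for the $\psi$-equation) are bounded below by positive constants, so the weak identity is a uniformly elliptic coupled $2\times 2$ ODE system on each half-interval. A standard bootstrap of the weak equations yields $\varphi,\psi \in H^k$ on each half for every $k$, hence $C^\infty$, and integration by parts converts the weak identity into the classical ODEs \eqref{eigen_coupled_1}--\eqref{eigen_coupled_2}.

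With the pointwise equations established, I would return to the weak identity for an arbitrary test pair $(\phi,\chi) \in H_0^1 \times H_0^1$ with free values $\phi(0), \chi(0)$, and integrate by parts on $(-m,0)$ and $(0,\ell)$ separately. The interior bulk integrals cancel $\mu \int \rho_0(\varphi\phi + \psi\chi)$ by \eqref{eigen_coupled_1}--\eqref{eigen_coupled_2}, leaving only boundary contributions at $x_3=0$. The coefficient of $\phi(0)$ reads $-\jump{\tep(\varphi' - \abs{\xi}\psi)}$, which must vanish since no $\phi(0)$-linear term appears in $E$; the coefficient of $\chi(0)$, after grouping, must cancel the contribution $\sigma\abs{\xi}^2 \psi(0) \chi(0)$ from the surface tension variation, producing exactly the second stated jump condition. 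The boundary conditions at $x_3 = -m,\ell$ are built into the function space.

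The main obstacle is simply the careful bookkeeping in the integration by parts: the cross terms in $E$ mix $\varphi'$ with $\psi$ and $\psi'$ with $\varphi$, the coefficients $\tep$ and $\td$ are generally discontinuous across $\{x_3 = 0\}$, and the interfacial boundary contributions must be regrouped into the natural ``stress'' combinations $\tep(\varphi' - \abs{\xi}\psi)$, $(\td + \tep/3 + P'(\rho_0)\rho_0)(\psi' + \abs{\xi}\varphi)$, and $\tep(\psi' - \abs{\xi}\varphi)$ before the stated jump conditions become recognizable.
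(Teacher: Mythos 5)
Your proposal is correct and follows essentially the same route as the paper: derive the weak Euler--Lagrange identity with multiplier $\mu$, localize with test functions supported in $(-m,0)$ or $(0,\ell)$ to get the interior equations and bootstrap smoothness, then integrate by parts with general test pairs to read off the jump conditions at $x_3=0$, with the endpoint conditions coming from $H_0^1$. The only cosmetic difference is that you invoke the abstract Lagrange multiplier theorem and identify $\mu = E(\varphi,\psi)$ via quadratic homogeneity, whereas the paper constructs the admissible variations by hand (solving $J=1$ for $\tau(t)$ via the implicit function theorem), which yields the same identity with $\mu=E(\varphi,\psi)$ directly.
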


\begin{proof}
 Fix $(\varphi_0,\psi_0)\in H_0^1((-m,\ell))\times H^1_0((-m,\ell))$.   Define
\begin{equation}
j(t,\tau)= J(\varphi+t\varphi_0 + \tau \varphi,\psi + t\psi_0 + \tau \psi) 
\end{equation}
and note that $ j(0,0) = 1$.  Moreover, $j$ is smooth,
\begin{equation}
 \frac{\partial j}{\partial t}(0,0) = \int_{-m}^\ell   \rho_0  ( \varphi_0 \varphi  + \psi_0 \psi   ), \text{ and }
 \frac{\partial j}{\partial \tau}(0,0) = \int_{-m}^\ell   \rho_0  (\varphi^2 + \psi^2)  =2.
\end{equation}
So, by the inverse function theorem, we can solve for $\tau = \tau(t)$ in a neighborhood of $0$ as a $C^1$ function of $t$ so that $\tau(0)=0$ and $j(t,\tau(t))=1$.  We may differentiate the last equation to find
\begin{equation}
 \frac{\partial j}{\partial t}(0,0) + \frac{\partial j}{\partial \tau}(0,0) \tau'(0) = 0,
\end{equation}
and hence that
\begin{equation}
 \tau'(0) = -\hal \frac{\partial j}{\partial t}(0,0) = -\hal \int_{-m}^\ell   \rho_0  ( \varphi_0 \varphi  + \psi_0 \psi ).
\end{equation}

Since $(\varphi,\psi)$ are minimizers over $\mathcal{A}$,  we may make variations with respect to $(\varphi_0,\psi_0)$ to find that
\begin{equation}
 0 = \left. \frac{d}{dt}\right\vert_{t=0} E(\varphi +t\varphi_0 + \tau(t) \varphi,\psi+t\psi_0 + \tau(t) \psi),
\end{equation}
which implies that 
\begin{multline}
0= \sigma \abs{\xi}^2 \psi(0)(\psi_0(0)+\tau'(0)\psi(0)) \\
+ \int_{-m}^\ell (\td + \tep/3+ P'(\rho_0) \rho_0) (\psi'+\abs{\xi} \varphi)(\psi_0' + \tau'(0) \psi' + \abs{\xi} \varphi_0 + \abs{\xi} \tau'(0) \varphi) \\
- \int_{-m}^\ell g \abs{\xi} \rho_0 (\psi(\varphi_0 + \tau'(0) \varphi) + \varphi(\psi_0 + \tau'(0) \psi)    ) \\
+ \int_{-m}^\ell \tep(\psi' - \abs{\xi} \varphi) (\psi_0' + \tau'(0) \psi' - \abs{\xi} \varphi_0 - \abs{\xi} \tau'(0) \varphi)    \\
+ \int_{-m}^\ell \tep(\varphi' - \abs{\xi} \psi)(\varphi_0' + \tau'(0) \varphi' - \abs{\xi} \psi_0 - \abs{\xi} \tau'(0) \psi)   .
\end{multline}
Rearranging and plugging in the value of $\tau'(0)$, we may rewrite this equation as
\begin{multline}\label{eigenvalue_form}
\sigma \abs{\xi}^2 \psi(0)\psi_0(0) + \int_{-m}^\ell (\td+\tep/3+P'(\rho_0) \rho_0) (\psi'+\abs{\xi} \varphi) ( \psi_0' + \abs{\xi} \varphi_0) - g \abs{\xi} \rho_0(\psi \varphi_0 + \varphi \psi_0) \\
+ \int_{-m}^\ell   \tep \left( (\psi' - \abs{\xi} \varphi) (\psi_0' - \abs{\xi} \varphi_0) + (\varphi' - \abs{\xi} \psi)(\varphi_0' - \abs{\xi}\psi_0) \right)
  = \mu \int_{-m}^\ell   \rho_0  ( \varphi_0 \varphi  + \psi_0 \psi )
\end{multline}
where the Lagrange multiplier (eigenvalue) is $\mu = E(\varphi,\psi).$  Since $\varphi_0$ and $\psi_0$ are independent, this gives rise to the pair of equations 
\begin{multline}\label{e_l_e_1}
\int_{-m}^\ell \tep \varphi' \varphi_0' 
+    \left(\td+4\tep/3+P'(\rho_0) \rho_0\right) \abs{\xi}^2 \varphi \varphi_0 
- \int_{-m}^\ell   \left( \tep \abs{\xi} \psi \varphi_0 \right)' \\
+\abs{\xi} \int_{-m}^\ell \left[   \left(\td+\tep/3+P'(\rho_0) \rho_0\right) \psi'  +(\tep'- g  \rho_0) \psi \right] \varphi_0 
  = \mu \int_{-m}^\ell   \rho_0   \varphi \varphi_0  
 \end{multline}
and
\begin{multline}\label{e_l_e_2}
\sigma \abs{\xi}^2 \psi(0) \psi_0(0) 
+ \int_{-m}^\ell \left[ \left(\td + 4\tep/3+P'(\rho_0) \rho_0 \right) \psi'  + \left(\td+\tep/3+P'(\rho_0) \rho_0 \right)     \abs{\xi} \varphi   \right] \psi_0'  \\
- \int_{-m}^\ell   \left( \tep \abs{\xi} \varphi \psi_0 \right)' 
+ \int_{-m}^\ell  \left[ \tep \abs{\xi}^2 \psi   + (\tep'   - g  \rho_0) \abs{\xi} \varphi  \right]   \psi_0
  = \mu \int_{-m}^\ell   \rho_0 \psi   \psi_0.
\end{multline}

By making variations with $\varphi_0, \psi_0$ compactly supported in either $(-m,0)$ or $(0,\ell)$, we find that $\varphi$ and $\psi$ satisfy the equations \eqref{eigen_coupled_1}--\eqref{eigen_coupled_2} in a weak sense in $(-m,0)$ and $(0,\ell)$.  Standard bootstrapping arguments then show that $(\varphi,\psi)$ are in $H^k((-m,0))$ (resp. $H^k((0,\ell))$)  for all $k\ge 0$ when restricted to $(-m,0)$ (resp. $(0,\ell)$), and hence the functions are smooth when restricted to either interval.  This implies that the equations are also classically satisfied on $(-m,0)$ and $(0,\ell)$.  Since $(\varphi,\psi)\in H^2$, the traces of the functions and their derivatives are well-defined at the endpoints $x_3 = -m,0,\ell$.  To show that the jump conditions are satisfied we make variations with respect to arbitrary $\varphi_0$, $\psi_0 \in C_c^\infty((-m,\ell))$.  Integrating  the terms in \eqref{e_l_e_1} with derivatives of $\varphi_0$ by parts and using that $\varphi$ solves \eqref{eigen_coupled_1} on $(-m,0)$ and $(0,\ell)$, we find that
\begin{equation}
\jump{\tep (\varphi'-\abs{\xi} \psi)} \varphi_0(0) = 0.  
\end{equation}
Since $\varphi_0(0)$ may be chosen arbitrarily, we deduce the jump condition $
\jump{\tep (\varphi'-\abs{\xi} \psi)}  = 0$.  Performing a similar integration by parts in \eqref{e_l_e_2} yields the jump condition
\begin{equation}
0=  \sigma \abs{\xi}^2 \psi(0)   - \jump{(\td+\tep/3+P'(\rho_0) \rho_0) (\psi'+\abs{\xi} \varphi)}   - \jump{\tep  (\psi' - \abs{\xi} \varphi)}.
 \end{equation}
The conditions $\jump{\varphi}=\jump{\psi }=0$  and $\varphi(-m) = \varphi(\ell) = \psi(-m) = \psi(\ell) = 0$ are satisfied trivially since $\varphi, \psi \in H_0^1((-m,\ell)) \hookrightarrow C_0^{0,1/2}((-m,\ell))$.  

\end{proof}

We now show that for $s$ sufficiently small, the infimum of $E$ over $\mathcal{A}$ is in fact negative.

\begin{prop}\label{neg_inf}
Suppose that  $0<\abs{\xi}^2 < g\jump{\rho_0}/\sigma$.  Then there exists $s_0>0$  depending on the quantities  $\rho_0^{\pm}, P_{\pm}, g, \ep_\pm, \sigma, m, \ell, \abs{\xi}$ so that for $s \le s_0$ it holds that $\mu(s)  < 0.$
\end{prop}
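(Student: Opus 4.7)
The proof is a continuity argument in $s$. Writing $\tep = s\ep_0$ and $\td = s\delta_0$ displays the energy in \eqref{E_def} as an affine function
\[
E(\varphi,\psi;s) = E_0(\varphi,\psi) + s\,V(\varphi,\psi),
\]
where $E_0$ is the inviscid ($s=0$) energy obtained by dropping every $\tep$- and $\td$-term, and $V(\varphi,\psi)\ge 0$ collects the squared terms weighted by $\ep_0$ and $\delta_0$. Since $V$ is finite on any smooth, compactly supported test pair, it suffices to exhibit a single $(\varphi_*,\psi_*)\in\mathcal{A}$ with $E_0(\varphi_*,\psi_*) < 0$: then setting $s_0 := \abs{E_0(\varphi_*,\psi_*)}/(1+V(\varphi_*,\psi_*))$ guarantees $E(\varphi_*,\psi_*;s) < 0$ for all $0 < s \le s_0$, so $\mu(s) \le E(\varphi_*,\psi_*;s) < 0$.

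To produce the inviscid trial state I impose the incompressibility-type relation $\varphi_* = -\psi_*'/\abs{\xi}$ with $\psi_*$ a smooth bump satisfying $\psi_*(0)=1$ and $\supp \psi_* \subset (-\delta,\delta)\subset (-m,\ell)$ for some small $\delta$. Both functions then automatically lie in $H^1_0$, and the identity $\psi_*' + \abs{\xi}\varphi_* \equiv 0$ annihilates the pressure-type integral in $E_0$. Since $\psi_*$ is smooth at $x_3 = 0$ but $\rho_0$ jumps there, integrating by parts on $(-m,0)$ and $(0,\ell)$ separately and invoking the steady-state identity $\rho_0' = -g\rho_0/P'(\rho_0)$ produce
\[
-2g\abs{\xi}\int_{-m}^{\ell}\rho_0\varphi_*\psi_* \;=\; 2g\int_{-m}^{\ell}\rho_0\psi_*'\psi_* \;=\; -g\jump{\rho_0}\psi_*(0)^2 + g^2\int_{-m}^{\ell}\frac{\rho_0}{P'(\rho_0)}\psi_*^2,
\]
so that
\[
E_0(\varphi_*,\psi_*) \;=\; \hal\bigl(\sigma\abs{\xi}^2 - 2g\jump{\rho_0}\bigr)\psi_*(0)^2 + g^2\int_{-m}^{\ell}\frac{\rho_0}{P'(\rho_0)}\psi_*^2.
\]
The hypothesis $\abs{\xi}^2 < g\jump{\rho_0}/\sigma$ makes the bracketed coefficient strictly less than $-g\jump{\rho_0}$, furnishing a definite $O(1)$ negative contribution (recall $\psi_*(0)=1$); meanwhile the positive residual is $O(\delta)$ since $\psi_*^2$ has $L^1$-norm at most $2\delta$ and $1/P'$ is bounded on the steady-state range of $\rho_0$. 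Shrinking $\delta$ therefore makes $E_0(\varphi_*,\psi_*) < 0$, and homogeneity in $(\varphi,\psi)$ allows a rescaling to restore $J(\varphi_*,\psi_*)=1$ without affecting the sign.

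The essential mechanism is thus the classical inviscid Rayleigh-Taylor-with-surface-tension spectral threshold, which enters precisely through the jump term in the integration by parts; compressibility contributes only the bounded positive correction $g^2\int \rho_0\psi_*^2/P'(\rho_0)$, which is tamed by localization. The one subtlety worth flagging is the care needed in splitting the integration by parts across the density discontinuity at $x_3 = 0$, but this is straightforward since $\psi_*$ is smooth and continuous across the interface. Once $E_0(\varphi_*,\psi_*)<0$ is in hand the perturbative passage from $s=0$ to small $s>0$ is purely mechanical, and yields the claimed $s_0$ with the dependencies listed in the statement.
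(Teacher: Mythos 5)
Your approach matches the paper's in every essential respect: set $\varphi=-\psi'/|\xi|$ to kill the pressure integral, use the steady-state ODE to integrate $\int g\rho_0\psi\psi'$ by parts across the density jump, observe that the boundary term furnishes the destabilizing $-\frac{g\jump{\rho_0}}{2}\psi(0)^2$, and then localize $\psi$ near $x_3=0$ to suppress the positive compressibility residual. (The paper uses a one-parameter family $\psi_\alpha$ that concentrates as $\alpha\to\infty$ rather than an explicit bump supported in $(-\delta,\delta)$, but the mechanism is identical.) Your final decomposition $E=E_0+sV$ with $s_0=|E_0|/(1+V)$ is also the same step the paper performs when it writes $\tilde{E}(\psi_\alpha)\le\frac{\sigma|\xi|^2-g\jump{\rho_0}}{2}+o_\alpha(1)+sC$.

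There is, however, an arithmetic slip in your final formula for $E_0$. The entire integral in \eqref{E_def}, including the cross term $-2g\rho_0|\xi|\varphi\psi$, carries the prefactor $\frac{1}{2}$, so after substituting $\varphi_*=-\psi_*'/|\xi|$ one gets $\frac{1}{2}\int(-2g|\xi|\rho_0\varphi_*\psi_*)=\int g\rho_0\psi_*'\psi_*$, which equals $-\frac{g\jump{\rho_0}}{2}\psi_*(0)^2+\frac{g^2}{2}\int\rho_0\psi_*^2/P'(\rho_0)$ by \eqref{n_i_1}. Thus the correct identity is
\begin{equation*}
E_0(\varphi_*,\psi_*)=\frac{\sigma|\xi|^2-g\jump{\rho_0}}{2}\,\psi_*(0)^2+\frac{g^2}{2}\int_{-m}^{\ell}\frac{\rho_0}{P'(\rho_0)}\psi_*^2,
\end{equation*}
not the $\frac{1}{2}(\sigma|\xi|^2-2g\jump{\rho_0})\psi_*(0)^2+g^2\int\rho_0\psi_*^2/P'(\rho_0)$ you wrote — you applied the overall $\frac{1}{2}$ only to the cross term's coefficient in the display but dropped it when substituting the integrated-by-parts expression. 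The slip does not damage the argument, since the hypothesis $\sigma|\xi|^2<g\jump{\rho_0}$ still makes the leading coefficient strictly negative and the localization argument goes through unchanged, but you should correct it.
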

\begin{proof}
 Since both $E$ and $J$ are homogeneous of degree $2$ it suffices to show that 
\begin{equation}
 \inf_{(\varphi,\psi)\in H_0^1 \times H_0^1} \frac{E(\varphi,\psi)}{J(\varphi,\psi)} < 0,
\end{equation}
but since $J$ is positive definite, we may reduce to constructing any pair $(\varphi,\psi)\in H_0^1 \times H_0^1$ such that $E(\varphi,\psi) <0$.  We will  assume that $\varphi = -\psi'/\abs{\xi}$ so that the first integrand term in $E(\varphi,\psi)$ vanishes.  We must then construct $\psi \in H_0^2$ so that 
\begin{equation}
\tilde{E}(\psi) := E(-\psi'/\abs{\xi},\psi) = \frac{\sigma \abs{\xi}^2}{2}(\psi(0))^2 + \int_{-m}^\ell g \rho_0 \psi \psi' + \frac{\tep}{2} \left( \left( \frac{\psi''}{\abs{\xi}} + \abs{\xi} \psi  \right)^2  + 4(\psi')^2 \right)  <0. 
\end{equation}

We employ the identity $\psi \psi' = (\psi^2)'/2$, an integration by parts, and the fact that $\rho_0$ solves \eqref{steady_state} to write
\begin{multline}\label{n_i_1}
 \int_{-m}^\ell g \rho_0 \psi \psi' = \left[\frac{g \rho_0 \psi^2 }{2}  \right]_{0}^{\ell} - \hal \int_{0}^\ell g \rho_0' \psi^2 + \left[\frac{g \rho_0 \psi^2 }{2} \right]_{-m}^{0} - \hal \int_{-m}^0 g \rho_0' \psi^2 \\
=  -\frac{g (\psi(0))^2}{2} \jump{\rho_0} + \frac{g^2}{2} \int_{-m}^\ell \frac{\rho_0}{P'(\rho_0)} \psi^2.
\end{multline}
Notice that $\jump{\rho_0}= \rho^+_0 - \rho^-_0 >0$ so that the right hand side is not positive definite.

For $\alpha \ge 5$ we define the test function $\psi_\alpha \in H_0^2((-m,\ell))$ according to 
\begin{equation}
 \psi_\alpha(x_3) = 
\begin{cases}
  \left(1-\frac{x_3^2}{\ell^2} \right)^{\alpha/2}, & x_3 \in [0,\ell) \\
  \left(1+\frac{x_3^2}{m^2} \right)^{\alpha/2}, &  x_3\in (-m,0). 
\end{cases}
\end{equation}
Simple calculations then show that
\begin{equation}
 \int_{-m}^\ell (\psi_\alpha)^2 = \frac{\sqrt{\pi}(m+\ell) \Gamma(\alpha +1 )}{2 \Gamma(\alpha +3/2)} = o_\alpha(1),
\end{equation}
where $o_\alpha(1)$ is a quantity that vanishes as $\alpha \rightarrow \infty$, and that 
\begin{equation}
\int_{-m}^\ell \left( \left( \frac{\psi''}{\abs{\xi}} + \abs{\xi} \psi  \right)^2  + 4(\psi')^2 \right) \le C
\end{equation}
for a constant $C$ depending on $\alpha, m, \ell, \abs{\xi}$.  Combining these, we find that
\begin{equation}
 \tilde{E}(\psi_\alpha) \le \frac{\sigma \abs{\xi}^2- g\jump{\rho_0}}{2}   + o_\alpha(1) + s C
\end{equation}
for a constant $C$ depending on $\alpha$ as well as  $\rho_0^{\pm}, P_{\pm}, g, \ep_\pm, m, \ell, \abs{\xi}$.  Since $\sigma \abs{\xi}^2 < g \jump{\rho_0}$, we may then fix $\alpha$ sufficiently large so that the first two terms sum to something strictly negative.  Then there exists  $s_0>0$ depending on the various parameters so that for $s\le s_0$ it holds that $\tilde{E}(\psi_\alpha) < 0$, thereby proving the result.
\end{proof}

\begin{remark}\label{neg_inf_remark}
A simple extension of this argument yields a more quantitative bound that holds not only for $\abs{\xi}$ fixed, but also uniformly over intervals $0<a \le \abs{\xi}^2 \le b <g\jump{\rho_0}/\sigma$.  More precisely, there exist two constants $C_0, C_1>0$ depending on the parameters $\rho_0^{\pm},$ $P_{\pm},$ $g,$ $\ep_\pm,$ $\sigma,$ $m,$ $\ell,$ $a,$ $b$ so that $\mu(s) \le -C_0 + s C_1$ for all $\abs{\xi}^2 \in[a,b]$.
\end{remark}

The key to the argument presented in this Proposition was constructing a pair $(\varphi,\psi)$ so that
\begin{equation}
 \frac{\sigma \abs{\xi}^2 - g\jump{\rho_0}}{2} (\psi(0))^2 <0,
\end{equation}
which in particular required that $\psi(0)\neq 0$ and $\abs{\xi}^2 < g\jump{\rho_0}/\sigma$.  We can show that these properties are satisfied by the actual minimizers when $E(\varphi,\psi) <0$.

\begin{lem}\label{non_zero_origin}
 Suppose that $(\varphi,\psi)\in \mathcal{A}$ satisfy $E(\varphi,\psi)<0$.  Then $\psi(0)\neq 0$ and $\abs{\xi}^2 < g\jump{\rho_0}/\sigma$.
\end{lem}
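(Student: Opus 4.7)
My plan is to rewrite $E(\varphi,\psi)$ as a single possibly-negative boundary term of the form $(\sigma\abs{\xi}^2 - g\jump{\rho_0})\psi(0)^2/2$ plus terms that are manifestly non-negative. Both conclusions will then follow simultaneously by contradiction.

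First I would handle the indefinite cross term $-2g\rho_0\abs{\xi}\varphi\psi$ appearing inside $E$. Setting $r := \psi' + \abs{\xi}\varphi$ (the combination already squared in $E$), I write $\abs{\xi}\varphi = r - \psi'$ to obtain
\begin{equation*}
-2g\rho_0\abs{\xi}\varphi\psi = 2g\rho_0 \psi\psi' - 2g\rho_0 r\psi.
\end{equation*}
The piece $g\rho_0\psi\psi' = \tfrac{g}{2}\rho_0(\psi^2)'$ is then integrated by parts across the discontinuity at $x_3 = 0$, using $\psi(-m) = \psi(\ell) = 0$ together with the steady-state ODE $P'(\rho_0)\rho_0' = -g\rho_0$ to convert $\rho_0'$ into $-g\rho_0/P'(\rho_0)$. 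This is exactly the computation carried out in \eqref{n_i_1} of the proof of Proposition \ref{neg_inf}, and it yields
\begin{equation*}
\int_{-m}^\ell g\rho_0\psi\psi' \,dx_3 = -\frac{g}{2}\jump{\rho_0}\psi(0)^2 + \frac{g^2}{2}\int_{-m}^\ell \frac{\rho_0}{P'(\rho_0)}\psi^2 \,dx_3.
\end{equation*}

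Second I would absorb the remaining cross term $-g\rho_0 r\psi$ into the quadratic $\tfrac{1}{2}(\td + P'(\rho_0)\rho_0)r^2$ by completing the square in $r$, which produces
\begin{equation*}
\hal(\td + P'(\rho_0)\rho_0)\left(r - \frac{g\rho_0\psi}{\td + P'(\rho_0)\rho_0}\right)^2 - \frac{g^2 \rho_0^2 \psi^2}{2(\td + P'(\rho_0)\rho_0)}.
\end{equation*}
Combining the two interior $\psi^2$ contributions gives the coefficient
\begin{equation*}
\frac{g^2 \rho_0}{2}\left(\frac{1}{P'(\rho_0)} - \frac{\rho_0}{\td + P'(\rho_0)\rho_0}\right) = \frac{g^2 \rho_0 \td}{2 P'(\rho_0)(\td + P'(\rho_0)\rho_0)} \ge 0,
\end{equation*}
so that in total
\begin{multline*}
E(\varphi,\psi) = \frac{\sigma\abs{\xi}^2 - g\jump{\rho_0}}{2}\psi(0)^2 + \int_{-m}^\ell \frac{g^2 \rho_0 \td}{2 P'(\rho_0)(\td + P'(\rho_0)\rho_0)}\psi^2 \\
+ \hal\int_{-m}^\ell (\td + P'(\rho_0)\rho_0)\left(\psi' + \abs{\xi}\varphi - \frac{g\rho_0\psi}{\td + P'(\rho_0)\rho_0}\right)^2 \\
+ \hal\int_{-m}^\ell \tep\left((\varphi' - \abs{\xi}\psi)^2 + (\psi' - \abs{\xi}\varphi)^2 + \tfrac{1}{3}(\psi' + \abs{\xi}\varphi)^2\right).
\end{multline*}

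Every term on the right-hand side except the first is non-negative. Therefore the hypothesis $E(\varphi,\psi) < 0$ forces $(\sigma\abs{\xi}^2 - g\jump{\rho_0})\psi(0)^2 < 0$, which at once yields $\psi(0) \neq 0$ and $\sigma\abs{\xi}^2 < g\jump{\rho_0}$, i.e.\ $\abs{\xi}^2 < g\jump{\rho_0}/\sigma$ (with the second inequality vacuous in the case $\sigma = 0$). No step seems to present a genuine obstacle; the only point requiring care is the jump contribution in the integration by parts of $g\rho_0(\psi^2)'/2$, since $\rho_0$ is discontinuous at $x_3 = 0$ while $\psi$ is continuous there and vanishes at $x_3 = -m, \ell$—this is the step already carried out in \eqref{n_i_1} that I would cite directly.
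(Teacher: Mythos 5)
Your proof is correct and follows essentially the same route as the paper: complete the square to isolate the indefinite cross term, integrate $g\rho_0\psi\psi'$ by parts across $x_3=0$ using the steady-state relation (exactly \eqref{n_i_1}), and conclude from the resulting decomposition of $E$ into the signed boundary term $\tfrac{1}{2}(\sigma\abs{\xi}^2-g\jump{\rho_0})\psi(0)^2$ plus non-negative integrals. The only difference is cosmetic: you complete the square with the full weight $\td+P'(\rho_0)\rho_0$, producing an extra non-negative interior term proportional to $\td$, whereas the paper in \eqref{n_z_o_1} completes the square with weight $P'(\rho_0)\rho_0$ alone and keeps $\td(\psi'+\abs{\xi}\varphi)^2$ untouched.
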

\begin{proof}
A completion of the square allows us to write 
\begin{multline}
 P'(\rho_0)\rho_0 (\psi' + \abs{\xi} \varphi)^2 - 2 g \rho_0 \abs{\xi} \psi \varphi \\
= \left(\sqrt{P'(\rho_0)\rho_0}(\psi'+\abs{\xi} \varphi) - \frac{g\sqrt{\rho_0}}{\sqrt{P'(\rho_0)}}  \psi  \right)^2  + 2 g \rho_0 \psi \psi' - \frac{g^2 \rho_0}{P'(\rho_0)}  \psi^2.
\end{multline}
Integrating by parts as in \eqref{n_i_1}, we know that
\begin{equation}
  \int_{-m}^\ell 2g \rho_0 \psi \psi' -  \frac{g^2 \rho_0}{P'(\rho_0)} \psi^2=  -g  \jump{\rho_0} (\psi(0))^2.
\end{equation}
Combining these equalities, we can rewrite $E(\varphi,\psi)$ as 
\begin{multline}\label{n_z_o_1}
 E(\varphi,\psi) = \hal \int_{-m}^\ell (\td + \tep/3)(\psi'+\abs{\xi} \varphi)^2  + P'(\rho_0) \rho_0 \left((\psi'+\abs{\xi} \varphi) - \frac{g}{P'(\rho_0)}  \psi  \right)^2  \\
+  \hal\int_{-m}^\ell  \tep (  (\varphi'- \abs{\xi} \psi)^2 + (\psi'-\abs{\xi} \varphi)^2 )  +\frac{\sigma \abs{\xi}^2 - g\jump{\rho_0}}{2}(\psi(0))^2.
\end{multline}
From the non-negativity of the integrals, we deduce that if $E(\varphi,\psi)<0$, then $\psi(0) \neq 0$ and $\abs{\xi}^2 < g \jump{\rho_0}/\sigma$.
\end{proof}

The next result establishes continuity and monotonicity properties of the eigenvalue $\mu(s)$.

\begin{prop}\label{eigen_lip}
Let $\mu:(0,\infty)  \rightarrow \Rn{}$ be given by \eqref{mu_def}.
Then the following hold.
\begin{enumerate}
 \item $\mu \in C^{0,1}_{loc}((0,\infty))$, and in particular $\mu \in C^{0}((0,\infty))$.

\item There exists a positive constant $C_2 = C_2(\rho_0^{\pm}, P_{\pm}, g, \ep_\pm, \sigma, m, \ell)$ so that  
\begin{equation}\label{e_l_00}
 \mu(s) \ge -g\abs{\xi} + s C_2.
\end{equation}
 
\item $\mu(s)$ is strictly increasing.
\end{enumerate}

\end{prop}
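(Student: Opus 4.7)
The plan is to exploit the affine dependence of $E$ on $s$. Writing
\[
V(\varphi,\psi) := \hal \int_{-m}^\ell \delta_0(\psi' + \abs{\xi}\varphi)^2 + \ep_0\left[(\varphi' - \abs{\xi}\psi)^2 + (\psi' - \abs{\xi}\varphi)^2 + \tfrac{1}{3}(\psi' + \abs{\xi}\varphi)^2\right] \ge 0
\]
for the coefficient of $s$ in $E$ (recall $\tep = s\ep_0$ and $\td = s\delta_0$), and $E_0(\varphi,\psi)$ for the remaining, $s$-independent contributions, we have the decomposition $E(\varphi,\psi;s) = E_0(\varphi,\psi) + s V(\varphi,\psi)$. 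The reformulation \eqref{E_lower_bound} together with $J(\varphi,\psi)=1$ on $\mathcal{A}$ shows that $E_0(\varphi,\psi) \ge -g\abs{\xi}$.

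The crux of the argument is the coercivity estimate
\[
C_0 := \inf_{(\varphi,\psi) \in \mathcal{A}} V(\varphi,\psi) > 0.
\]
Since $\ep_0$ is bounded below by a positive constant on $(-m,\ell)$, it suffices to argue that no minimizing sequence for $V$ can remain in $\mathcal{A}$. If $(\varphi_n,\psi_n) \in \mathcal{A}$ had $V(\varphi_n,\psi_n) \to 0$, then both $(\psi_n' + \abs{\xi}\varphi_n)$ and $(\psi_n' - \abs{\xi}\varphi_n)$ would tend to $0$ in $L^2$; summing and subtracting gives $\psi_n' \to 0$ and, since $\abs{\xi}>0$ is fixed, $\varphi_n \to 0$ in $L^2$. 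The Poincaré inequality applied to $\psi_n \in H_0^1((-m,\ell))$ then forces $\psi_n \to 0$ in $L^2$ as well, whence $J(\varphi_n,\psi_n) \to 0$, contradicting $J \equiv 1$. This is the main technical obstacle: the combinations $\psi' \pm \abs{\xi}\varphi$ and $\varphi' - \abs{\xi}\psi$ mix first- and zero-order terms, so one cannot control $\varphi$ or $\psi$ individually by a naive Poincaré estimate; the sum-and-difference identity, coupled with the Dirichlet condition on $\psi$, is what unlocks the bound.

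Granted $C_0 > 0$, the three conclusions follow immediately. For (2), $E(\varphi,\psi;s) \ge -g\abs{\xi} + sC_0$ for every $(\varphi,\psi)\in\mathcal{A}$, so passing to the infimum gives the claim with $C_2 = C_0$. For (3), given $s_1 < s_2$ and a minimizer $(\varphi_2,\psi_2)$ at $s_2$,
\[
\mu(s_1) \le E(\varphi_2,\psi_2;s_1) = \mu(s_2) - (s_2-s_1)V(\varphi_2,\psi_2) \le \mu(s_2) - (s_2-s_1)C_0 < \mu(s_2).
\]
For (1), fix a compact interval $[a,b] \subset (0,\infty)$, and for any $s\in[a,b]$ let $(\varphi_s,\psi_s)$ be a minimizer; the bound $E_0 \ge -g\abs{\xi}$ yields
\[
s V(\varphi_s,\psi_s) = \mu(s) - E_0(\varphi_s,\psi_s) \le \mu(b) + g\abs{\xi},
\]
hence $V(\varphi_s,\psi_s) \le M := (\mu(b) + g\abs{\xi})/a$. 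The two-sided comparisons
\[
\mu(s) \le \mu(s_0) + (s-s_0)V(\varphi_{s_0},\psi_{s_0}), \qquad \mu(s_0) \le \mu(s) + (s_0-s)V(\varphi_s,\psi_s),
\]
valid for all $s,s_0\in[a,b]$, then produce $\abs{\mu(s) - \mu(s_0)} \le M\abs{s-s_0}$, which is the desired local Lipschitz bound.
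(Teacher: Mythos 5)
Your proof is correct and follows essentially the same route as the paper: the same decomposition $E(\cdot;s)=E_0+sE_1$ with $E_0\ge -g\abs{\xi}$ on $\mathcal{A}$, the same two-sided comparison of minimizers for the local Lipschitz bound, and the same reduction of parts (2) and (3) to the positivity of $\inf_{\mathcal{A}}E_1$. The only differences are cosmetic: you spell out the positivity of that infimum (sum/difference of $\psi'\pm\abs{\xi}\varphi$ plus Poincar\'e), which the paper dismisses as ``a simple matter,'' and you thereby obtain the slightly stronger quantitative monotonicity $\mu(s_2)-\mu(s_1)\ge (s_2-s_1)C_0$.
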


\begin{proof}
Fix a compact interval $Q = [a,b]  \csubset (0,\infty)$, and fix any pair $(\varphi_0,\psi_0)\in \mathcal{A}$.  We may decompose $E$ according to 
\begin{equation}\label{e_l_1}
E(\varphi,\psi;s) = E_0(\varphi,\psi) + s E_1(\varphi,\psi)  
\end{equation}
for 
\begin{equation}
 E_0(\varphi,\psi) := \frac{\sigma \abs{\xi}^2}{2}(\psi(0))^2 + \hal \int_{-m}^\ell P'(\rho_0) \rho_0 (\psi'+\abs{\xi} \varphi)^2 - 2 g\abs{\xi} \rho_0 \varphi \psi
\end{equation}
and
\begin{equation}
 E_1(\varphi,\psi) := \hal \int_{-m}^\ell (\delta_0+ \ep_0/3) (\psi'+\abs{\xi} \varphi)^2 + \ep_0 \left((\varphi'- \abs{\xi} \psi)^2 + (\psi'-\abs{\xi} \varphi)^2 \right) \ge 0.
\end{equation}
The non-negativity of $E_1$ implies that $E$ is non-decreasing in $s$  with $(\varphi,\psi)\in \mathcal{A}$ kept fixed.  
 
Now, by Proposition \ref{min_exist}, for each $s \in (0,\infty) $ we can find a pair $(\varphi_s, \psi_s)\in\mathcal{A}$ so that
\begin{equation}
 E(\varphi_s, \psi_s; s) = \inf_{(\varphi,\psi)\in \mathcal{A}} E(\varphi,\psi; s) = \mu(s).
\end{equation}
We deduce from the non-negativity of $E_1$, the minimality of $(\varphi_s, \psi_s)$, and the equality \eqref{E_lower_bound} that
\begin{equation}
 E(\varphi_0,\psi_0;b) \ge E(\varphi_0,\psi_0;s) \ge  E(\varphi_s, \psi_s;s) \ge  s E_1(\varphi_s, \psi_s) - g \abs{\xi}
\end{equation}
for all $s \in Q$.  This implies that there exists a constant $0<K = K(a,b, \varphi_0, \psi_0, g, \abs{\xi}) < \infty$ so that
\begin{equation}\label{e_l_2}
 \sup_{s \in Q} E_1(\varphi_s,\psi_s)   \le K.
\end{equation}

Let $s_i   \in Q$ for $i=1,2$.  Using the minimality of $(\varphi_{s_1},\psi_{s_1})$ compared to $(\varphi_{s_2}, \psi_{s_2})$, we know that
\begin{equation}
 \mu(s_1) = E(\varphi_{s_1},\psi_{s_1};s_1) \le E(\varphi_{s_2},\psi_{s_2};s_1),
\end{equation}
but from our decomposition \eqref{e_l_1}, we may bound
\begin{multline}
E(\varphi_{s_2}, \psi_{s_2};s_1) 
\le  E(\varphi_{s_2},\psi_{s_2};s_2) + \abs{s_1 - s_2} E_1(\varphi_{s_2}, \psi_{s_2})   \\
= \mu(s_2) + \abs{s_1 - s_2} E_1(\varphi_{s_2}, \psi_{s_2}) . 
\end{multline}
Chaining these two inequalities together and employing \eqref{e_l_2}, we find that
\begin{equation}
\mu(s_1)
\le \mu(s_2) + K\abs{s_1- s_2}.
\end{equation}
Reversing the role of the indices $1$ and $2$ in the derivation of this inequality gives the same bound with the indices switched.  We deduce that 
\begin{equation}
 \abs{\mu(s_1) - \mu(s_2)} \le K \abs{s_1-s_2},
\end{equation}
which proves the first assertion.

To prove \eqref{e_l_00} we note that equality \eqref{E_lower_bound} and the non-negativity of $E_1$ imply that
\begin{equation}
 \mu(s) \ge -g \abs{\xi} + s \inf_{(\varphi,\psi)\in\mathcal{A}}    E_1(\varphi,\psi).  
\end{equation}
It is a simple matter to see that this infimum, which we call the constant $C_2$, is positive.  Finally, to prove the third assertion, note that if  $0 < s_1 < s_2< \infty$,  then the decomposition \eqref{e_l_1} implies that
\begin{equation}
 \mu(s_1) = E(\varphi_{s_1}, \psi_{s_1};s_1) \le E(\varphi_{s_2}, \psi_{s_2};s_1) \le  E(\varphi_{s_2}, \psi_{s_2}; s_2) = \mu(s_2).
\end{equation}
This shows that $\mu$ is non-decreasing in $s$.  Now suppose  by way of contradiction that $\mu(s_1) = \mu(s_2)$.  Then the previous inequality implies that  
\begin{equation}
 s_1 E_1(\varphi_{s_2}, \psi_{s_2}) = s_2 E_1(\varphi_{s_2}, \psi_{s_2}),
\end{equation}
which means that $E_1(\varphi_{s_2}, \psi_{s_2})=0$.  This in turn forces $\varphi_{s_2} = \psi_{s_2} = 0$, which contradicts the fact that $(\varphi_{s_2}, \psi_{s_2}) \in  \mathcal{A}$.  Hence equality cannot be achieved, and $\mu$ is strictly increasing in $s$. 

\end{proof}

Now we know that when $0< \abs{\xi}^2 < g\jump{\rho_0}/\sigma$, the eigenvalue $\mu(s)$ is a continuous function.  We can then define the open set
\begin{equation}
 \mathcal{S} = \mu^{-1}((-\infty,0)) \subset (0,\infty),
\end{equation}
on which we can calculate $\lambda = \sqrt{-\mu} >0$.  Note that $\mathcal{S}$ is non-empty by Proposition \ref{neg_inf}.

We can now state a result giving the existence of solutions to \eqref{s_coupled_1}--\eqref{s_coupled_2} for these values of $\abs{\xi},s$.  To emphasize the dependence on the parameters, we write
\begin{equation}
 \varphi = \varphi_{s}(\abs{\xi},x_3),  \psi= \psi_{s}(\abs{\xi},x_3), \text{ and } \lambda = \lambda(\abs{\xi},s).
\end{equation}

\begin{prop}\label{coupled_solution}
For each $s\in \mathcal{S}$  and $0<\abs{\xi}^2 < g\jump{\rho_0}/\sigma$ there exists a solution $\varphi_{s}(\abs{\xi},x_3),$  $\psi_{s}(\abs{\xi},x_3)$ with $\lambda = \lambda(\abs{\xi},s)>0$ to the problem \eqref{s_coupled_1}--\eqref{s_coupled_2} along with the corresponding jump and boundary conditions. For these solutions $\psi_{s}(\abs{\xi},0) \neq 0$ and the solutions are smooth when restricted to either $(-m,0)$ or $(0,\ell)$.
\end{prop}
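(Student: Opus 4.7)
The plan is to assemble the proposition directly from the preceding results, since essentially all the work has already been done in Propositions \ref{min_exist}--\ref{eigen_lip} and Lemma \ref{non_zero_origin}. The key observation is that the Euler–Lagrange equations \eqref{eigen_coupled_1}--\eqref{eigen_coupled_2} produced in Proposition \ref{e_l_eqns} are exactly the modified eigenvalue system \eqref{s_coupled_1}--\eqref{s_coupled_2} upon the identification $\mu = -\lambda^2$. Hence, to obtain $\lambda > 0$ we need precisely $\mu < 0$, which is the defining property of the set $\mathcal{S}$.

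Here is the outline. Fix $s \in \mathcal{S}$ and $\abs{\xi}^2 \in (0, g\jump{\rho_0}/\sigma)$. By Proposition \ref{min_exist}, the energy $E(\cdot,\cdot;s)$ attains its infimum $\mu(s)$ on $\mathcal{A}$ at some pair $(\varphi_s,\psi_s) \in \mathcal{A}$. By Proposition \ref{e_l_eqns}, this minimizer is smooth on each of $(-m,0)$ and $(0,\ell)$, and satisfies equations \eqref{eigen_coupled_1}--\eqref{eigen_coupled_2} with eigenvalue $\mu(s)$ together with the stated jump and boundary conditions. Since $s \in \mathcal{S}$ means $\mu(s) < 0$, we may define $\lambda(\abs{\xi},s) := \sqrt{-\mu(s)} > 0$. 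Then $\mu(s)\rho_0 \varphi_s = -\lambda^2 \rho_0 \varphi_s$ and similarly for $\psi_s$, so $(\varphi_s,\psi_s,\lambda)$ solves \eqref{s_coupled_1}--\eqref{s_coupled_2}. The jump conditions in Proposition \ref{e_l_eqns} and in \eqref{s_coupled_conds_1} are identical (the value $\psi(0)$ appearing in the former is just the trace of $\psi$ at the interface). The boundary conditions $\varphi_s(\pm) = \psi_s(\pm) = 0$ follow from the fact that $(\varphi_s,\psi_s) \in H^1_0 \times H^1_0$.

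To get $\psi_s(\abs{\xi},0) \neq 0$, I would invoke Lemma \ref{non_zero_origin}: since $E(\varphi_s,\psi_s;s) = \mu(s) < 0$, the lemma guarantees $\psi_s(0) \neq 0$ (and also reconfirms the constraint $\abs{\xi}^2 < g\jump{\rho_0}/\sigma$, which is already in our hypothesis). Smoothness on each subinterval is part of the conclusion of Proposition \ref{e_l_eqns}, obtained there by standard elliptic bootstrapping from the weak formulation.

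There is no real obstacle; the proof is a bookkeeping exercise that matches up the Euler–Lagrange system with the modified problem and translates the sign condition on $\mu$ into positivity of $\lambda$. The only thing worth double-checking is the consistency of the jump condition at $x_3 = 0$ under the substitution $\psi(0) \leftrightarrow \psi$, and the fact that the minimizer is genuinely nontrivial (guaranteed by $(\varphi_s,\psi_s) \in \mathcal{A}$, since $J(\varphi_s,\psi_s) = 1$ rules out the zero solution). With these verifications in place, the proof of Proposition \ref{coupled_solution} is a single short paragraph citing the previous results.
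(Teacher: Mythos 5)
Your proposal is correct and is essentially identical to the paper's own proof: the paper likewise takes the minimizer from Proposition \ref{e_l_eqns}, uses $s\in\mathcal{S}$ to write $\mu=-\lambda^2$ with $\lambda>0$, and invokes Lemma \ref{non_zero_origin} for $\psi_s(\abs{\xi},0)\neq 0$, with smoothness and the jump/boundary conditions already supplied by Proposition \ref{e_l_eqns}.
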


\begin{proof}
Let $(\varphi_{s}(\abs{\xi},\cdot),\psi_{s}(\abs{\xi},\cdot)) \in \mathcal{A}$ be the solutions to \eqref{eigen_coupled_1}--\eqref{eigen_coupled_2} constructed in Proposition \ref{e_l_eqns}.  Since $s \in \mathcal{S}$ we may write $\mu = -\lambda^2$ for $\lambda>0$, which means that the pair $(\varphi_{s}(\abs{\xi},\cdot),\psi_{s}(\abs{\xi},\cdot))$ solve the problem \eqref{s_coupled_1}--\eqref{s_coupled_2}.  The fact that $\psi_{s}(\abs{\xi},0) \neq 0$ follows from Lemma \ref{non_zero_origin}.
\end{proof}

In order for these solutions to give rise to solutions to the original problem, we must be able to find $s \in \mathcal{S}$ so that $s= \lambda(\abs{\xi},s)$. It turns out that the set $\mathcal{S}$ is sufficiently large to accomplish this.  

\begin{thm}\label{inversion}
There exists a unique $s \in \mathcal{S}$ so that $\lambda(\abs{\xi},s)=\sqrt{-\mu(s)}>0$ and 
\begin{equation}\label{inv_0}
 s= \lambda(\abs{\xi},s).
\end{equation}
\end{thm}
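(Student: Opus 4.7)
The plan is to reduce \eqref{inv_0} to a scalar fixed-point problem on the interval $\mathcal{S}$ and solve it via the intermediate value theorem, exploiting the continuity and strict monotonicity of $\mu$ established in Proposition \ref{eigen_lip}.

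First I would identify the structure of $\mathcal{S}$. Since $\mu$ is continuous and strictly increasing on $(0,\infty)$ and satisfies the lower bound $\mu(s) \ge -g\abs{\xi} + sC_2$ from \eqref{e_l_00}, one has $\mu(s) \to +\infty$ as $s \to \infty$. On the other hand, Proposition \ref{neg_inf} provides some $s_0 > 0$ with $\mu(s_0) < 0$. Combined with monotonicity, this forces $\mathcal{S} = (0, s^*)$ for a unique finite $s^* \in (0,\infty)$ characterized by $\mu(s^*) = 0$ (using continuity at $s^*$).

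Next I would define $F: \mathcal{S} \to \mathbb{R}$ by
\begin{equation}
F(s) := \lambda(\abs{\xi},s) - s = \sqrt{-\mu(s)} - s.
\end{equation}
On $\mathcal{S}$ the function $\mu$ is negative and continuous, so $F$ is continuous. For the endpoint behavior: as $s \to s^{*-}$, one has $\mu(s) \to 0^-$, so $\sqrt{-\mu(s)} \to 0$ while $s \to s^* > 0$, hence $F(s) \to -s^* < 0$. For $s \to 0^+$, the monotonicity of $\mu$ gives $\mu(s) \le \mu(s_0) < 0$ for all $s \le s_0$, so $\sqrt{-\mu(s)} \ge \sqrt{-\mu(s_0)} > 0$ while $s \to 0$; thus $F(s) > 0$ for all sufficiently small $s$. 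The intermediate value theorem then yields some $s \in \mathcal{S}$ with $F(s) = 0$, which is the desired fixed point.

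For uniqueness, I would observe that by the strict monotonicity of $\mu$ from Proposition \ref{eigen_lip}, $-\mu$ is strictly decreasing on $\mathcal{S}$, hence so is $s \mapsto \sqrt{-\mu(s)}$; subtracting the strictly increasing $s$ makes $F$ strictly decreasing on $\mathcal{S}$, so its zero is unique. I do not expect a serious obstacle here: the only subtlety is making sure $s^*$ is finite and positive (which is where Proposition \ref{neg_inf} and the lower bound \eqref{e_l_00} both come in), and that the continuity statement of Proposition \ref{eigen_lip} extends up to $s^*$ from the left, which follows from local Lipschitz continuity on $(0,\infty)$.
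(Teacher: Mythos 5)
Your proposal is correct and follows essentially the same route as the paper: establish $\mathcal{S}=(0,s_*)$ from Proposition \ref{neg_inf}, the lower bound \eqref{e_l_00}, and the continuity and strict monotonicity of $\mu$, then apply the intermediate value theorem together with strict monotonicity of an auxiliary function to get existence and uniqueness of the fixed point. The only cosmetic difference is that you use $F(s)=\sqrt{-\mu(s)}-s$ (strictly decreasing) where the paper uses $\Phi(s)=s/\lambda(\abs{\xi},s)$ (strictly increasing); the two arguments are interchangeable.
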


\begin{proof}
According to Remark \ref{neg_inf_remark}, we know that $\mu(s) \le -C_0 + s C_1$.  Moreover, the lower bound \eqref{e_l_00} implies that $\mu(s)\rightarrow +\infty$ as $s \rightarrow \infty$.  Since $\mu$ is continuous and strictly increasing, there exists $s_*\in(0,\infty)$ so that 
\begin{equation}
 \mathcal{S} = \mu^{-1}((-\infty,0)) = (0,s_*).
\end{equation}
Since $\mu < 0$ on $\mathcal{S}$, we may define $\lambda=\sqrt{-\mu}$ there.  Now define the function $\Phi: (0,s_*)  \rightarrow (0,\infty)$ according to 
\begin{equation}
 \Phi(s) = s/\lambda(\abs{\xi},s).
\end{equation}
It is a simple matter to check that the continuity and monotonicity properties of $\mu$ are inherited by $\Phi$, i.e. $\Phi$ is continuous and  strictly increasing in $s$.  Also, $\lim_{s \rightarrow 0}\Phi(s) = 0$ and $\lim_{s\rightarrow s_*}\Phi(s)=+\infty$.  Then by the intermediate value theorem, there exists $s \in (0,s_*)$ so that $\Phi(s) =1$, i.e. 
$s = \lambda(\abs{\xi},s).$  This $s$ is unique since $\Phi$ is strictly increasing.
\end{proof}

We may now use Theorem \ref{inversion} to think of $s = s(\abs{\xi})$ since for each fixed $0< \abs{\xi}^2< g\jump{\rho_0}/\sigma$ we can uniquely find $s\in\mathcal{S}$ so that \eqref{inv_0} holds.  As such we may also write $\lambda = \lambda(\abs{\xi})$ from now on.

Using this new notation and the solutions to \eqref{s_coupled_1}--\eqref{s_coupled_2} given by Proposition \ref{coupled_solution}, we can construct solutions to the system system \eqref{w_1_equation}--\eqref{w_3_equation} as well.

\begin{proof}[Proof of Theorem \ref{w_soln_2}]
We may find a rotation operator $R \in SO(2)$ so that $R \xi = (\abs{\xi},0)$.   For $s = s(\abs{\xi})$ given by Theorem \ref{inversion}, define $(\varphi(\xi,x_3),\theta(\xi,x_3)) = R^{-1} (\varphi_{s}(\abs{\xi},x_3),0)$ and $\psi(\xi,x_3) = \psi_{s}(\abs{\xi},x_3)$, where the functions $\varphi_{s}(\abs{\xi},x_3)$ and $\psi_{s}(\abs{\xi},x_3)$ are the solutions from Proposition \ref{coupled_solution}.  This gives a solution to \eqref{w_1_equation}--\eqref{w_3_equation}.  The equivariance in $\xi$ follows from the definition.
\end{proof}

\subsection{Behavior of the solutions with respect to $\xi$}

In this section we shall study the behavior of the solutions from Theorem \ref{w_soln_2} in terms of $\xi$.  We assume throughout that $\abs{\xi} \in (0,\abs{\xi}_c)$ with $\abs{\xi}_c = \sqrt{g\jump{\rho_0}/\sigma}$.  The results are primarily needed in the non-periodic case, when there is a continuum of spatial frequencies in $(0,\abs{\xi}_c)$.

The first result shows that $\lambda$ is a bounded, continuous function of $\abs{\xi}$.

\begin{prop}\label{lambda_cont}
The function $\lambda:(0,\abs{\xi}_c) \rightarrow (0,\infty)$ is bounded, continuous, and satisfies
\begin{equation}
 \lim_{\abs{\xi}\rightarrow 0} \lambda(\abs{\xi}) = \lim_{\abs{\xi}\rightarrow \abs{\xi}_c} \lambda(\abs{\xi}) = 0.
\end{equation} 
\end{prop}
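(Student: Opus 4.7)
The plan is to dispatch boundedness and the $\abs{\xi} \to 0^+$ limit directly from Proposition \ref{eigen_lip}(2), prove the $\abs{\xi} \to \abs{\xi}_c^-$ limit by a contradiction argument based on the decomposition \eqref{n_z_o_1}, and finally establish continuity using joint continuity of $\mu$ together with the implicit characterization of $\lambda$ from Theorem \ref{inversion}. For the first step, substituting $s = \lambda(\abs{\xi})$ and $\mu = -\lambda^2$ into \eqref{e_l_00} yields $\lambda^2 + C_2 \lambda \le g\abs{\xi}$, hence $\lambda(\abs{\xi}) \le \sqrt{g\abs{\xi}} \le \sqrt{g\abs{\xi}_c}$; this delivers both boundedness on $(0,\abs{\xi}_c)$ and the vanishing limit at $0^+$ in one stroke.

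For the limit as $\abs{\xi} \to \abs{\xi}_c^-$, I argue by contradiction. Suppose $\lambda(\abs{\xi}_n) \to \lambda_0 > 0$ along some sequence $\abs{\xi}_n \nearrow \abs{\xi}_c$, and let $(\varphi_n,\psi_n) \in \mathcal{A}$ be the associated minimizers with $s_n = \lambda_n$. Inserting these into the rewriting \eqref{n_z_o_1} and using $E(\varphi_n,\psi_n) = -\lambda_n^2 \le 0$, I first discard the positive integral terms to obtain the lower bound
$$\psi_n(0)^2 \ge \frac{2\lambda_n^2}{g\jump{\rho_0} - \sigma\abs{\xi}_n^2},$$
which diverges because $\lambda_n \to \lambda_0 > 0$ while the denominator vanishes. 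On the other hand, retaining only the viscous piece $\tfrac{1}{2}\int \tep (\psi_n' - \abs{\xi}_n \varphi_n)^2$ from \eqref{n_z_o_1}, using $\tep \ge s_n \inf \ep_0$, bounding $\psi_n(0)^2 \le m \int_{-m}^0 (\psi_n')^2$ by Cauchy-Schwarz (with $\psi_n(-m) = 0$), expanding $(\psi_n')^2 \le 2(\psi_n' - \abs{\xi}_n \varphi_n)^2 + 2\abs{\xi}_n^2 \varphi_n^2$, and invoking $\int \rho_0 \varphi_n^2 \le 2$ from the constraint $J = 1$, I obtain the self-bounding inequality
$$\psi_n(0)^2 \le \frac{2m\,(g\jump{\rho_0} - \sigma\abs{\xi}_n^2)}{s_n\, \inf \ep_0}\,\psi_n(0)^2 + \frac{4m \abs{\xi}_n^2}{\inf \rho_0}.$$
Because $s_n \to \lambda_0 > 0$ while $g\jump{\rho_0} - \sigma\abs{\xi}_n^2 \to 0$, the leading coefficient falls below $1/2$ for large $n$, so $\psi_n(0)^2$ is bounded by a uniform constant depending only on $m$, $\abs{\xi}_c$, and $\inf \rho_0$, contradicting the lower bound.

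For continuity, I extend the argument of Proposition \ref{eigen_lip}(1) to show joint local Lipschitz continuity of $\mu(\abs{\xi},s)$ on $(0,\abs{\xi}_c) \times (0,\infty)$: the form $E(\varphi,\psi;\abs{\xi},s)$ is smooth in $(\abs{\xi},s)$ for each fixed $(\varphi,\psi)$, and on any compact subset of the parameter region the minimizers are uniformly bounded in $H_0^1 \times H_0^1$ (a consequence of \eqref{n_z_o_1}, the lower bound $\mu \ge -g\abs{\xi}$, and the constraint $J=1$), so the same two-sided comparison between minimizers at different parameter values applies. Joint continuity of $\mu$ together with strict monotonicity in $s$ (Proposition \ref{eigen_lip}(3)) implies that $\Phi(\abs{\xi},s) := s/\sqrt{-\mu(\abs{\xi},s)}$ is jointly continuous on $\{\mu < 0\}$ and strictly increasing in $s$, so by the intermediate-value argument of Theorem \ref{inversion} the unique solution $s(\abs{\xi})$ of $\Phi(\abs{\xi},s) = 1$ is a continuous function of $\abs{\xi}$, and hence so is $\lambda(\abs{\xi}) = s(\abs{\xi})$.

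The main obstacle is the contradiction argument at the critical frequency: both the lower and upper bounds on $\psi_n(0)^2$ involve the vanishing quantity $g\jump{\rho_0} - \sigma\abs{\xi}_n^2$, and the upper bound works only because the modified viscosity $\tep = s_n \ep_0$ stays bounded away from zero under the contradiction hypothesis $\lambda_0 > 0$. This quantitative interplay between viscosity and surface tension at the cutoff is the essential analytic content of the proposition.
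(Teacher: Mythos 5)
Your argument is correct, but it diverges from the paper's proof in two of its three parts, and the comparison is worth recording. For boundedness and the limit at $0$ you and the paper do the same thing: drop the nonnegative viscous contribution to get $\lambda^2(\abs{\xi})\le g\abs{\xi}$ (the paper reads this off \eqref{l_c_1}, you off \eqref{e_l_00}; same content). For the limit at $\abs{\xi}_c$ the paper argues directly and quantitatively: from \eqref{E_lower_bound} the surface-tension term gives $(\psi_{\abs{\xi}}(0))^2\le 2g/(\sigma\abs{\xi})$, and chaining with $\lambda^2\le \tfrac{1}{2}\left(g\jump{\rho_0}-\sigma\abs{\xi}^2\right)(\psi_{\abs{\xi}}(0))^2$ from \eqref{n_z_o_1} yields $\lambda^2\le g\left(g\jump{\rho_0}-\sigma\abs{\xi}^2\right)/(\sigma\abs{\xi})\to 0$, with an explicit rate. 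You instead run a contradiction argument in which the upper bound on $\psi_n(0)^2$ comes from the viscous term $\int\tep(\psi_n'-\abs{\xi}_n\varphi_n)^2$ together with the fundamental-theorem/Cauchy--Schwarz trace bound and the constraint $J=1$; this is valid (your self-bounding inequality is correctly derived and the coefficient indeed vanishes under the hypothesis $\lambda_0>0$), but it is more roundabout, purely qualitative, and uses viscosity where surface tension alone suffices. For continuity the routes genuinely differ: the paper proves continuity of $\abs{\xi}\mapsto\mu(\abs{\xi})=-\lambda^2(\abs{\xi})$ by compactness of the minimizers (lower bound on $s(\abs{\xi}_n)$ from Remark \ref{neg_inf_remark}, uniform $H^1$ bounds from \eqref{l_c_1}, $H^2$ bounds from the Euler--Lagrange ODEs, and strong subsequential $H^1$ convergence), while you prove joint local Lipschitz continuity of the two-parameter eigenvalue $\mu(\abs{\xi},s)$ by extending the comparison argument of Proposition \ref{eigen_lip} and then recover continuity of $\lambda(\abs{\xi})=s(\abs{\xi})$ from strict monotonicity of $\Phi(\abs{\xi},\cdot)$ and the intermediate-value characterization of Theorem \ref{inversion}. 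Your route avoids the elliptic bootstrapping and the (somewhat delicate) identification of the subsequential limit as the minimizer at the limiting frequency, at the cost of having to verify the uniform $H^1$ bound on minimizers over compact parameter sets and the two-sided comparison with $\abs{\xi}$ varying; these are routine given \eqref{E_lower_bound}, the trace estimate for $\psi(0)$, and $J=1$, but in a complete write-up you should carry them out rather than assert them, and you should also note (as you implicitly use) that for fixed $\abs{\xi}$ one may shrink $\epsilon$ so that $s(\abs{\xi})+\epsilon$ stays below $s_*(\abs{\xi})$ before invoking the intermediate value theorem at nearby frequencies.
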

\begin{proof}
We begin by proving the continuity claim.  Since $\lambda = \sqrt{-\mu}$ it suffices to prove the continuity of $\mu = \mu(\abs{\xi})$.  By Proposition \ref{e_l_eqns}, for every $\abs{\xi}\in(0,\abs{\xi}_c)$ there exist functions $(\varphi_{\abs{\xi}},\psi_{\abs{\xi}})\in \mathcal{A}$ satisfying \eqref{eigen_coupled_1}--\eqref{eigen_coupled_2} so that $\mu(\abs{\xi}) = E(\varphi_{\abs{\xi}}, \psi_{\abs{\xi}})$. 
We have that $\mu(\abs{\xi}) < 0$, which, when combined with  \eqref{E_lower_bound}, yields the bound
\begin{equation}\label{l_c_1}
-g \abs{\xi} +  s(\abs{\xi})   \int_{-m}^\ell \frac{\ep_0}{2} \left((\varphi_{\abs{\xi}}' - \abs{\xi} \psi_{\abs{\xi}})^2 + (\psi_{\abs{\xi}}'- \varphi_{\abs{\xi}})^2 ) \le \mu(\abs{\xi} \right) < 0
\end{equation}
for all $\abs{\xi}$.

Now suppose $\abs{\xi}_n \in (0,g\jump{\rho_0}/\sigma)$ is a sequence so that $\abs{\xi}_n \rightarrow \abs{\xi} \in(0,g\jump{\rho_0}/\sigma)$.  We may assume without loss of generality that $\abs{\xi}_n\in[\abs{\xi}/2,(\abs{\xi}+\abs{\xi}_c)/2]$  if $\sigma>0$ or  $\abs{\xi}_n\in[\abs{\xi}/2,2\abs{\xi}]$ if $\sigma=0$.  In order to make use of the bound \eqref{l_c_1} we must show that $s(\abs{\xi}_n)$ is bounded uniformly from below as $n \rightarrow \infty$.  By Remark \ref{neg_inf_remark}, there exist positive constants $C_0,C_1$ so that $\mu(\abs{\xi}_n) \le -C_0 + s(\abs{\xi}_n) C_1$, but $-\mu(\abs{\xi}_n) =\lambda^2(\abs{\xi}_n) = s^2(\abs{\xi}_n)$, so 
\begin{equation}
 0 \le s^2(\abs{\xi}_n) + C_1 s(\abs{\xi}_n)  - C_0 
\end{equation}
and hence $s(\abs{\xi}_n)$ is bounded below by a positive constant.  Then \eqref{l_c_1} and the fact that $(\varphi_{\abs{\xi}_n},\psi_{\abs{\xi}_n})\in \mathcal{A}$ imply that $\varphi_{\abs{\xi}_n}$ and $\psi_{\abs{\xi}_n}$ are uniformly bounded in $H^1((-m,\ell))$.  Plugging into the ODE \eqref{eigen_coupled_1}--\eqref{eigen_coupled_2} in the intervals $(-m,0)$ and $(0,\ell)$ separately, we find that $\varphi_{\abs{\xi}_n}$ and $w_{\abs{\xi}_n}$ are uniformly bounded in $H^2((-m,0))$ and $H^2((0,\ell))$.  So, up to the extraction of a subsequence we have that
\begin{equation}
(\varphi_{\abs{\xi}_n},\psi_{\abs{\xi}_n}) \rightarrow (\varphi_{\abs{\xi}},\psi_{\abs{\xi}}) \text{ strongly in } H^1((-m,0)) \text{ and } H^1((0,\ell)).
\end{equation}
This implies that along the subsequence
\begin{equation}
 \mu(\abs{\xi}_n) = E(\varphi_{\abs{\xi}_n}, \psi_{\abs{\xi}_n}) \rightarrow E(\varphi_{\abs{\xi}},\psi_{\abs{\xi}}) = \mu(\abs{\xi}).
\end{equation}
Since this must hold for any such extracted subsequence, we deduce that $\mu(\abs{\xi}_n) \rightarrow \mu(\abs{\xi})$ for the original sequence $\abs{\xi}_n$ as well, and hence $\mu$ is continuous.

We now derive the limits as $\abs{\xi} \rightarrow 0,\abs{\xi}_c$.  By \eqref{l_c_1}, 
$0 \le  \lambda^2(\abs{\xi}) \le g \abs{\xi},$ which establishes that $\lim_{\abs{\xi} \rightarrow 0} \lambda(\abs{\xi})=0$.  By \eqref{E_lower_bound} we know that
\begin{equation}
 (\psi_{\abs{\xi}}(0))^2 \le \frac{2 g}{\sigma \abs{\xi}},
\end{equation}
but by \eqref{n_z_o_1} we also know that
\begin{equation}
 \lambda^2(\abs{\xi}) \le \frac{g \jump{\rho_0} - \sigma \abs{\xi}^2}{2} (\psi_{\abs{\xi}}(0))^2.
\end{equation}
Chaining the two inequalities together then shows that $\lim_{\abs{\xi} \rightarrow \abs{\xi}_c} \lambda(\abs{\xi})=0$.

\end{proof}

\begin{remark}
A trivial consequence of this result is that the supremum of $\lambda$ is achieved.  We denote the supremum in the non-periodic case by $\Lambda$ (see \eqref{max_def}) and in the periodic case by $\Lambda_L$ (see \eqref{max_def_per}).  
\end{remark}

The next result provides an estimate for the $H^k$ norm of the solutions $(\varphi,\theta,\psi)$ constructed in Theorem \ref{w_soln_2}, which will be useful later when such solutions are integrated in a Fourier synthesis.

\begin{lem}\label{sobolev_bounds}
Suppose $0<a<b<\abs{\xi}_c$ and that $\abs{\xi} \in [a,b]$.  Let $(\varphi, \theta, \psi)$ be the solutions constructed in Theorem \ref{w_soln_2}.  Then for each $k \ge 0$ there exists a constant $A_k >0$ depending on the parameters  $a,b,\rho^\pm_0, P_{\pm}, g,  \ep_\pm, \delta_\pm, \sigma, m, \ell$, 
\begin{multline}\label{s_b_0}
 \norm{\varphi( \xi,\cdot)}_{H^k((-m,0))} + \norm{\theta( \xi,\cdot)}_{H^k((-m,0))} + \norm{\psi(\xi,\cdot)}_{H^k((-m,0))}  \\
+ \norm{\varphi( \xi,\cdot )}_{H^k((0,\ell))} + \norm{\theta( \xi,\cdot )}_{H^k((0,\ell))} + \norm{\psi( \xi,\cdot )}_{H^k((0,\ell))} \le A_k.
\end{multline}
Also, there exists a $B_0>0$ depending on the same parameters so that 
\begin{equation}\label{s_b_00}
 \norm{\sqrt{\varphi^2( \xi,\cdot)  + \theta^2( \xi,\cdot) + \psi^2( \xi,\cdot) }}_{L^2((-m,\ell))} \ge B_0.
\end{equation}

\end{lem}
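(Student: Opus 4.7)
My plan is to first reduce to the case $\xi = (\abs{\xi},0)$ via the equivariance \eqref{w_s_0}, so that $\theta(\xi,\cdot)=0$ and $(\varphi,\psi) = (\varphi_{s},\psi_{s})$ with $s = s(\abs{\xi}) = \lambda(\abs{\xi})$. Since the rotation has operator norm one, any Sobolev bound I obtain for $(\varphi_{s},\psi_{s})$ transfers to the rotated pair. The lower bound \eqref{s_b_00} is then essentially immediate: the constraint $(\varphi,\psi)\in\mathcal{A}$ says $\int_{-m}^\ell \rho_0(\varphi^2+\psi^2)=2$, and since $\rho_0$ is bounded above by a positive constant $M$, we get
\begin{equation*}
\pnormspace{\sqrt{\varphi^2+\theta^2+\psi^2}}{2}{(-m,\ell)}^2 = \int_{-m}^\ell \varphi^2+\psi^2 \ge 2/M =: B_0^2.
\end{equation*}

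\textbf{Uniform control of $s(\abs{\xi})$.} Before anything else I need to show that $s(\abs{\xi})$ stays in a compact subinterval of $(0,\infty)$ for $\abs{\xi}\in[a,b]$. The upper bound is cheap: the inequality \eqref{E_lower_bound} gives $\mu(\abs{\xi})\ge -g\abs{\xi}$, so $s^2(\abs{\xi}) = -\mu(\abs{\xi}) \le gb$. For the lower bound I invoke Remark \ref{neg_inf_remark}: there exist $C_0,C_1>0$ (depending only on $a,b$ and the physical parameters) with $\mu(s) \le -C_0 + sC_1$ uniformly for $\abs{\xi}^2\in[a^2,b^2]$. Evaluating at $s=s(\abs{\xi})$ and using $-\mu = s^2$, I get $s^2 + C_1 s - C_0 \ge 0$, hence $s(\abs{\xi}) \ge \frac{1}{2}(-C_1+\sqrt{C_1^2+4C_0}) =: s_{\min} > 0$. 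So $s(\abs{\xi})\in[s_{\min}, s_{\max}]$ with $s_{\max} = \sqrt{gb}$, uniformly over $\abs{\xi}\in[a,b]$.

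\textbf{$H^1$ bound.} Next I use the identity \eqref{E_lower_bound}: since $E(\varphi_s,\psi_s;s)=-s^2 \le 0$, I obtain
\begin{equation*}
\frac{\sigma\abs{\xi}^2}{2}(\psi(0))^2 + \hal\!\int_{-m}^\ell\!(\td + P'(\rho_0)\rho_0)(\psi'+\abs{\xi}\varphi)^2 + g\abs{\xi}\rho_0(\varphi-\psi)^2 + \hal\!\int_{-m}^\ell\!\tep\bigl((\varphi'-\abs{\xi}\psi)^2 + (\psi'-\abs{\xi}\varphi)^2 + \tfrac{1}{3}(\psi'+\abs{\xi}\varphi)^2\bigr) \le g\abs{\xi}\le gb.
\end{equation*}
Because $\tep = s\ep_0 \ge s_{\min}\inf \ep_0 > 0$ and $P'(\rho_0)\rho_0$ is bounded below by a positive constant on each fluid region, each quadratic form on the left has a uniformly positive lower-bound coefficient. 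I extract uniform $L^2$ bounds on $\varphi'-\abs{\xi}\psi$, $\psi'-\abs{\xi}\varphi$, and $\psi'+\abs{\xi}\varphi$. Combining with the $L^2$ bound $\int(\varphi^2+\psi^2) \le 2/\inf\rho_0$ coming from $\mathcal{A}$, I recover uniform $L^2$ bounds on $\varphi', \psi'$, i.e.\ a uniform bound in $H^1((-m,\ell))$.

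\textbf{Bootstrap to $H^k$ on each subinterval.} On each of the open intervals $(-m,0)$ and $(0,\ell)$, the coefficients of the ODE system \eqref{s_coupled_1}--\eqref{s_coupled_2} are smooth in $x_3$, and their bounds (above, and from below for $\tep$ and $4\tep/3+\td+P'(\rho_0)\rho_0$) are uniform in $\abs{\xi}\in[a,b]$ thanks to the bracket $s\in[s_{\min},s_{\max}]$. Writing the system in the form
\begin{equation*}
\tep \varphi'' = \text{(lower order in $\varphi,\varphi',\psi,\psi'$ with bounded coefficients)},\qquad (4\tep/3+\td+P'(\rho_0)\rho_0)\psi'' = \text{(lower order)},
\end{equation*}
I solve algebraically for $\varphi'',\psi''$, and the $H^1$ bound on the right-hand side gives an $L^2$ bound on the left, i.e.\ a uniform $H^2$ bound on each subinterval. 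Differentiating the ODEs and repeating produces the desired $H^k$ bound \eqref{s_b_0} for every $k$, with the constant $A_k$ depending only on $a,b$ and the physical parameters.

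\textbf{Main obstacle.} The real work is in the first step: proving that $s(\abs{\xi})$ is bounded away from $0$ uniformly on $[a,b]$. Without this, the viscous coefficients $\tep, \td$ could degenerate and the coercivity of $E$ (and the ellipticity of the bootstrap) would collapse. Once Remark \ref{neg_inf_remark} delivers the quantitative lower bound on $\mu(s)$ that is uniform across $\abs{\xi}\in[a,b]$, everything else is standard energy-plus-bootstrap.
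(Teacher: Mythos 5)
Your proof is correct and follows the same overall strategy as the paper: reduce to $\xi = (\abs{\xi},0)$ via equivariance, establish uniform two-sided bounds on $s(\abs{\xi})$ over $[a,b]$, get the base Sobolev bound from the constraint $(\varphi,\psi)\in\mathcal{A}$, and bootstrap on each subinterval by solving the ODEs for the highest derivatives. The one place you genuinely improve on the written proof is the $H^1$ step. The paper's induction asserts $\norm{\varphi}_{H^{k+1}} + \norm{\psi}_{H^{k+1}} \le C(\norm{\varphi}_{H^k} + \norm{\psi}_{H^k})$ for all $k\ge 0$ by ``differentiating the equations,'' but the $k=0$ case of that claim cannot be obtained from the ODEs alone: solving \eqref{s_coupled_1}--\eqref{s_coupled_2} for $\varphi'',\psi''$ produces right-hand sides that already involve $\varphi',\psi'$, so the ODE estimate only gives $H^{k+1}$ from $H^k$ for $k\ge 1$. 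You correctly supply the missing $H^1$ estimate from the energy identity \eqref{E_lower_bound}, using $E(\varphi_s,\psi_s;s)=-s^2\le 0$ and the uniform positive lower bounds on $\tep$ and $P'(\rho_0)\rho_0$ to extract $L^2$ control of $\varphi'-\abs{\xi}\psi$ and $\psi'-\abs{\xi}\varphi$, which combined with the $L^2$ bound from $\mathcal{A}$ gives $\varphi',\psi'\in L^2$ uniformly. (The paper does carry out essentially this energy estimate in the proof of Proposition \ref{lambda_cont}, so the ingredient is present, but it is not invoked where the $k=0\to 1$ step needs it.) Your lower bound on $s(\abs{\xi})$ via Remark \ref{neg_inf_remark} is a quantitative alternative to the paper's appeal to continuity of $\lambda$ on the compact set $[a,b]$; both are valid, yours is slightly more self-contained. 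The derivation of \eqref{s_b_00} from the normalization in $\mathcal{A}$ and the boundedness of $\rho_0$ is exactly the paper's argument.
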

\begin{proof}
Since the solutions in Theorem \ref{w_soln_2} are constructed from  rotations of the solutions constructed in Proposition \ref{coupled_solution}, it suffices to prove
\begin{multline} 
 \norm{\varphi( \abs{\xi},\cdot)}_{H^k((-m,0))} +  \norm{\psi(\abs{\xi},\cdot)}_{H^k((-m,0))}  \\
+ \norm{\varphi( \abs{\xi},\cdot )}_{H^k((0,\ell))}   + \norm{\psi( \abs{\xi},\cdot )}_{H^k((0,\ell))} \le A_k
\end{multline}
for the solutions $\varphi = \varphi(\abs{\xi},x_3)$, $\varphi = \varphi(\abs{\xi},x_3)$ constructed in the theorem.  For simplicity we will prove an estimate of the $H^k$ norms only on the interval $(0,\ell)$.  A bound on $(-m,0)$ follows similarly, and the result follows by adding the two.  Recall that $\rho_0$ and $P'(\rho_0)$ are smooth on each interval $(0,\ell)$ and $(-m,0)$ and bounded above and below.  

We proceed by induction on $k$.  For $k=0$ the fact that $(\varphi(\abs{\xi},\cdot),\psi(\abs{\xi},\cdot))\in \mathcal{A}$ implies that there is a constant $A_0>0$ depending on the various parameters so that
\begin{equation}
\norm{\varphi(\abs{\xi},\cdot)}_{L^2((0,\ell))} + \norm{\psi(\abs{\xi},\cdot)}_{L^2((0,\ell))} \le A_0.
\end{equation}  
Suppose now that the bound holds some $k\ge 0$, i.e.
\begin{equation}
 \norm{\varphi(\abs{\xi},\cdot)}_{H^k((0,\ell))} + \norm{\psi(\abs{\xi},\cdot)}_{H^k((0,\ell))} \le A_k.
\end{equation}
By Proposition \ref{lambda_cont}, $\lambda(\abs{\xi}) = s(\abs{\xi})$ is bounded above and below by positive quantities as functions of $\abs{\xi}$.  Then by differentiating the equations \eqref{s_coupled_1}--\eqref{s_coupled_2} we have that there exists a constant $C>0$ depending on the various parameters so that
\begin{multline}
 \norm{\varphi(\abs{\xi},\cdot)}_{H^{k+1}((0,\ell))} + \norm{\psi(\abs{\xi},\cdot)}_{H^{k+1}((0,\ell))}  \\
\le C(\norm{\varphi(\abs{\xi},\cdot)}_{H^k((0,\ell))} + \norm{\psi(\abs{\xi},\cdot)}_{H^k((0,\ell))} ) \le C A_k := A_{K+1}.
\end{multline}
Then the bound holds for $k+1$, and so by induction the bound holds for all $k\ge 0$.

To prove \eqref{s_b_00} we again utilize the fact that   $(\varphi(\abs{\xi},\cdot),\psi(\abs{\xi},\cdot))\in \mathcal{A}$.  Since $\rho_0$ is bounded above and below, the bound follows.

\end{proof}

\subsection{Solutions to \eqref{linearized_1}--\eqref{linearized_2}}\label{growing_section}

In this section we will construct growing  solutions to \eqref{linearized_1}--\eqref{linearized_2} by using the solutions to \eqref{w_1_equation}--\eqref{w_3_equation} constructed in Theorem \ref{w_soln_2}.  In the periodic case this can only be done when $L$ satisfies \eqref{L_large}, but the construction is essentially trivial since normal mode solutions are in $L^2(\Omega)$.  In the non-periodic case, we must resort to a Fourier synthesis of the normal modes in order to produce $L^2(\Omega)$ solutions.

We begin by defining some terms.  For a function $f\in L^2(\Omega)$, we define the horizontal Fourier transform in the non-periodic case via
\begin{equation}\label{hft}
 \hat{f}(\xi_1,\xi_2,x_3) = \int_{\Rn{2}} f(x_1,x_2,x_3) e^{-i(x_1 \xi_1 + x_2 \xi_2)}dx_1 dx_2
\end{equation}
for $\xi \in \Rn{2}$.  In the periodic case the integral over $\Rn{2}$ must be replaced with an integral over $(2\pi L\mathbb{T})^2$ for $\xi \in (L^{-1}\mathbb{Z})^2$.  In the non-periodic case, by the Fubini and Parseval theorems, we have that $\hat{f} \in L^2(\Omega)$ and  
\begin{equation}\label{parseval}
\int_\Omega \abs{f(x)}^2 dx = \frac{1}{4\pi^2} \int_\Omega \abs{\hat{f}(\xi,x_3)}^2 d\xi dx_3.
\end{equation}
The periodic case replaces $4\pi^2$ with $4\pi^2 L^2$ and the integral with a sum over $(L^{-1}\mathbb{Z})^2$ on the right hand side.

We now define the piecewise Sobolev spaces.  For a function $f$ defined on $\Omega$ we write $f_+$ for the restriction to $\Omega_+$ and $f_-$ for the restriction to $\Omega_-$.  For $k \in \mathbb{N}$, define the piecewise Sobolev space of order $k$ by 
\begin{equation}\label{sob_def}
H^k(\Omega) = \{f \;\vert\; f_+ \in H^k(\Omega_+), f_- \in H^k(\Omega_-)  \}
\end{equation} 
endowed with the norm $\norm{f}_{H^k}^2 = \norm{f}_{H^k(\Omega_+)}^2 + \norm{f}_{H^s(\Omega_-)}^2$.   Writing $I_- = (-m,0)$ and $I_+ = (0,\ell)$, we can take the norms to be given as
\begin{equation}
 \norm{f}_{H^k(\Omega_\pm)}^2 :=   \sum_{j=0}^k  \int_{\Rn{2}} (1+\abs{\xi}^2)^{k-j} \norm{\partial_{x_3}^j \hat{f}_\pm(\xi,\cdot) }^2_{L^2(I_\pm)} d\xi 
\end{equation}
in the non-periodic case; for the periodic case we replace  the integral over $\Rn{2}$ with a sum over $(L^{-1}\mathbb{Z})^2$ on the right hand side.  The main difference between the piecewise Sobolev space $H^k(\Omega)$ and the usual Sobolev space is that we do not require functions in the piecewise space to have weak derivatives across the interface $\{x_3=0\}$.

The $2\pi L$ periodic growing mode solutions may now be constructed.

\begin{proof}[Proof of Theorem \ref{growing_mode_soln_periodic}]
It is clear that $\eta,v,q$ defined in this way are solutions to \eqref{linearized_1}--\eqref{linearized_2}.  That they are real-valued follows from the equivariance in $\xi$ stated in Theorem \ref{w_soln_2}.  The solutions are in $H^k(\Omega)$ at $t=0$ because of Lemma \ref{sobolev_bounds}.  The growth in time stated in \eqref{gmsp_0} follows from the definition of $\eta,v,q$.
\end{proof}

In the non-periodic case the exponentials $e^{i x' \cdot \xi}$ are not in $L^2(\Omega)$, so we must utilize a Fourier synthesis.  The tradeoff for utilizing such a synthesis is that the growth rate is not exactly $e^{\Lambda t}$, but can be made arbitrarily close to it.

\begin{proof}[Proof of Theorem \ref{growing_mode_soln}]
For each fixed $\xi\in \Rn{2}$ so that $\abs{\xi} \in(0,\abs{\xi}_c)$,  
\begin{equation}
 \eta(x,t) =  f(\abs{\xi}) \hat{w}(\xi,x_3) e^{\lambda(\abs{\xi}) t} e^{i x'\cdot \xi},
\end{equation}
\begin{equation}
 v(x,t) = \lambda(\abs{\xi}) f(\abs{\xi}) \hat{w}(\xi,x_3) e^{\lambda(\abs{\xi}) t} e^{i x'\cdot \xi}, \text{ and }
\end{equation}
\begin{equation}
 q(x,t) 
= -\rho_0(x_3) f(\abs{\xi}) (\xi_1 \varphi(\xi,x_3) +  \xi_2 \theta(\xi,x_3) + \partial_3 \psi(\xi,x_3)) e^{\lambda(\abs{\xi}) t} e^{i x'\cdot \xi}
\end{equation}
constitute a solution to \eqref{linearized_1}--\eqref{linearized_2}.  Since  $\supp(f)\csubset (0,\abs{\xi}_c),$ Lemma \ref{sobolev_bounds} implies that 
\begin{equation}
\sup_{\xi \in \supp(f)} \norm{ \partial_{x_3}^k \hat{w}(\xi,\cdot)}_{L^\infty} < \infty \text{ for all } k\in \mathbb{N}.
\end{equation}
These bounds, the definition of $\Lambda$, and the dominated convergence theorem imply that the Fourier synthesis of these solutions given by \eqref{g_m_s_1}--\eqref{g_m_s_2} is also a solution that is smooth when restricted to $\Omega_\pm$.  The Fourier synthesis is real-valued because $f(\abs{\xi})$ is real-valued and radial and because of the equivariance in $\xi$ given in Theorem \ref{w_soln_2}.  This equivariance in $\xi$ also implies the equivariance of $\eta, v, q$ written in \eqref{g_m_s_00}.    

The bound \eqref{g_m_s_0} follows by applying Lemma \ref{sobolev_bounds}  with arbitrary $k \ge 0$ and utilizing the fact that $f$ is compactly supported.  The compact support of $f$ also implies that $\lambda_0(f)>0$, so that $\lambda_0(f) \le \lambda(\abs{\xi}) \le \Lambda$ for $\abs{\xi} \in \supp(f)$.  This then yields the bounds \eqref{g_m_s_3}.
\end{proof}

\section{Growth of solutions to the linearized problem}

\subsection{Preliminary estimates}

In this section we will prove estimates for the growth in time of arbitrary solutions to \eqref{linearized_1}--\eqref{linearized_2} in terms of the largest growing mode: $\Lambda$ in the non-periodic case and  $\Lambda_L$ in the periodic case, defined by \eqref{max_def} and \eqref{max_def_per} respectively.  To this end, we suppose that $\eta,v,q$ are real-valued solutions to \eqref{linearized_1}--\eqref{linearized_2} along with the corresponding jump and boundary conditions (of course, by linearity, we may also handle complex solutions by taking the real and complex parts and proceeding with an analysis of each part).  

It will be convenient to work with a second-order formulation of the equations.  To arrive at this, we differentiate the third equation in time and eliminate the $q$ and $\eta$ terms using the other equations.  This yields the equation
\begin{multline}\label{second_order}
 \rho_0 \partial_{tt} v - \nab(P'(\rho_0) \rho_0 \diverge{v}) + g \rho_0 \nab v_3 -  g \rho_0 \diverge{v} e_3 \\
= \diverge\left( \ep_0 \left( D\dt v + D \dt v^T - \frac{2}{3} (\diverge{\dt v}) I\right) + \delta_0 (\diverge{\dt v}) I    \right) 
\end{multline}
coupled to the jump conditions
\begin{equation}
 \jump{\dt v} =0
\end{equation}
and
\begin{equation}
\jump{ ( P'(\rho_0) \rho_0 \diverge{v} )   I  + \ep_0 (D \dt v + D \dt v^T) + (\delta_0-2\ep_0/3) \diverge{\dt v} I   } e_3  = -\sigma \Delta_{x_1,x_2} v_3 e_3 .
\end{equation}
The function $\dt v$ also satisfies $\dt v(x_1,x_2,-m,t) = \dt v(x_1,x_2,\ell,t) =0$ at the upper and lower boundaries. The initial data for $\dt v(0)$ is given in terms of the initial data $q(0), v(0),$ and $\eta(0)$ via the third linear equation, i.e. $\dt v(0)$ satisfies
\begin{multline}
  \rho_0 \dt v(0) = - g q(0) e_3 - g \rho_0 \nab \eta_3(0) \\
+\diverge\left( \ep_0 \left( D v(0) + D  v(0)^T - \frac{2}{3} (\diverge{ v(0)}) I\right) + \delta_0 (\diverge{ v(0)}) I    \right). 
\end{multline}

Our first result gives an energy and its evolution equation for solutions to the second-order problem.

\begin{lem}\label{lin_en_evolve}
Let $v$ solve \eqref{second_order} and the corresponding jump and boundary conditions.  Then in the non-periodic case, 
\begin{multline}
 \dt \int_\Omega \rho_0 \frac{\abs{\dt v}^2}{2} + \frac{P'(\rho_0) \rho_0}{2}\abs{ \diverge{v} - \frac{g}{P'(\rho_0)} v_3 }^2 + \int_\Omega \frac{\ep_0}{2} \abs{D \dt v + D \dt v^T - \frac{2}{3} (\diverge{\dt v})I  }^2 \\
+\int_\Omega \delta_0 \abs{\diverge{\dt v}}^2
= \dt \int_{\Rn{2}} \frac{g \jump{\rho_0}}{2} \abs{v_3}^2 - \frac{\sigma}{2}\abs{\nab_{x_1,x_2} v_3}^2.
\end{multline}
In the periodic case, the same equation holds with the integral over $\Rn{2}$ replaced with an integral over $(2\pi L \mathbb{T})^2$.
\end{lem}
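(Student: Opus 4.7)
The natural approach is to test the second-order equation \eqref{second_order} against $\dt v$ in $L^2(\Omega)$ and integrate by parts, being careful that $\rho_0$, $\diverge v$, and the viscous stress all jump at $\{x_3=0\}$, so every integration by parts in $x_3$ must be carried out separately on $\Omega_+$ and $\Omega_-$ and will produce a contribution on $\{x_3=0\}$. The no-slip boundary conditions at $x_3=-m,\ell$ kill the outer boundary terms; the interior jump terms are what drive the energy identity.

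The term $\rho_0 \partial_{tt}v \cdot \dt v$ yields $\dt(\rho_0 \abs{\dt v}^2/2)$ directly. For the viscous term on the right-hand side, integration by parts and the algebraic identity between a symmetric, traceless-plus-trace stress and the symmetrized gradient produces the bulk dissipation $\frac{\ep_0}{2}\abs{D\dt v + D\dt v^T - \frac{2}{3}(\diverge \dt v) I}^2 + \delta_0 \abs{\diverge \dt v}^2$, plus the interfacial contribution $-\int_{\{x_3=0\}} \jump{S_{visc}\,e_3} \cdot \dt v$, where I use $\jump{\dt v}=0$ to justify combining the two faces. The pressure term $-\nab(P'(\rho_0)\rho_0 \diverge v)\cdot \dt v$ integrates piecewise to $P'(\rho_0)\rho_0 \diverge v \diverge \dt v$ in the bulk plus $\jump{P'(\rho_0)\rho_0 \diverge v}\dt v_3$ on $\{x_3=0\}$.

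The crucial step is handling the two gravity terms. Writing $\dt v \cdot \nab v_3 = \nab\cdot(v_3 \dt v) - v_3 \diverge \dt v$ and integrating the divergence piecewise produces bulk factor $-\rho_0' v_3 \dt v_3$ and an interfacial factor $-\jump{\rho_0} v_3 \dt v_3$ on $\{x_3=0\}$. I would then invoke the steady-state ODE \eqref{steady_state}, $P'(\rho_0)\rho_0' = -g \rho_0$, to rewrite $-g\rho_0' = g^2 \rho_0/P'(\rho_0)$. Collecting the bulk terms, $P'(\rho_0)\rho_0\diverge v\, \diverge \dt v - g\rho_0 v_3 \diverge \dt v - g \rho_0 \dt v_3 \diverge v + \frac{g^2\rho_0}{P'(\rho_0)} v_3 \dt v_3$ is exactly $\dt\bigl[\frac{P'(\rho_0)\rho_0}{2}\abs{\diverge v - \frac{g}{P'(\rho_0)} v_3}^2\bigr]$, and the interfacial term $-g\jump{\rho_0} v_3 \dt v_3$ integrates as $-\dt(\frac{g\jump{\rho_0}}{2} v_3^2)$ on $\{x_3=0\}$.

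To close the identity I would use the linearized stress jump condition, which states that $\jump{P'(\rho_0)\rho_0 \diverge v}e_3 + \jump{S_{visc}\,e_3} = -\sigma \Delta_{x_1,x_2} v_3 \,e_3$. Dotting with $\dt v$ and moving to the other side, the two leftover interfacial terms combine into $\int_{\{x_3=0\}} \sigma \Delta_{x_1,x_2} v_3 \dt v_3$. A horizontal integration by parts on $\Rn{2}$ (or $(2\pi L \mathbb{T})^2$ in the periodic case), which produces no boundary contribution in either setting, converts this to $-\dt(\sigma \abs{\nab_{x_1,x_2} v_3}^2/2)$, giving the surface tension term on the right-hand side of the claimed identity. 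The main obstacle is strictly bookkeeping — keeping track of jump contributions from three distinct piecewise integrations by parts and verifying that the steady-state ODE is precisely what is needed to complete the square in the pressure/gravity bulk energy.
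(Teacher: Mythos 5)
Your proposal is correct and follows essentially the same route as the paper: test \eqref{second_order} with $\dt v$, integrate by parts separately on $\Omega_\pm$, use the steady-state ODE \eqref{steady_state} to complete the square into $\frac{P'(\rho_0)\rho_0}{2}\abs{\diverge v - \frac{g}{P'(\rho_0)}v_3}^2$, and handle the interfacial terms via $\jump{\dt v}=0$ and the stress jump condition, with a final horizontal integration by parts giving the surface tension term. The only cosmetic difference is your explicit decomposition $\dt v\cdot\nab v_3 = \nab\cdot(v_3\dt v) - v_3\diverge\dt v$, which is the same integration by parts the paper performs implicitly.
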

\begin{proof}
We will prove the result in the non-periodic case.  The periodic case follows similarly.  Recall that $\Omega_+ = \Rn{2} \times(0,\ell)$.  Take the dot product of \eqref{second_order} with $\dt v(t)$ and integrate over $\Omega_+$.  After integrating by parts and utilizing \eqref{steady_state}, we get
\begin{multline}
 \int_{\Omega_+}  \rho_0 \dt v \cdot \partial_{tt}v + P'(\rho_0)\rho_0 (\diverge{v}) (\diverge{\dt v}) -g\rho_0(v_3 \diverge{\dt v} + \dt v_3 \diverge{v}) + \frac{g^2 \rho_0}{P'(\rho_0)} v_3 \dt v_3
\\
+ \int_{\Omega_+} \frac{\ep_0}{2} \abs{D \dt v + D \dt v^T - \frac{2}{3} (\diverge{\dt v})I  }^2 
 + \int_{\Omega_+} \delta_0 \abs{\diverge{\dt v}}^2 \\
= \int_{\Rn{2}} g \rho^+_0  v_3 \dt v_3 
- \int_{\Rn{2}}  P_+'(\rho^+_0)\rho^+_0  \diverge{v} \dt v_3
- \int_{\Rn{2}} T e_3 \cdot \dt v
\end{multline}
where we have written 
\begin{equation}
 T = ( P'(\rho_0) \rho_0 \diverge{v} )   I  + \ep_0 \left(D \dt v + D \dt v^T -\frac{2}{3} (\diverge{\dt v})I \right) + \delta_0 \diverge{\dt v} I.
\end{equation}
We may pull time derivatives out of the first integrals on each side of the equation to arrive at the equality
\begin{multline}
 \dt \int_{\Omega_+} \rho_0 \frac{\abs{\dt v}^2}{2} + \frac{P'(\rho_0) \rho_0}{2} \abs{\diverge{v} - \frac{g}{P'(\rho_0)} v_3 }^2 + \int_{\Omega_+} \frac{\ep_0}{2} \abs{D \dt v + D \dt v^T - \frac{2}{3} (\diverge{\dt v})I  }^2 \\
 + \int_{\Omega_+} \delta_0 \abs{\diverge{\dt v}}^2 
= \dt \int_{\Rn{2}} g \rho_0^+ \frac{\abs{v_3}^2}{2} 
- \int_{\Rn{2}} T e_3 \cdot \dt v.
\end{multline}
A similar result holds on $\Omega_-= \Rn{2} \times(-m,0)$ with the opposite sign on the right hand side.  Adding the two together yields
\begin{multline}
 \dt \int_{\Omega} \rho_0 \frac{\abs{\dt v}^2}{2} + \frac{P'(\rho_0) \rho_0}{2} \abs{\diverge{v} - \frac{g}{P'(\rho_0)} v_3 }^2 + \int_{\Omega} \frac{\ep_0}{2} \abs{D \dt v + D \dt v^T - \frac{2}{3} (\diverge{\dt v})I  }^2 \\
+ \int_\Omega \delta_0 \abs{\diverge{\dt v}}^2
= \dt \int_{\Rn{2}} g \jump{\rho_0} \frac{\abs{v_3}^2}{2} 
- \int_{\Rn{2}} \jump{ T e_3  \cdot \dt v}.
\end{multline}
Using the jump conditions, we find that
\begin{multline}
- \int_{\Rn{2}} \jump{ T e_3  \cdot \dt v}
= \int_{\Rn{2}} \sigma \Delta_{x_1,x_2} v_3  \dt v_3\\
 = - \sigma \int_{\Rn{2}} \nab_{x_1,x_2} v_3 \cdot \nab_{x_1,x_2} \dt v_3 = - \dt \int_{\Rn{2}} \frac{\sigma}{2} \abs{\nab_{x_1,x_2} v_3 }^2.
\end{multline}
The result follows by plugging this in above.
\end{proof}


The next result allows us to estimate the energy in terms of $\Lambda$, which was given by \eqref{max_def}.

\begin{lem}\label{lin_en_bound}
Let $v\in H^1(\Omega)$ be so that $v(x_1,x_2,-m)=v(x_1,x_2,\ell)=0$.  In the non-periodic case we have the inequality
\begin{multline}
\int_{\Rn{2}} \frac{g \jump{\rho_0}}{2} \abs{v_3}^2 - \frac{\sigma}{2}\abs{\nab_{x_1,x_2} v_3}^2 - \int_\Omega \frac{P'(\rho_0)\rho_0}{2}\abs{\diverge{v} - \frac{g}{P'(\rho_0)}v_3 }^2 \\
\le \frac{\Lambda^2}{2}\int_\Omega \rho_0 \abs{v}^2
+ \frac{\Lambda}{2} \int_\Omega \frac{\ep_0}{2} \abs{D v + Dv^T - \frac{2}{3}(\diverge{v})I  }^2 + \delta_0 \abs{\diverge{ v}}^2 . 
\end{multline}
In the periodic case, if $\sqrt{\sigma / (g\jump{\rho_0})}< L$, then the same inequality holds with the $\Rn{2}$ integral replaced with an integral over $(2\pi L \mathbb{T})^2$ and $\Lambda$ replaced with $\Lambda_L$.  
 \end{lem}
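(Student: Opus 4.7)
The strategy is to apply Plancherel in the horizontal variables and reduce the inequality to a pointwise estimate in the spatial frequency $\xi$, which can then be attacked using the variational characterization of the growing-mode rate from Theorem~\ref{inversion}. Concretely, I take the horizontal Fourier transform \eqref{hft} of $v$, and for each fixed $\xi\neq 0$ apply a rotation $R_\xi\in SO(2)$ with $R_\xi\xi = (\abs{\xi},0)$ to the horizontal part $(\hat v_1,\hat v_2)(\xi,\cdot)$, producing rotated fiber modes $\tilde v_1,\tilde v_2$. Since the Frobenius norm squared of the deviatoric tensor $Dv+Dv^T-\tfrac{2}{3}(\diverge{v})I$ and the divergence are invariant under such a horizontal rotation, every quadratic integral in the inequality becomes, after Parseval, a fiber-wise integral in $x_3$ against the rotated modes at effective Fourier frequency $(\abs{\xi},0)$. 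I then identify $\varphi(\xi,x_3) = i\tilde v_1$, $\theta(\xi,x_3) = i\tilde v_2$, and $\psi(\xi,x_3) = \hat v_3(\xi,x_3)$ so that the rotated $\xi$-fiber plays exactly the role of the profile variables in \eqref{w_1_equation}--\eqref{w_3_equation}.

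The next step is to match each term in the inequality with a piece of the variational functionals $E_0,E_1,J$ from \eqref{e_l_1} and \eqref{J_def}. A direct calculation gives $\widehat{\diverge{v}}(\xi,x_3) = \abs{\xi}\varphi + \psi'$, and expanding the squared norm of the deviatoric stress in rotated Fourier variables yields
\begin{equation*}
\int_{-m}^\ell \frac{\ep_0}{2}\bigl|Dv+Dv^T-\tfrac{2}{3}(\diverge{v})I\bigr|^2 + \delta_0\abs{\diverge{v}}^2 \,dx_3 = 2E_1(\varphi,\psi) + \int_{-m}^\ell \ep_0\bigl(\abs{\theta'}^2 + \abs{\xi}^2\abs{\theta}^2\bigr)\, dx_3,
\end{equation*}
with the $\theta$-integral on the right nonnegative. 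On the other hand, the completion-of-square identity used in Lemma~\ref{non_zero_origin}, together with the steady-state ODE \eqref{steady_state}, recognizes the LHS of the desired inequality as $-E_0(\varphi,\psi)$ per frequency. Discarding the nonnegative $\theta$-contribution from the right-hand side, the proof reduces to the pointwise-in-$\xi$ inequality
\begin{equation*}
E(\varphi,\psi;\Lambda) = E_0(\varphi,\psi) + \Lambda E_1(\varphi,\psi) \ge -\Lambda^2 J(\varphi,\psi).
\end{equation*}

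I would establish this pointwise bound in three cases. For $\xi=0$, the completion of squares collapses the LHS to $-\tfrac{1}{2}\int P'(\rho_0)\rho_0 \abs{\psi'}^2 \le 0$. For $0<\abs{\xi}<\abs{\xi}_c$, Theorem~\ref{inversion} gives $\mu(\lambda(\abs{\xi})) = -\lambda(\abs{\xi})^2$, so $E(\cdot\,;\lambda(\abs{\xi})) \ge -\lambda(\abs{\xi})^2 J$ on $H_0^1\times H_0^1$ by homogeneity of $E$ and $J$; since $E_1\ge 0$ (so $s\mapsto E(\cdot\,;s)$ is nondecreasing) and $\Lambda\ge \lambda(\abs{\xi})$ by the definition \eqref{max_def}, this upgrades to $E(\cdot\,;\Lambda)\ge -\Lambda^2 J$. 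For $\abs{\xi}\ge\abs{\xi}_c$ in the case $\sigma>0$, we have $\sigma\abs{\xi}^2\ge g\jump{\rho_0}$, and the closed-form expression for $E_0$ appearing in Lemma~\ref{non_zero_origin} is manifestly nonnegative, so $-E_0\le 0$ trivially. Integrating the pointwise inequality over $\xi\in\Rn{2}$ (or summing over $\xi\in (L^{-1}\mathbb{Z})^2$ in the periodic case, with $\Lambda$ replaced by $\Lambda_L$) and reinstating the dropped $\theta$-terms on the right-hand side completes the proof.

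The main obstacle is the algebraic step matching the Fourier-transformed viscous integrand with $2E_1(\varphi,\psi)$ plus a nonnegative $\theta$-remainder. It rests on the identity $\tfrac{8}{3}(a^2-ab+b^2) = \tfrac{2}{3}(a+b)^2 + 2(a-b)^2$, applied with $a=\abs{\xi}\varphi$ and $b=\psi'$, to recombine $\abs{M_{11}}^2+\abs{M_{22}}^2+\abs{M_{33}}^2$ for the deviatoric matrix $M$; because every coefficient is real, the identity transfers from squares to squared moduli without modification.
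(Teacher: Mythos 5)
Your proposal is correct and follows essentially the same route as the paper's proof: horizontal Fourier transform with Parseval, rotation of each fiber to frequency $(\abs{\xi},0)$, identification of the fiber quadratic forms with $E_0$, $E_1$, $J$, and then the variational facts $\mu(\lambda(\abs{\xi}))=-\lambda(\abs{\xi})^2$, monotonicity of $E(\cdot\,;s)$ in $s$, and $\lambda(\abs{\xi})\le\Lambda$ (with the regimes $\xi=0$ and $\abs{\xi}\ge\abs{\xi}_c$ handled trivially). You are in fact somewhat more explicit than the paper about the algebraic matching of $\tfrac{\ep_0}{2}\abs{\hat B}^2+\delta_0\abs{\widehat{\diverge v}}^2$ with $2E_1$ plus a nonnegative $\theta$-remainder and about the $\xi=0$ term in the periodic sum; just note, as the paper does, that one should split each fiber into real and imaginary parts before invoking the variational inequality, since $E$ and $J$ are defined on real-valued functions.
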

\begin{proof}
We will again prove only the non-periodic version.   Take the horizontal Fourier transform and apply \eqref{parseval} to see that 
\begin{multline}
4\pi^2 \int_{\Rn{2}} \frac{g \jump{\rho_0}}{2} \abs{v_3}^2 - \frac{\sigma}{2}\abs{\nab_{x_1,x_2} v_3}^2 - 4\pi^2 \int_\Omega \frac{P'(\rho_0)\rho_0}{2}\abs{ \diverge{v} - \frac{g}{P'(\rho_0)}v_3 }^2 \\
= \int_{\Rn{2}} \frac{g \jump{\rho_0} - \sigma \abs{\xi}^2}{2} \abs{\hat{v}_3}^2 -  \int_\Omega \frac{P'(\rho_0) \rho_0}{2}\abs{ i\xi_1 \hat{v}_1 + i\xi_2 \hat{v}_2 + \partial_3 \hat{v}_3 - \frac{g}{P'(\rho_0)}\hat{v}_3 }^2 d\xi dx_3\\
=  \int_{\Rn{2}} \left( \frac{g \jump{\rho_0} - \sigma \abs{\xi}^2 }{2} \abs{\hat{v}_3}^2- \int_{-m}^\ell \frac{P'(\rho_0) \rho_0}{2}\abs{ i\xi_1 \hat{v}_1 + i\xi_2 \hat{v}_2 + \partial_3 \hat{v}_3 - \frac{g}{P'(\rho_0)}\hat{v}_3 }^2 dx_3  \right)d\xi.
\end{multline}
Consider now the last integrand for fixed $\xi\neq 0$, writing $\varphi(x_3)= i \hat{v}_1(\xi,x_3)$, $\theta(x_3)= i \hat{v}_2(\xi,x_3)$, $\psi(x_3) = \hat{v}_3(\xi,x_3)$.  That is, define
\begin{equation}
Z(\varphi,\theta,\psi;\xi) = \frac{g \jump{\rho_0} - \sigma \abs{\xi}^2}{2} \abs{\psi}^2 - \int_{-m}^\ell \frac{P'(\rho_0)\rho_0}{2}\abs{ \xi_1 \varphi + \xi_2 \theta + \psi' - \frac{g}{P'(\rho_0)} \psi }^2 dx_3
\end{equation}
where $' = \partial_3$.  By splitting
\begin{equation}
 Z(\varphi,\theta,\psi;\xi) = Z(\Re \varphi,\Re \theta,\Re \psi;\xi) +Z(\Im \varphi,\Im \theta,\Im \psi;\xi) 
\end{equation}
we may reduce to bounding $Z$ when $\varphi,\theta,\psi$ are real-valued functions, and then apply the bound to the real and imaginary parts of $\varphi,\theta,\psi$. 

The expression for $Z$ is invariant under simultaneous rotations of $\xi$ and $(\varphi,\theta)$, so without loss of generality we may assume that $\xi = (\abs{\xi},0)$ with $\abs{\xi} > 0$ and $\theta =0$.   If $\sigma>0$ then we assume for now that $\abs{\xi} < \abs{\xi}_c$ as well.  Then, using \eqref{E_def} with $\tep = \lambda(\abs{\xi})\ep_0$ and $\td = \lambda(\abs{\xi}) \delta_0$, we may rewrite 
\begin{multline}
 Z(\varphi,\theta,\psi;\xi) = -E(\varphi,\psi;\lambda(\abs{\xi})).+ \frac{\lambda(\abs{\xi})}{2} \int_{-m}^\ell \delta_0 \abs{\psi'+\abs{\xi} \varphi}^2 \\
+ \frac{\lambda(\abs{\xi})}{2} \int_{-m}^\ell \ep_0 \left(  \abs{\varphi' - \abs{\xi} \psi}^2 + \abs{\psi' -\abs{\xi} \varphi}^2 + \frac{1}{3} \abs{\psi' + \abs{\xi} \varphi}^2\right)
\end{multline}
and hence
\begin{multline}
 Z(\varphi,\theta,\psi;\xi) \le \frac{\Lambda^2}{2} \int_{-m}^\ell \rho_0 (\abs{\varphi}^2  +\abs{\psi}^2) \\
+ \frac{\Lambda}{2} \int_{-m}^\ell \delta \abs{\psi'+\abs{\xi} \varphi}^2 
+ \frac{\Lambda}{2} \int_{-m}^\ell \ep_0 \left(  \abs{\varphi' - \abs{\xi} \psi}^2 + \abs{\psi' -\abs{\xi} \varphi}^2 + \frac{1}{3} \abs{\psi' + \abs{\xi} \varphi}^2\right)
\end{multline}
For $\abs{\xi} \ge \xi_c$ the expression for $Z$ is non-positive, so the previous inequality holds trivially, and so we deduce that it holds for all $\abs{\xi}>0$.

Translating the inequality back to the original notation for fixed $\xi$, we find
\begin{multline}
 \frac{g \jump{\rho_0} - \sigma \abs{\xi}^2}{2} \abs{\hat{v}_3}^2 - \int_{-m}^\ell \frac{P'(\rho_0) \rho_0}{2}\abs{ i\xi_1 \hat{v}_1 + i\xi_2 \hat{v}_2 + \partial_3 \hat{v}_3 - \frac{g}{P'(\rho_0)}\hat{v}_3 }^2 dx_3 \\
\le \frac{\Lambda^2}{2} \int_{-m}^\ell \rho_0 \abs{\hat{v}}^2 
+ \frac{\Lambda}{2} \int_{-m}^\ell \delta_0 \abs{i\xi_1 \hat{v}_1 + i\xi_2 \hat{v}_2 + \partial_3 \hat{v}_3 }^2 + \frac{\ep_0}{2} \abs{\hat{B}}^2,
\end{multline}
where 
\begin{equation}
 B = Dv + Dv^T - \frac{2}{3}(\diverge{v})I.
\end{equation}
Integrating each side of this inequality over all $\xi \in\Rn{2}$ and using \eqref{parseval} then proves the result.
\end{proof}

When $\sigma>0$ and  $L$ is sufficiently small, a better result is available in the periodic case. 

\begin{lem}\label{periodic_bound}
Let $v\in H^1(\Omega)$ be so that $v(x_1,x_2,-m)=v(x_1,x_2,\ell)=0$ and suppose in the periodic case that $L$ satisfies \eqref{L_small}.  Then
\begin{equation}\label{p_b_1}
\int_{(2\pi L\mathbb{T})^2} \frac{g \jump{\rho_0}}{2} \abs{v_3}^2 - \frac{\sigma}{2}\abs{\nab_{x_1,x_2} v_3}^2 - \int_\Omega \frac{P'(\rho_0)\rho_0}{2}\abs{\diverge{v} - \frac{g}{P'(\rho_0)}v_3 }^2 
\le 0.
\end{equation}

\end{lem}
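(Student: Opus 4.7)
The plan is to mimic the Fourier-series reduction used in the proof of Lemma \ref{lin_en_bound}: applying Parseval's identity on $(2\pi L \mathbb{T})^2$ rewrites the left-hand side of \eqref{p_b_1} (up to the constant $1/(4\pi^2 L^2)$) as a sum over $\xi \in (L^{-1}\mathbb{Z})^2$ of
\[
Z(\xi) := \frac{g\jump{\rho_0} - \sigma\abs{\xi}^2}{2}\abs{\hat v_3(\xi, 0)}^2 - \int_{-m}^\ell \frac{P'(\rho_0)\rho_0}{2}\abs{i\xi_1 \hat v_1 + i\xi_2 \hat v_2 + \partial_3 \hat v_3 - \frac{g}{P'(\rho_0)}\hat v_3}^2 dx_3.
\]
It therefore suffices to show $Z(\xi) \le 0$ for every $\xi \in (L^{-1}\mathbb{Z})^2$, with the cases $\xi \ne 0$ and $\xi = 0$ treated separately.

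For $\xi \ne 0$, the hypothesis \eqref{L_small} gives $\abs{\xi}^2 \ge L^{-2} \ge g\jump{\rho_0}/\sigma$, so the prefactor $g\jump{\rho_0} - \sigma\abs{\xi}^2$ is non-positive. The remaining integral enters with a minus sign and is manifestly non-negative, so $Z(\xi) \le 0$ without further work. This is precisely the place where the smallness of $L$ is used, and it is why no growing modes can be found from the constrained minimization in the periodic regime under \eqref{L_small}.

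For $\xi = 0$, write $\psi(x_3) = \hat v_3(0,x_3)$. The $\abs{\xi}^2$ contribution disappears and one must show
\[
\frac{g\jump{\rho_0}}{2}\abs{\psi(0)}^2 \le \int_{-m}^\ell \frac{P'(\rho_0)\rho_0}{2}\abs{\psi' - \frac{g}{P'(\rho_0)}\psi}^2 dx_3.
\]
I would expand the square on the right and use the steady-state identity $P'(\rho_0)\rho_0' = -g\rho_0$, exactly as in the computation leading to \eqref{n_z_o_1}, to rewrite the cross term $-2g\rho_0\psi\psi'$ via an integration by parts performed separately on $(-m,0)$ and $(0,\ell)$. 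Using $\psi(-m)=\psi(\ell)=0$, this cancels the $g^2\rho_0\psi^2/P'(\rho_0)$ term arising from the square and produces a boundary contribution of $-g\jump{\rho_0}\abs{\psi(0)}^2$, which exactly cancels the surface term $g\jump{\rho_0}\abs{\psi(0)}^2$ in $Z(0)$. What remains is $-\int_{-m}^\ell P'(\rho_0)\rho_0 \abs{\psi'}^2 \le 0$, finishing the case.

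The only real subtlety -- essentially bookkeeping -- is tracking the sign of the boundary terms coming from the jump of $\rho_0$ across $\{x_3=0\}$ in that integration by parts. Since the identical identity has already been worked out in the proof of Lemma \ref{non_zero_origin} (equation \eqref{n_z_o_1} with $\varphi = 0$), I would simply invoke it rather than redo the calculation.
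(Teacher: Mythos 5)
Your proposal is correct and matches the paper's proof step for step: both reduce via Parseval to a per-frequency inequality on $(L^{-1}\mathbb{Z})^{2}$, observe that under \eqref{L_small} every nonzero lattice mode already has non-positive prefactor $g\jump{\rho_0}-\sigma\abs{\xi}^{2}$, and handle the remaining $\xi=0$ mode by the same completion-of-the-square and integration-by-parts computation used for \eqref{n_z_o_1}. The only trivial discrepancy is that your final line drops the factor $\tfrac{1}{2}$ in $-\tfrac{1}{2}\int_{-m}^{\ell}P'(\rho_0)\rho_0\abs{\psi'}^{2}$, which does not affect the sign.
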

\begin{proof}
Apply the horizontal Fourier transform to see
\begin{multline}
4\pi^2L^2 \int_{(2\pi L\mathbb{T})^2} \frac{g \jump{\rho_0}}{2} \abs{v_3}^2 - \frac{\sigma}{2}\abs{\nab_{x_1,x_2} v_3}^2 - 4\pi^2L^2 \int_\Omega \frac{P'(\rho_0)\rho_0}{2}\abs{ \diverge{v} - \frac{g}{P'(\rho_0)}v_3 }^2 \\
=  \sum_{\xi \in (L^{-1}\mathbb{Z})^2}  \frac{g \jump{\rho_0} - \sigma \abs{\xi}^2 }{2} \abs{\hat{v}_3}^2 \\
- \sum_{\xi \in (L^{-1}\mathbb{Z})^2}   \int_{-m}^\ell \frac{P'(\rho_0) \rho_0}{2}\abs{ i\xi_1 \hat{v}_1 + i\xi_2 \hat{v}_2 + \partial_3 \hat{v}_3 - \frac{g}{P'(\rho_0)}\hat{v}_3 }^2 dx_3  .
\end{multline}
Because of \eqref{L_small}, the only $\xi \in (L^{-1}\mathbb{Z})^2$ for which $g \jump{\rho_0} - g \abs{\xi}^2 \ge 0$ is $\xi =0$.  Since all but the $\xi=0$ term on the right side of the last equation are non-positive, we reduce to showing that 
\begin{equation}
  \frac{g \jump{\rho_0} }{2} \abs{\hat{v}_3}^2 
- \int_{-m}^\ell \frac{P'(\rho_0) \rho_0}{2}\abs{ \partial_3 \hat{v}_3 - \frac{g}{P'(\rho_0)}\hat{v}_3 }^2 dx_3 \le 0.
\end{equation}
For this we expand the term in the integral and integrate by parts to get 
\begin{equation}
 \frac{g \jump{\rho_0} }{2} \abs{\hat{v}_3}^2 
- \int_{-m}^\ell \frac{P'(\rho_0) \rho_0}{2}\abs{ \partial_3 \hat{v}_3 - \frac{g}{P'(\rho_0)}\hat{v}_3 }^2 dx_3  = -\hal \int_{-m}^\ell P'(\rho_0) \rho_0 \abs{\partial_3 \hat{v}_3}^2, 
\end{equation}
which yields the desired inequality.

\end{proof}

\subsection{Proof of Theorems \ref{lin_growth_bound} and \ref{periodic_stability}}\label{linear_growth_section}

With the preliminary estimates in place, we can now prove bounds for the growth of arbitrary solutions to \eqref{second_order} in terms of $\Lambda$ and $\Lambda_L$.

\begin{proof}[Proof of Theorem \ref{lin_growth_bound}]
Again, we will only prove the non-periodic case.  Integrate the result of Lemma \ref{lin_en_evolve} in time from $0$ to $t$ to find that
\begin{multline}
  \int_\Omega \rho_0 \frac{\abs{\dt v(t)}^2}{2} + \int_{0}^{t}\int_\Omega \frac{\ep_0}{2} \abs{D \dt v(s) + D \dt v(s)^T - \frac{2}{3} (\diverge{\dt v(s)})I}^2 + \delta_0 \abs{\diverge{\dt v(s)}}^2 ds \\
\le K_0 + \int_{\Rn{2}} \frac{g \jump{\rho_0}}{2} \abs{v_3(t)}^2 - \frac{\sigma}{2}\abs{\nab_{x_1,x_2} v_3(t)}^2 - \int_\Omega \frac{P'(\rho_0) \rho_0}{2}\abs{ \diverge{v(t)} - \frac{g}{P'(\rho_0)}v_3(t) }^2,
\end{multline}
where 
\begin{equation}
 K_0 = \int_\Omega \rho_0 \frac{\abs{\dt v(0)}^2}{2} + \int_\Omega \frac{P'(\rho_0) \rho_0}{2}\abs{ \diverge{v(0)} - \frac{g}{P'(\rho_0)} v_3(0) }^2 + \int_{\Rn{2}} \frac{\sigma}{2}\abs{\nab_{x_1,x_2} v_3(0)}^2.
\end{equation}
We may then apply Lemma \ref{lin_en_bound} to get the inequality 
\begin{multline}
  \int_\Omega \rho_0 \frac{\abs{\dt v(t)}^2}{2} + \int_{0}^{t}\int_\Omega \frac{\ep_0}{2} \left( D \dt v(s) + D \dt v(s)^T - \frac{2}{3} (\diverge{\dt v(s)})I \right) + \delta_0 \abs{\diverge{\dt v(s)}}^2 ds\\
\le K_0 + \frac{\Lambda^2}{2}\int_\Omega \rho_0 \abs{v(t)}^2 \\
+ \frac{\Lambda}{2}\int_\Omega \frac{\ep_0}{2} \left( D \dt v(t) + D \dt v(t)^T - \frac{2}{3} (\diverge{\dt v(t)})I \right) + \delta_0 \abs{\diverge{v(t)}}^2.
\end{multline}
Using the definitions of the norms $\norm{\cdot}_1, \norm{\cdot}_2$ given in \eqref{norm_def}, we may compactly rewrite the previous inequality as
\begin{equation}\label{l_g_b_1}
 \hal \norm{\dt v(t)}^2_1  + \int_0^t \norm{\dt v(s)}^2_2 ds \le K_0 + \frac{\Lambda^2}{2}\norm{ v(t)}^2_1 + \frac{\Lambda}{2} \norm{v(t)}^2_2. 
\end{equation}

Integrating in time and using Cauchy's inequality,  we may bound
\begin{multline}\label{l_g_b_2}
 \Lambda \norm{v(t)}^2_2 = \Lambda \norm{v(0)}^2_2 +\Lambda \int_0^t 2 \langle v(s),\dt v(s) \rangle_2 ds\\
 \le \Lambda \norm{v(0)}^2_2 +  \int_0^t \norm{\dt v(s)}_2^2  ds  + \Lambda^2 \int_0^t \norm{v(s)}^2_2 ds .
\end{multline}
On the other hand
\begin{equation}
\Lambda \dt \norm{v(t)}^2_1 = \Lambda 2\langle \dt v(t), v(t) \rangle_1 \le \Lambda^2 \norm{v(t)}_1^2 + \norm{\dt v(t)}^2_1.
\end{equation}
We may combine these two inequalities with \eqref{l_g_b_1} to derive the differential inequality
\begin{equation}
 \dt   \norm{v(t)}^2_1 +  \norm{v(t)}^2_2 \le K_1 + 2\Lambda \norm{v(t)}^2_1 + 2\Lambda \int_0^t \norm{v(s)}^2_2 ds
\end{equation}
for $K_1 = 2K_0/\Lambda + 2\norm{v(0)}^2_2.$  An application of Gronwall then shows that
\begin{equation}\label{l_g_b_3}
 \norm{v(t)}^2_1 + \int_0^t \norm{v(s)}^2_2 ds \le e^{2\Lambda t}\norm{v(0)}^2_1 + \frac{K_1}{2\Lambda} (e^{2\Lambda t}-1)
\end{equation}
for all $t\ge 0$.  To derive the corresponding bound for $\norm{v(t)}^2_2$ and $\norm{\dt v(t)}_1^2$ we return to \eqref{l_g_b_1} and plug in \eqref{l_g_b_2} and \eqref{l_g_b_3} to see that
\begin{equation}
 \frac{1}{\Lambda} \norm{\dt v(t)}_1^2 +   \norm{v(t)}^2_2 \le K_1 + \Lambda \norm{v(t)}_1^2 +  2\Lambda \int_0^t \norm{v(s)}^2_2 ds \le  e^{2\Lambda t} \left( 2\Lambda \norm{v(0)}^2_1 + K_1 \right).
\end{equation}
The result follows by noting that 
\begin{equation}
 K_0 \le C \left( \norm{\dt v(0)}_1^2 + \norm{ v(0)}_1^2 + \norm{v(0)}_2^2  + \sigma  \int_{\Rn{2}} \abs{\nab_{x_1,x_2} v_3(0)}^2 \right) 
\end{equation}
for a constant $C>0$ depending on  $\rho_0^{\pm}, P_{\pm}, \Lambda, \ep_\pm, \delta_\pm, \sigma, g, m, \ell.$

\end{proof}

In the periodic case when $L$ satisfies \eqref{L_small} we may use Lemma \ref{periodic_bound} to improve the above result.

\begin{proof}[Proof of Theorem \ref{periodic_stability}]
We again integrate the result of Lemma \ref{lin_en_evolve} in time from $0$ to $t$ to find that
\begin{multline}
  \int_\Omega \rho_0 \frac{\abs{\dt v(t)}^2}{2} + \int_{0}^{t}\int_\Omega \frac{\ep_0}{2} \abs{D \dt v(s) + D \dt v(s)^T - \frac{2}{3} (\diverge{ \dt v(s)})I}^2 + \delta_0 \abs{\diverge{ \dt v(s)}}^2 ds \\
\le K_1 + \int_{(2\pi L \mathbb{T})^2} \frac{g \jump{\rho_0}}{2} \abs{v_3(t)}^2 - \frac{\sigma}{2}\abs{\nab_{x_1,x_2} v_3(t)}^2 - \int_\Omega \frac{P'(\rho_0) \rho_0}{2}\abs{\diverge{v(t)} - \frac{g}{P'(\rho_0)} v_3(t) }^2.
\end{multline}
We may apply Lemma \ref{periodic_bound} to see that all of the integrals on the right side of the previous inequality are non-positive, and hence
\begin{equation}
\hal \norm{\dt v(t) }_1^2 + \int_0^t \norm{\dt v(s)}_2^2 ds \le K_1,
\end{equation}
where the norms are defined by \eqref{norm_def}.  From this we deduce that
\begin{equation}
 \norm{v(t)}_1  + \norm{v(t)}_2 \le \norm{v(0)}_1 + \norm{v(0)}_2 + 3\sqrt{t}\sqrt{K_1}.
\end{equation}
Then, using that $\dt \eta = v$, we get
\begin{equation}
 \norm{\eta(t)}_1  + \norm{\eta(t)}_2 \le \norm{\eta(0)}_1 + \norm{\eta(0)}_2 + t\left(\norm{v(0)}_1 + \norm{v(0)}_2 \right) + 2t^{3/2} \sqrt{K_1}.
\end{equation}

To derive the estimates for $\dt^j v$ for $j\ge 2$ we apply $\dt^j$ to \eqref{second_order}.  Then $w = \dt^j v$ satisfies the same equation and boundary conditions as $v$, which allows us to argue as above to derive the inequality
\begin{equation}
\hal \norm{\dt^j v(t) }_1^2 + \int_0^t \norm{\dt^j v(s)}_2^2 ds \le K_j
\end{equation}
for  all $j\ge 1$.  This trivially implies \eqref{p_s_0}.  To get \eqref{p_s_00} we bound
\begin{multline}
 \norm{\dt^j v(t)}^2_2 \le  \norm{\dt^j v(0)}^2_2 + 2\int_0^t \norm{\dt^j v(s)}_2 \norm{\dt^{j+1}  v(s)}_2 ds \\
\le  \norm{\dt^j v(0)}^2_2 + 2\left(\int_0^t \norm{\dt^j v(s)}^2_2 ds\right)^{1/2}\left( \int_0^t \norm{\dt^{j+1}  v(s)}^2_2 ds\right)^{1/2} \\
\le \norm{\dt^j v(0)}^2_2 + 2 \sqrt{K_j}\sqrt{K_{j+1}}.
\end{multline}
\end{proof}


\end{document}